 \DeclareMathOperator{\ord}{ord}
\DeclareMathOperator{\lead}{lead}
\DeclareMathOperator{\val}{val}
\begin{document}

{\theoremstyle{plain}
  \newtheorem{theorem}{Theorem}[section]
  \newtheorem{corollary}[theorem]{Corollary}
  \newtheorem{proposition}[theorem]{Proposition}
  \newtheorem{lemma}[theorem]{Lemma}
  \newtheorem{question}[theorem]{Question}
  \newtheorem{conjecture}[theorem]{Conjecture}
  \newtheorem{claim}[theorem]{Claim}
}

{\theoremstyle{definition}
  \newtheorem{definition}[theorem]{Definition}
  \newtheorem{remark}[theorem]{Remark}
  \newtheorem{example}[theorem]{Example}
}

\numberwithin{equation}{section}
\def\QQ{\mathbb{Q}}
\def\ZZ{\mathbb{Z}}
\def\NN{\mathbb{N}}
\def\RR{\mathbb{R}}
\def\C{\mathbb{C}}

\def \a {\alpha}
\def \A {\mathcal{A}}
\def \b {\beta}
\def \B {\mathcal{B}} 
\def \d {\delta}
\def \e {\epsilon}
\def \l {\lambda}
\def \f {\phi}
\def \g {\gamma}
\def \G {\Gamma}
\def \H {\mathcal{H}}
\def \p {\pi}
\def \n {\nu}
\def \m {\mu}
\def \s {\sigma}
\def \S {\Sigma}
\def \w {\omega}
\def \F {\Phi}
\def \O {\mathcal{O}}
\def \W {\Omega}
\def \k {\bold{k}}
\def \bp {\bold{p}}
\def \U {\mathcal{U}}
\def \t {\tau}
\def\D {\mathcal{D}}
\def \P {\mathcal{P}}
\def \T {\mathcal{T}}
\def \R {\mathcal{R}}
\def \et {\eta}
\def \th {\theta}
\def \X {\mathcal{X}}

\def \ra {\rightarrow}

\title[Generating sequences in dim 3]{Constructing examples of semigroups of valuations}

\author{Olga Kashcheyeva}
\address{University of Illinois at Chicago, Department of Mathematics,
Statistics and Computer Science, 851 S. Morgan (m/c 249), Chicago,
IL 60607, USA} \email{kolga@uic.edu}

\begin{abstract} We work with rational rank 1 valuations centered in regular local rings. Given an algebraic function field $K$ of transcendence degree 3 over $k$, a regular local ring $R$  with $QF(R)=K$  and a $k$-valuation $\n$ of $K$, we provide an algorithm for constructing  a generating sequences for $\n$ in $R$. We then develop a method for determining a valuation $\n$ on $k(x,y,z)$ through the sequence of defining values. Using the above results we construct examples of valuations centered in $k[x,y,z]_{(x,y,z)}$ and investigate their semigroups of values.

\end{abstract}

\maketitle

\section{Introduction}

This paper is inspired by the following question: given a regular local noetherian domain $R$ and a valuation $\n$ of the field of fractions $QF(R)$ dominating $R$, what semigroups can appear as a value semigroup $\n(R)$. The answer is available when $R$ is of dimension 1 or 2, but little is known for higher dimensional regular local rings. 

The only semigroups which are realized by a valuation on a one dimensional regular
local ring are isomorphic to the semigroup of  natural numbers. The semigroups which are realized by a valuation on a regular local ring of dimension 2 with algebraically closed residue field are completely classified by Spivakovsky in \cite{Spi}. A different proof for power series ring in two variables over $\C$ is given by Favre and Jonsson in \cite{FJ}. In \cite{C&V}, Cutkosky and Vinh give a necessary and sufficient condition for a semigroup $S$ to be the semigroup of a valuation dominating a regular local ring R of dimension 2 with a prescribed residue field extension. In the context of semigroups under the assumption that the rational rank of $\nu$ is 1 the criterion is as follows, see \cite{C&T}, \cite{C-D-K}, Corollary  \ref{positivity1}, and \cite{C&V}. 

{\it Let $S$ be a well ordered subsemigroup of $\QQ_{\ge 0}$  with at most countable system of generators $\{\b_i\}_{i\ge 0}$ such that $\b_0<\b_1<\dots< \b_n<\dots$. For all $i\ge 0$ let $G_i=\sum_{j=0}^i\b_j\ZZ$ and $q_{i+1}=[G_{i+1}:G_i]=\min\{q\in\ZZ_{>0}|q\b_{i+1}\in G_i\}$.
 Then $S$ is the semigroup of a valuation $\n$ dominating a regular local ring $R$ of dimension 2 if and only if $\b_{i+1}>q_i\b_i$ for all $i\ge 1$}

In particular, it follows that an ordered minimal set of generators $\{\b_i\}_{i\ge 0}$ of the value semigroup of a valuation dominating a regular local ring of dimension 2 is sparse as $\b_{i+1}>2\b_i$ for all $i\ge 1$. This  property does not stand true for higher dimensional regular local rings as shown by example in \cite{C-D-K}.
 
When dimension of $R$ is $n$ the classical results, see \cite{ZS}, state that the value semigroup $\n(R)$ is isomorphic to a well ordered set contained in the nonnegative part of $(\R^h, <_{lex})$ and having an ordinal type of at most $\omega^h$. Here,  $\omega$ is the first infinite ordinal and $h$ is the rank of $\n$; $h$ is less than or equal to the rational rank of $\n$, which is less than or equal to $n$.  Additional bound on the growth of rank 1 valuation semigroups is found by Cutkosky in \cite{Cut}. It leads to a construction of a well ordered subsemigroup of $\QQ_{>0}$ of ordinal type $\omega$, which is not a value semigroup of a noetherian local domain. In \cite{C&T2}, Cutkosky and Teissier formulate bounds on the growth of the number of distinct valuation ideals of $R$ corresponding to values lying in certain parts of the value group of $\n$, thus extending to all ranks the bound given for rank 1 valuations in \cite{Cut}. They also provide some surprising examples of semigroups of rank greater than 1 that occur as semigroups of valuations on noetherian domains, see  \cite{C&T2} and \cite{C&T}. In \cite{Mogh2}, Moghaddam constructs a certain class of value semigroups with large rational rank.

In this paper we use the approach of generating sequences of valuations to investigate value semigroups  of valuations centered in 3-dimensional regular local rings. Let $(R, m_R)$ be a local ring and $K$ be its field of fractions. Let $\n$ be a valuation on $K$ with valuation ring $(V, m_V)$. Assume that $R\subset V$ and $m_R=m_V\cap R$. Let $\F_{R} = \nu(R \backslash \{0\})$ be the semigroup
consisting of the values of nonzero elements of $R$. For $\g \in
\F_R$, let $I_{\g}=\{f\in R \mid \ \n(f)\geq \gamma\}$ and $I^+_{\g}=\{f\in R \mid \ \n(f)> \gamma\}$. A
(possibly infinite) sequence $\{ Q_i \}$ of elements of $R$ is a
{\it generating sequence} of $\nu$  if for every $\g
\in \F_R$ the ideal $I_{\g}$ is generated by the set
$$\{\prod_{i}{Q_i}^{b_i}\mid \ b_i\in \ZZ_{\ge 0},\ 
\sum_{i} b_i \n(Q_i)\geq \gamma\}.$$
Notice that the set of values $\{\n(Q_i)\}$ generates $\F_R$ as a semigroup. Moreover,  the set of images of  $Q_i$ in the associated graded ring of valuation ${\text gr}_{\n}R=\bigoplus_{\g\in\F}I_{\g}/I^+_{\g}$ generate ${\text gr}_{\n}R$ as  $R/m_{R}$-algebra. The graded ring  ${\text gr}_{\n}R$ is of particular interest as it is a key tool  used by Teissier in \cite{T} and \cite{T2}  to solve the local uniformization problem. When the valuation is rational, that is $V/m_V=R/m_R$, the graded ring ${\text gr}_{\n}R$ is isomorphic to the semigroup algebra over $R/m_R$ of the value semigroup $\F_R$, it can be represented as the quotient of a polynomial algebra by a prime binomial ideal, (see \cite{T2} ).

In section \ref{construction} we provide an algorithm for constructing generating sequences of rational rank 1 valuations when $K$ is an algebraic function field of transcendence degree 3 over an algebraically closed field $k$ and $R$ is a regular local ring containing $k$. In the construction we denote the sequence  $\{P_i\}_{i\ge 0}\cup\{T_i\}_{i>0}$ and call it the sequence of jumping polynomials. We then show that $\{P_i\}_{i\ge 0}\cup\{T_i\}_{i>0}$ is a  generating sequence  of valuation in section \ref{properties1}. This construction extends the construction of generating sequences in two dimensional regular local rings used in \cite{GhK}. 

The algorithm is recursive and explicit equations for $P_{i+1}$  in terms of $\{P_j\}_{0\le j\le i}$ and for $T_{\bar{d}(i)}$ in terms of $\{P_j\}_{0\le j\le m_i}\cup\{T_j\}_{0<j\le i}$ are provided. These equations are binomial in nature with the value of the term on the left strictly greater than the value of each term on the right
\begin{align*}
P_{i+1} =P_i^{q_i}-\l_i\prod_{j=0}^{i-1}P_j^{n_{i,j}}\quad\quad\quad\quad\quad\quad\\
T_{\bar{d}} =
T_i^{c_is_i}\prod_{j=0}^{m_i}P_j^{a_j}\prod_{j=0}^{i-1}T_j^{c_j}-\m_{\bar{d}}
\prod_{j=0}^{m_i}P_j^{n_{\bar{d},j}}\prod_{j=0}^{i-1}T_j^{l_{\bar{d},j}}
\end{align*}
Here, $\bar{d}$ is an integer greater than or equal to $i+1$, $\l_i,\m_{\bar{d}}\in k\setminus\{0\}$, $n_{i,j}$,  $m_i$, $a_j$, $c_j$, $n_{\bar{d},j}$, $l_{\bar{d},j}$ are nonnegative integers and $q_i$, $c_i$, $s_i$ are positive integers determined by the algorithm. In the given set up the associated graded ring of valuation ${\text gr}_{\n}R$ is the quotient of a polynomial algebra in infinitely many variables $k[\{P_i\}_i,\{T_i\}_i]$ by the binomial ideal $(\{P_i^{q_i}-\l_i\prod_{j=0}^{i-1}P_j^{n_{i,j}}\}_i,\{T_i^{c_is_i}\prod_{j=0}^{m_i}P_j^{a_j}\prod_{j=0}^{i-1}T_j^{c_j}-\m_{\bar{d}}
\prod_{j=0}^{m_i}P_j^{n_{\bar{d},j}}\prod_{j=0}^{i-1}T_j^{l_{\bar{d},j}}\}_{\bar{d}})$. 

In order to construct examples of semigroups of valuations we work with polynomial rings in three variables $k[x,y,z]$ over an arbitrary base field $k$. We use the approach of extending the trivial valuation of $k$ to a valuation of $k(x,y,z)$ through the sequence of augmented valuations determined by a sequence of defining polynomials as we call them in the construction of section \ref{numerical data}. The technique of sequences of augmented valuations and  key polynomials was first introduced by MacLane in \cite{MacL} in order to describe all possible extensions of a discrete rank one valuation $\m$ of a field $L$ to the field $L(\xi)$.  In \cite{Vaq}, Vaqui$\acute{\text{e}}$ generalized MacLane's axiomatic method to produce all extensions of an arbitrary valuation of a field $L$ to a pseudo-valuation of $L(\xi)$. A different, more constructive,  approach to  describe and generalize key polynomials of MacLane was taken by Herrera Govantes, Olalla Acosta, Mahboub and Spivakovsky in \cite{Sp+} and \cite{Sp+2}, see also \cite{Mah}. The construction of section \ref{numerical data} is most closely related to the construction of key polynomials used in \cite{FJ} by Favre  and Jonsson in order to describe $\C$-valuations on $\C(x,y)$. We note that we do not apply the terminology of key polynomials and augmented valuations when working with defining polynomials.

 The sequence of defining polynomials $\{P_i\}_{i\ge 0}\cup\{Q_i\}_{i>0}$ constructed in section \ref{numerical data} is contained in the ring $k[x,y,z]$. These polynomials are completely determined by the following numerical input:
\begin{itemize}
\item[--] sequence of positive rational numbers $\{\b_i\}_{i\ge 0}$ such that $\b_{i+1}>q_i\b_i$ 
\item[--] sequence of positive rational numbers $\{\bar{\g}_i\}_{i>0}$ such that $\bar{\g}_{i+1}>\bar{r}_{i,0}\b_0+\bar{s}_i\bar{\g}_i$ 
\item[--] sequences of nonzero scalars $\{\l_i\}_{i>0}$ and ${\{\bar{\m}_i\}}_{i>0}$ in $k$
\end{itemize}
Here, $\b_i$ is the prescribed value for $P_i$, $\bar{\g}_i$ is the prescribed value for $Q_i$, $q_i=\min\{q\in\ZZ_{>0}|q\b_i\in\sum_{j=0}^{i-1}\b_j\ZZ\}$, $\bar{s}_i=\min\{s\in\ZZ_{>0}|s\bar{\g}_i\in(\sum_{j=0}^{\infty}\b_j\ZZ+\sum_{j=1}^{i-1}\bar{\g}_j\ZZ)\}$ and $\bar{r}_{i,0}$ is a nonnegative integer  described in the construction of section \ref{numerical data}. Explicit recursive equations for $P_{i+1}$  in terms of $\{P_j\}_{0\le j\le i}$ and for $Q_{i+1}$ in terms of $\{P_j\}_{j\ge 0}\cup\{Q_j\}_{0<j\le i}$ are provided. They are binomial equations
\begin{align*} 
&P_{i+1} =P_i^{q_i}-\l_i\prod_{j=0}^{i-1}P_j^{n_{i,j}}\\
&Q_{i+1} =x^{\bar{r}_{i,0}}Q_i^{\bar{s}_i}-\bar{\m}_i\prod_{j=0}^{\bar{m}_i}
P_j^{\bar{n}_{i,j}}\prod_{j=1}^{i-1}Q_j^{\bar{l}_{i,j}}
\end{align*}
Here, $\bar{m}_i$ and $n_{i,j},\bar{n}_{i,j},\bar{l}_{i,j}$ are nonnegative integers determined by the construction of section \ref{numerical data}.

It is shown in section \ref{properties2} that provided infinitely many $q_i$ and $\bar{s}_i$ are greater than 1 the numerical data above uniquely determines a valuation on $k(x,y,z)$. In particular, $(P_0,\b_0)$ determines a discrete valuation on $k(x)$. Polynomials $\{(P_i, \b_i)\}_{i>0}$ determine the extension of the discrete valuation of $k(x)$ to $k(x,y)$ and $\{(Q_i,\bar{\g}_i)\}_{i>0}$ determine the extension of the valuation of $k(x,y)$ to $k(x,y,z)$. Polynomials $\{P_i\}_{i>0}$ are monic polynomials in $k(x)[y]$, see Proposition \ref{P-degrees}. Polynomials $\{Q_i,\}_{i>0}$ are not in general monic polynomials in $k(x,y)[z]$, see Proposition \ref{Q-degrees-intermediate} and Corollary \ref{Q-degrees}.  We note that in MacLane's construction key polynomials are monic polynomials in $L[\xi]$, however there is no restriction on the lower degree terms of key polynomials except that the coefficients are elements of $L$. In our construction coefficients of $P_i$ are elements of the ring $k[x]$ and coefficients of $Q_i$ are elements of the ring $k[x,y]$.

 In sections \ref{main_example} and \ref{examples}  we provide examples of semigroups of valuations centered in $k[x,y,z]_{(x,y,z)}$. We use defining polynomials to construct a valuation and then work with the sequence of jumping polynomials to describe its value semigroup. One of the examples shows that when the set $\{\bar{r}_{i,0}|\bar{r}_{i,0}>0\}$ is empty the generators of the value semigroup are the values of defining polynomials $\{\b_i\}_{i\ge 0}\cup\{\bar{\g}\}_{i>0}$. In our main example, section \ref{main_example},  only one member of the sequence $\{\bar{r}_{i,0}\}_{i\ge 0}$ is greater than 0.
Finally, in the last example we set $\bar{r}_{1,0}$  and $\bar{r}_{2,0}$ greater than 0. Already in the case of just two $\bar{r}_{i,0}$ greater than zero the pattern for the sequence of generators of the value semigroup becomes quite complicated. 


\section{Construction of jumping polynomials}\label{construction}

We assume that $k$ is an algebraically closed field and $K$ is an
algebraic function field of transcendence degree 3 over $k$.
$\n$ is a $k$-valuation of $K$ with valuation ring $(V,m_V)$ and
value group $\G$. We assume that $\n$ is of rational rank 1 and
dimension 0, so  that $\G$ is a subgroup of $\QQ$ and $V/m_V=k$.
$(R, m_R)$ is a local subring of $K$ with $k\subset R$
and $R_{(0)}=K$. We assume that $R$ is a regular ring with regular parameters $x,y,z$. 
We also assume that $R$ is dominated by $\n$, that is $R\subset V$ and $R\cap m_V=m_R$.  

We use the following notations. Let $a_1,a_2,\dots,a_k$ be nonnegative integers and $a$ be a positive integer. If
$M=x_1^{a_1}x_2^{a_2}\cdots x_k^{a_k}$ is  a monomial in
$x_1,x_2,\dots,x_k$ and $\X$ is a set of monomials in (infinitely many) variables $x_1,x_2,\dots,x_k,x_{k+1},\dots$ then $M$ is said to be irreducible with respect to $\X$ if $x_1^{b_1}x_2^{b_2}\cdots x_k^{b_k}\notin\X$ for all
$b_1, b_2,\dots, b_k\in\ZZ$ such that $0\le b_i\le a_i$. If $\F$ is a semigroup with a fixed set of generators $\{\m_1,\m_2,\dots,\m_k\}$ and
$\m\in(\F+(-\F))$ we say that $(a_1,a_2,\dots,a_k,a)$ is reduced with
respect to $(\F,\{\m_1,\dots,\m_k\},\m)$, or $(\F,\m)$ for short, if  $a_1\m_1+a_2\m_2+\dots+a_k\m_k+a\m\in\F$ and
$b_1\m_1+b_2\m_2+\dots+b_k\m_k+b\m\notin\F$ for all
$b_1, b_2,\dots, b_k,b\in\ZZ$ such that $0<b\le a$, $0\le b_i\le a_i$ and
$(\sum_{i=1}^k b_i+b)<(\sum_{i=1}^k a_i+a)$.

To construct a generating sequence of $\n$ in $R$ we define
a sequence of jumping polynomials $\{P_i\}_{i\ge 0}\cup\{T_i\}_{i>0}$ in $R$.  

Let $P_0= x,\; P_1=y$ and  $\P_0=\P_1=\emptyset$. For all $i\ge 0$
we set $\b_i=\n(P_i)$ and
$$
\begin{array}{rl}
G_i&=\sum_{j=0}^{i}\b_j\ZZ \\
S_i&=\sum_{j=0}^{i}\b_j\ZZ_{\ge 0} .\\
\end{array}$$
For all $i>0$ let $q_i=\min\{q\in\ZZ_{>0}\,|\,q\b_i\in G_{i-1}\}$.
Set $\P_{i+1}=\P_i\cup\{P_i^{q_i}\}$. Let
$n_{i,0},n_{i,1},\dots,n_{i,i-1}\in\ZZ_{\ge 0}$ be such that
$q_i\b_i=\sum_{j=0}^{i-1}n_{i,j}\b_j$ and $\prod_{j=0}^{i-1}P_j^{n_{i,j}}$
is irreducible with respect to $\P_i$. Denote by $\l_i$ the residue of
$P_i^{q_i}/\prod_{j=0}^{i-1}P_j^{n_{i,j}}$ in $V/m_V$ and set
$$
P_{i+1}=P_i^{q_i}-\l_i\prod_{j=0}^{i-1}P_j^{n_{i,j}}.
$$
Finally set $G=\bigcup_{i=0}^\infty G_i$,  $S=\bigcup_{i=0}^\infty S_i$
and $\P=\bigcup_{i=0}^\infty\P_i$.

\begin{remark}
The infinite sequence $\{P_i\}_{i\ge 0}$ is well defined due to the
following

\begin{itemize}
\item[1.] $P_i\neq 0$ and $\b_i<\infty$ for all $i>0$. (See Proposition \ref{P-degrees1})

\item[2.] $\b_{i+1}>q_i\b_i$ for all $i>0$.

\item[3.] $\prod_{j=0}^{i-1}P_j^{n_{i,j}}$ is irreducible with respect to $\P_i$
if and only if $n_{i,j}<q_j$ for all $j>0$.

\item[4.] $n_{i,0},n_{i,1},\dots,n_{i,i-1}\in\ZZ_{\ge 0}$ is a unique $i$-tuple
of integers satisfying the conditions $q_i\b_i=\sum_{j=0}^{i-1}n_{i,j}\b_j$
and $n_{i,j}<q_j$ for all $j>0$.
(See Lemma \ref{group representation} and Corollary \ref{positivity1}.)

\item[5.] $\l_i\in k\backslash\{0\}$ for all $i>0$.
\end{itemize}
\end{remark}

Let $T_1=z$, $m_0=0$, $\d_0=1$, $\T_1=\P$, $H_0=\{0\}$ and
$U_0=\{0\}$. For all $i>0$ such that $T_i\neq 0$ we set  $\g_i=\n(T_i)$ and
$$
\begin{array}{rl}
H_i&=\sum_{j=1}^{i}\g_j\ZZ \\
U_i&=\sum_{j=1}^{i}\g_j\ZZ_{\ge 0} .\\
\end{array}
$$
For all $i>0$ such that $T_i\neq 0$ let
$$
\begin{array}{rl}
s_i &=\min\{s\in\ZZ_{>0}\,\mid s\g_i\in (H_{i-1}+G)\}\\
m_i &=\max(m_{i-1}, \min\{j\in \ZZ_{\ge 0}\,\mid s_i\g_i\in(H_{i-1}+G_j)\}).\\
\end{array}
$$
For all $i>0$ such that $T_i\neq 0$ consider the following sets of $(m_i+1+i)$-tuples
of nonnegative integers
$$\begin{array}{rl}
\D_i =\{&\bar{d}=(a_0,a_1,\dots,a_{m_i},c_1,\dots,c_{i-1},c_i)\,\mid
\bar{d}\in(\ZZ_{\ge 0})^{m_i+1+i}, c_i>0,\\
 & \bar{d} \text{ is reduced with
respect to } ((S_{m_i}+U_{i-1}),s_i\g_i),\\
&\prod_{j=0}^{m_i}P_j^{a_j} \prod_{j=1}^{i-1}T_j^{c_j}
\text{ is irreducible with respect to }  \T_i\}.
\end{array}
$$
Set $\d_i=\#\D_i$ and $\T_{i+1}=\T_i\cup\{T_i^{c_is_i}\prod_{j=0}^{m_i}P_j^{a_j}
\prod_{j=1}^{i-1}T_j^{c_j}\mid \bar{d}\in\D_i\}$. If $\bar{d}\in\D_i$ denote
$|\bar{d}|=\sum_{j=0}^{m_{i}}a_j\b_j+\sum_{j=0}^{i-1}c_j\g_j+c_is_i\g_i$. We
assume that $\D_i=\{\bar{d}_1,\bar{d}_2,\dots,\bar{d}_{\d_i}\}$ is an ordered set
with $|\bar{d}_1|\le|\bar{d}_2|\le\dots\le|\bar{d}_{\d_i}|$. If
$\bar{d}=\bar{d}_k$ for some $1\le k\le\d_i$  let
$n_{\bar{d},0},n_{\bar{d},1},\dots,n_{\bar{d},m_i},
l_{\bar{d},1},l_{\bar{d},2},\dots,l_{\bar{d},i-1}\in\ZZ_{\ge 0}$ be such that
$|\bar{d}|=\sum_{j=0}^{m_i}n_{\bar{d},j}\b_j+\sum_{j=1}^{i-1}l_{\bar{d},j}\g_j$
and $\prod_{j=0}^{m_i}P_j^{n_{\bar{d},j}} \prod_{j=1}^{i-1}T_j^{l_{\bar{d},j}} $
is irreducible with respect to $\T_i$. Denote by $\m_{\bar{d}}$ the
residue of $(T_i^{c_is_i}\prod_{j=0}^{m_i}P_j^{a_j}\prod_{j=0}^{i-1}T_j^{c_j})/
(\prod_{j=0}^{m_i}P_j^{n_{\bar{d},j}}\prod_{j=0}^{i-1}T_j^{l_{\bar{d},j}})$
in $V/m_V$ and set
$$
T_{(\sum_{j=0}^{i-1}\d_j)+k}=T_{\bar{d}}=
T_i^{c_is_i}\prod_{j=0}^{m_i}P_j^{a_j}\prod_{j=0}^{i-1}T_j^{c_j}-\m_{\bar{d}}
\prod_{j=0}^{m_i}P_j^{n_{\bar{d},j}}\prod_{j=0}^{i-1}T_j^{l_{\bar{d},j}}.
$$
For all $i>0$ such that $T_i=0$ set $\g_i=0$, $H_i=H_{i-1}$, $U_i=U_{i-1}$, $s_i=1$, $m_i=m_{i-1}$, $\D_i=\emptyset$, $\d_i=0$ and $\T_{i+1}=\T_i\cup\{T_i\}$. Finally set $H=\bigcup_{i=0}^\infty H_i$,  $U=\bigcup_{i=0}^\infty U_i$
and $\T=\bigcup_{i=0}^\infty\T_i$.

\begin{remark}
The sequence $\{T_i\}_{i\ge 1}$ is well defined due to the
following
\begin{itemize}
\item[1.] $\d_i<\infty$ for all $i>0$. (See Lemma \ref{set-bound}.)

\item[2.] If $i,k>0$ then $\prod_{j=0}^kP_j^{a_j} \prod_{j=1}^{i-1}T_j^{c_j}$ is irreducible with respect to $\T$ if and only if it is irreducible with respect to $\T_i$.

\item[3.] If $\bar{d}\in \D_i$ for some $i>0$ then $n_{\bar{d},0},n_{\bar{d},1},
\dots,n_{\bar{d},m_i},l_{\bar{d},1},l_{\bar{d},2},\dots,l_{\bar{d},i-1}\in\ZZ_{\ge 0}$
is a unique $(m_i+i)$-tuple of integers satisfying the conditions
$|\bar{d}|=\sum_{j=0}^{m_i}n_{\bar{d},j}\b_j+\sum_{j=1}^{i-1}l_{\bar{d},j}\g_j$
and $\prod_{j=0}^{m_i}P_j^{n_{\bar{d},j}} \prod_{j=1}^{i-1}T_j^{l_{\bar{d},j}} $
is irreducible with respect to $\T$.
(See Proposition \ref{uniqueness}.)

\item[4.] $\m_{\bar{d}}\in k\backslash\{0\}$ for all $\bar{d}\in\D_i$ and all $i>0$.

\item[5.] It may happen that $T_{\bar{d}}=0$ for some $i$ and $\bar{d}\in\D_i$. (See example \ref{Example2})
\end{itemize}
\end{remark}

For $i,j>0$ we say that $T_j$ is an immediate successor of $T_i$ and $T_i$ is an immediate predecessor of $T_j$ if $T_j=T_{\bar{d}}$ for some $\bar{d}\in\D_i$. We say that $T_j$ is a successor of $T_i$ if there exists  $k\in\ZZ_{>0}$ and a sequence of integers $l_0,l_1,\dots,l_k$ such that $l_0=i$, $l_k=j$ and  $T_{l_t}$ is an immediate successor of $T_{l_{t-1}}$ for all $1\le t\le k$. Notice that if $T_k\neq 0$ is an immediate successor of $T_i$ then $\g_k> s_i\g_i$.

We say that $T_i$ is redundant if $T_i=0$ or the following conditions are satisfied: $\n(T_i)\in (S_{m_i}+U_{i-1})$, $T_i$ has a finite number of nonzero successors and the only immediate successor $T_{\sum_{j=0}^i\d_j}$ of $T_i$ is redundant.



\section{Preliminary results}\label{preliminaries}

In this section we prove several arithmetical statements needed to justify the construction of  jumping polynomials in section \ref{construction} as well as construction of defining polynomials in section \ref{numerical data}. So we assume that $\{\b'_i\}_{i\ge 0}$ and $\{\g'_i\}_{i>0}$ are sequences of rational numbers but  not necessarily values of jumping polynomials. As above we set $m'_0=0$, $G'_0=\b'_0\ZZ$, $H'_0=\{0\}$ and for all $i> 0$ we set 
\begin{align*}
S'_i=\sum_{j=0}^i\b'_j\ZZ_{\ge 0},\quad G'_i=\sum_{j=0}^i \b'_j\ZZ\quad&\text{ and }\quad S'=\bigcup_{j=0}^{\infty}S'_j,\quad G'=\bigcup_{j=0}^{infty}G'_j\\
U'_i=\sum_{j=1}^i \g'_j\ZZ_{\ge 0},\quad H'_i=\sum_{j=1}^i \g'_j\ZZ\quad &\text{ and }\quad U'=\bigcup_{j=1}^{\infty}U'_j,\quad H'=\bigcup_{j=1}^{\infty}H'_j\\
q'_i=\min\{q\in\ZZ_{>0}\mid q\b'_i\in G'_{i-1}\},\quad\quad &\; s'_i=\min\{s\in\ZZ_{>0}\mid s\g'_i\in(H'_{i-1}+G')\}\\
 \text {and }\quad m'_i =\max(m'_{i-1}, \min & \{j\in\ZZ_{\ge 0}\mid  s'_i\g'_i\in(H'_{i-1}+G'_j)\})
\end{align*}  

We notice that if $i>0$ and $k\ge m'_i$ then $G'_i=\frac{1}{q'_i}G'_{i-1}$ and $(G'_k+H'_i)=\frac{1}{s'_i}(G'_k+H'_{i-1})$ so that $(G'_k+H'_i)=(\prod_{j=1}^i\frac{1}{s'_j})G'_k=(\prod_{j=1}^k\frac{1}{q'_j}\prod_{j=1}^{i}\frac{1}{s'_j})G'_0$. Therefore, if $i\ge0$ and $k>m'_i$ then $(G'_k+H'_i)=\frac{1}{q'_k}(G'_{k-1}+H'_i)$.

\begin{lemma}\label{group representation}
Suppose that $\a\in(G'_k+H'_i)$ for some $i,k\ge 0$. Then there exist $n_0\in\ZZ$ and  a unique $(m+i)$-tuple of nonnegative integers $(n_1,\dots,n_m,l_1,\dots,l_i)$
such that  $m=max(k,m'_i)$, $\a=\sum_{j=0}^m n_j\b'_j+\sum_{j=1}^i l_j\g'_j$ 
and $n_j<q'_j$, $l_j<s'_j$ for all $j>0$.
\end{lemma}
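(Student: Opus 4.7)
The plan is to prove existence and uniqueness separately, each by induction from the top index downward, exploiting the two identities $(G'_k+H'_i)=\frac{1}{s'_i}(G'_k+H'_{i-1})$ for $k\ge m'_i$ and $G'_k+H'_i=\frac{1}{q'_k}(G'_{k-1}+H'_i)$ for $k>m'_i$ recorded just above the lemma. Set $m=\max(k,m'_i)$; since $G'_k\subset G'_m$, the element $\a$ already lies in $G'_m+H'_i$, so from the start I may seek the representation in the form $\sum_{j=0}^m n_j\b'_j+\sum_{j=1}^i l_j\g'_j$.

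For existence, I would induct on $i$. When $i=0$, I write $\a\in G'_m$ as an arbitrary integer combination of $\b'_0,\ldots,\b'_m$ and reduce the top coefficient $a_m$ modulo $q'_m$, absorbing the quotient into $G'_{m-1}$ via $q'_m\b'_m\in G'_{m-1}$, then recursing on $m$. For the inductive step $i\ge 1$, split $\a=g+h$ with $g\in G'_m$ and $h=b_i\g'_i+h'$, $h'\in H'_{i-1}$. Reducing $b_i$ modulo $s'_i$ and absorbing the excess $s'_i\lfloor b_i/s'_i\rfloor\g'_i$ into $H'_{i-1}+G'_{m'_i}\subset H'_{i-1}+G'_m$ (valid by the definition of $m'_i$) leaves a term $l_i\g'_i$ with $0\le l_i<s'_i$ together with an element of $G'_m+H'_{i-1}$, to which the inductive hypothesis applies at level $i-1$ with $k=m$. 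Here I use that $\max(m,m'_{i-1})=m$, which follows from $m\ge m'_i\ge m'_{i-1}$.

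For uniqueness, subtract two representations to obtain $0=\sum_{j=0}^m\tilde n_j\b'_j+\sum_{j=1}^i\tilde l_j\g'_j$ with $|\tilde n_j|<q'_j$ and $|\tilde l_j|<s'_j$ for $j\ge 1$. Isolating the top term gives $\tilde l_i\g'_i\in G'_m+H'_{i-1}\subset G'+H'_{i-1}$; if $\tilde l_i\ne 0$, then $|\tilde l_i|$ is a positive integer strictly less than $s'_i$ with $|\tilde l_i|\g'_i\in H'_{i-1}+G'$, contradicting the minimality of $s'_i$. Hence $\tilde l_i=0$, and the same minimality argument applied in turn to $\g'_{i-1},\ldots,\g'_1$ forces every $\tilde l_j=0$. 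What remains is $0=\sum_{j=0}^m\tilde n_j\b'_j$, and the analogous top-down argument using the minimality of $q'_m,q'_{m-1},\ldots,q'_1$ shows $\tilde n_j=0$ for all $j\ge 1$. This pins down the tuple $(n_1,\ldots,n_m,l_1,\ldots,l_i)$; the scalar $n_0$ is then whatever integer balances the equation and is not claimed to be unique.

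The main technical bookkeeping is to verify that after each $\g'_i$-reduction the leftover term genuinely lies in $G'_m+H'_{i-1}$ with the same $m$, so that the inductive hypothesis can be invoked without enlarging the $\b'$-index set. This rests entirely on the two inequalities $m\ge m'_i$ (by definition of $m$) and $m'_i\ge m'_{i-1}$ (built into the $\max$ in the recursive definition of $m'_i$). Once that is set up, both halves of the argument reduce to an essentially algorithmic top-down elimination.
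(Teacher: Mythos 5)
Your proof is correct and rests on the same mechanism as the paper's: top-down elimination of the generators $\g'_i,\dots,\g'_1,\b'_m,\dots,\b'_1$ using $q'_j\b'_j\in G'_{j-1}$, $s'_j\g'_j\in H'_{j-1}+G'_{m'_j}$, and the inequalities $m\ge m'_i\ge m'_{i-1}$. The paper packages existence and uniqueness into a single induction on $(i,k)$ via the disjoint coset decompositions of $(G'_k+H'_i)$ over $(G'_{k-1}+H'_i)$ (resp.\ over $(G'_k+H'_{i-1})$), whereas you treat them separately; but the disjointness there is exactly your minimality argument, so the two proofs are essentially the same.
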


\begin{proof}
We first observe that if $i$ is fixed and $k\le m'_i$ then $\a\in(G'_k+H'_i)$ implies
$\a\in(G'_{m_i}+H'_i)$ whereas the conclusion of the lemma does not depend on $k$.
Thus, it is enough to show that the statement holds for all $i\ge 0$ and $k\ge m'_i$.

We use induction on $(i,k)$. If $i=0$ and $k=m'_0=0$ then the statement is trivial.
Assume that $i$ is fixed and $k>m'_i$. Then since $(G'_k+H'_i)=\frac{1}{q'_k}(G'_{k-1}+H'_i)$ and $\b'_k$ generates $(G'_k+H'_i)$ over $(G'_{k-1}+H'_i)$ we can write
$$
(G'_k+H'_i)=(G'_{k-1}+H'_i)\cup(\b'_k+G'_{k-1}+H'_i)\cup\dots\cup((q'_k-1)\b'_k+G'_{k-1}+H'_i),
$$
where the union on the right is a disjoint union of $(G'_{k-1}+H'_i)$-sets. There
exists a unique nonnegative integer $n_k<q'_k$ such that $\a\in(n_k\b'_k+G'_{k-1}+H'_i)$. Using the inductive hypothesis for $(\a-n_k\b'_k)\in(G'_{k-1}+H'_i)$ we find  $n_0\in\ZZ$ and a $(k-1+i)$-tuple of nonnegative integers $(n_1,\dots,n_{k-1},l_1,\dots,l_i)$ such that $\a=\sum_{j=0}^k n_j\b'_j+\sum_{j=1}^i l_j\g'_j$ and $n_j<q'_j$, $l_j<s'_j$for all $j>0$.

To check that such representation is unique assume that $\a=\sum_{j=0}^k n'_j\b'_j+\sum_{j=1}^i l'_j\g'_j$ is another representation satisfying the requirements of the lemma. Then $0\le n'_k<q'_k$ and $(\a-n'_k\b'_k)\in(G'_{k-1}+H'_i)$.
On the other hand $n_k$ is the unique integer such that $0\le n_k<q'_k$ and
$\a\in(n_k\b'_k+G'_{k-1}+H'_i)$, thus, $n'_k=n_k$. Then by the inductive
hypothesis applied to $\sum_{j=0}^{k-1}n_j\b'_j+\sum_{j=1}^i l_j\g'_j=\sum_{j=0}^{k-1}n'_j\b'_j+\sum_{j=1}^i l'_j\g'_j$ we have $n'_0=n_0$ and  $n'_j=n_j$, $l'_j=l_j$ for all $j>0$.

Assume now that $i>0$ and $k=m'_i$. Then  since $(G'_k+H'_i)=\frac{1}{s'_i}(G'_k+H'_{i-1})$ and $\g'_i$ generates $(G'_k+H'_i)$ over $(G'_k+H'_{i-1})$ we can write
$$
(G'_k+H'_i)=(G'_k+H'_{i-1})\cup(\g'_i+G'_k+H'_{i-1})\cup\dots\cup((s'_i-1)\g'_i+G'_k+H'_{i-1}),
$$
where the union on the right is a disjoint union of $(G'_k+H'_{i-1})$-sets. Let $l_i<s'_i$
be a nonnegative integer such that $\a\in(l_i\g'_i+G'_k+H'_{i-1})$. Using the inductive
hypothesis for $(\a-l_i\g'_i)\in(G'_k+H'_{i-1})$ we find  $n_0\in\ZZ$
and a $(k+i-1)$-tuple of nonnegative integers $(n_1,\dots,n_k,l_1,\dots,l_{i-1})$
such that $\a=\sum_{j=0}^k n_j\b'_j+\sum_{j=1}^i l_j\g'_j$ and $n_j<q'_j$, $l_j<s'_j$
for all $j>0$. We deduce that such representation is unique using an argument similar to the one above.
\end{proof}

The next statement is a straight forward consequence of Lemma \ref{group representation}. 

\begin{corollary}\label{order1}
Suppose that $(a_0,\dots,a_k,c_1,\dots,c_i)$, $(b_0,\dots,b_k,d_1,\dots,d_i)$ are $(k+1+i)$-tuples of nonnegative integers such that $a_j,b_j<q'_j$ and $c_j,d_j<s'_j$ for all $j>0$. 

If $(a_0,\dots,a_k,c_1,\dots,c_i)\neq(b_0,\dots,b_k,d_1,\dots,d_i)$ then $\sum_{j=0}^k a_j\b'_j+\sum_{j=1}^ic_j\g'_j\neq\sum_{j=0}^k b_j\b'_j+\sum_{j=1}^i d_j\g'_j$.
\end{corollary}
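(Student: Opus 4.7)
The plan is to derive the corollary directly from the uniqueness part of Lemma \ref{group representation} by arguing the contrapositive. First I would assume that the two sums agree, i.e.\ $\sum_{j=0}^k a_j\b'_j+\sum_{j=1}^i c_j\g'_j=\sum_{j=0}^k b_j\b'_j+\sum_{j=1}^i d_j\g'_j$, and call this common value $\a$. Since all the coefficients are nonnegative integers, $\a$ lies in $S'_k+U'_i\subseteq G'_k+H'_i$, so the lemma is applicable to $\a$.

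Next I would set $m=\max(k,m'_i)$ in order to quote the lemma in exactly the form it was proved. If $k\ge m'_i$ this is just $m=k$ and the two tuples already have the correct length. Otherwise I would pad each tuple with zero coefficients $a_{k+1}=\dots=a_m=0$ and $b_{k+1}=\dots=b_m=0$; since $0<q'_j$ for every $j$, the padded zeros trivially meet the bound $n_j<q'_j$, so both extended tuples satisfy the hypotheses $n_j<q'_j$ (for $j>0$) and $l_j<s'_j$ imposed by the uniqueness clause of Lemma \ref{group representation}.

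Finally, Lemma \ref{group representation} asserts that the expression $\a=n_0\b'_0+\sum_{j=1}^m n_j\b'_j+\sum_{j=1}^i l_j\g'_j$ with $n_0\in\ZZ$, $0\le n_j<q'_j$ and $0\le l_j<s'_j$ is unique. Applying this to the two representations of $\a$ forces $a_j=b_j$ for all $0\le j\le k$ and $c_j=d_j$ for all $1\le j\le i$, contradicting the assumption that the two input tuples differ. There is no real obstacle in the argument; the only minor subtlety is the zero-padding when $k<m'_i$, which is needed so that the representations live in the regime where the lemma guarantees uniqueness.
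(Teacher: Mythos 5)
Your argument is correct and is exactly the intended one: the paper gives no separate proof, calling the corollary a ``straight forward consequence'' of Lemma \ref{group representation}, and your reduction---equate the two sums to a common $\a\in S'_k+U'_i\subseteq G'_k+H'_i$, pad with zeros up to $m=\max(k,m'_i)$, and invoke the uniqueness clause---is precisely that consequence. The only detail worth flagging is the index $j=0$: the lemma's uniqueness statement is phrased for the tuple $(n_1,\dots,n_m,l_1,\dots,l_i)$, so concluding $a_0=b_0$ additionally uses that $n_0$ is determined by $\a$ and the rest of the tuple (which holds since $\b'_0\neq 0$ in every context where the corollary is applied, and is what the lemma's inductive proof actually establishes).
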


We find a sufficient condition for positivity of $n_0$ by putting additional assumptions on the sequence of rational numbers $\{\b'_i\}_{i\ge 0}$.

\begin{corollary}\label{positivity1}
Suppose that the sequence $\{\b'_{i}\}_{i\ge 0}$ satisfies $\b'_{i+1}>q'_i\b'_i$ for all $i>0$ and $\b'_0>0$. Suppose that $k>0$ and $\a\in G'_k$ is such that $\a\ge q'_k\b'_k$. 

Let $\a=\sum_{j=0}^kn_j\b'_j$ be the  representation given by  Lemma \ref{group representation} then $n_0>0$.
\end{corollary}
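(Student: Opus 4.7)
My plan is to argue by contradiction: suppose $n_0 \le 0$ and derive a violation of the growth hypothesis $\b'_{j+1} > q'_j \b'_j$.

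Starting from $\a = n_0 \b'_0 + \sum_{j=1}^k n_j \b'_j$ and using $n_0 \le 0$ together with $\b'_0 > 0$, I would extract the inequality
\[
\sum_{j=1}^k n_j \b'_j \;=\; \a - n_0 \b'_0 \;\ge\; \a \;\ge\; q'_k \b'_k .
\]
Separating off the top-index term and using $0 \le n_k \le q'_k - 1$ then gives
\[
\sum_{j=1}^{k-1} n_j \b'_j \;\ge\; q'_k \b'_k - (q'_k-1)\b'_k \;=\; \b'_k .
\]

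The heart of the proof is an auxiliary claim, proved by a short induction on $k$: for any nonnegative integers $n_1,\dots,n_{k-1}$ with $n_j < q'_j$, one has $\sum_{j=1}^{k-1} n_j \b'_j < \b'_k$. The base case is the empty sum $0 < \b'_1$, and the inductive step combines the induction hypothesis with $n_k \b'_k \le (q'_k - 1)\b'_k$ to obtain
\[
\sum_{j=1}^{k} n_j \b'_j \;<\; \b'_k + (q'_k - 1)\b'_k \;=\; q'_k \b'_k \;<\; \b'_{k+1},
\]
the last step being the standing hypothesis. This claim directly contradicts the inequality $\sum_{j=1}^{k-1} n_j \b'_j \ge \b'_k$ derived above, so $n_0 > 0$ as required.

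The only delicate point I foresee is positivity of the individual $\b'_j$ for $j \ge 1$, which is needed both to make the terms $n_j \b'_j$ nonnegative and to seed the auxiliary induction at $\b'_1 > 0$. This is not explicitly among the hypotheses of the corollary, but it is automatic in every context in which the corollary is used: either one propagates $\b'_1 > 0$ forward through $\b'_{j+1} > q'_j \b'_j \ge \b'_j$, or else $\b'_j$ is the value of a nonzero element under $\n$ in the later sections and is therefore positive on the nose. Once positivity is in hand, no other obstacles arise; the entire content of the corollary is the assertion that the ``top digit'' $n_k$ cannot by itself absorb the full weight $q'_k \b'_k$ of $\a$, forcing the $\b'_0$-coefficient to pick up the slack.
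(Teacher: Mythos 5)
Your proof is correct and is essentially the paper's argument in contrapositive form: the auxiliary claim $\sum_{j=1}^{k-1} n_j\b'_j < \b'_k$ (equivalently $\sum_{j=1}^{k} n_j\b'_j < q'_k\b'_k$), proved by the same induction using $n_j\le q'_j-1$ and $q'_j\b'_j<\b'_{j+1}$, is exactly the telescoping chain of inequalities in the paper, after which both arguments conclude $n_0\b'_0>0$. Your remark about the implicit positivity of $\b'_j$ for $j\ge 1$ is apt — the paper's chain relies on it just as silently, and it holds in every context where the corollary is invoked.
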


\begin{proof} 
Since $q'_j\b'_j<\b_{j+1}$ and $(n_j+1)\le q'_j$ for all $j>0$ we have
$$
\sum_{j=1}^k n_j\b'_j<q'_1\b'_1+\sum_{j=2}^k n_j\b'_j<q'_2\b_2+\sum_{j=3}^k n_j\b'_j
<\dots<q'_{k-1}\b'_{k-1}+n_k\b'_k<q'_k\b'_k.
$$
If $\a\ge q'_k\b'_k$ this implies that $n_0\b'_0\ge q'_k\b'_k-\sum_{j=1}^k n_j\b'_j>0$. Thus $n_0>0.$
\end{proof}


\section{Properties of jumping polynomials}\label{properties1}

In this section we show that the sequence of jumping polynomials
$\{P_i\}_{i\ge 0}\cup\{T_i\}_{i>0}$ as constructed above is well defined
and is a generating sequence of $\n$ in $R$. 

We use notation of section \ref{construction}. If $A$ is a commutative ring and $h\in A$ we define $h^0=1$, in particular,  $h^0=1$ when $h=0$. If $f\in A[y]\setminus\{0\}$ then $\deg_y f$ and $\lead_y f$ denote the degree and leading coefficient  of $f$ as a polynomial in $y$ with coefficients in $A$. 

\begin{proposition}\label{P-degrees1} Suppose that $A=k[x]$ and $i\ge 1$. Then $P_i$ is a monic polynomial in $A[y]$ and $\deg_y P_i=\prod_{j=1}^{i-1}q_j$. In particular, $P_i\neq 0$ for all $i>0$.
\end{proposition}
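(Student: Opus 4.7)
The plan is to prove the proposition by induction on $i$, using the recursive defining equation
\[
P_{i+1} = P_i^{q_i} - \lambda_i \prod_{j=0}^{i-1} P_j^{n_{i,j}}
\]
and the bound $n_{i,j} < q_j$ for $j>0$ provided by item 3 of the Remark following the construction.

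The base case $i=1$ is immediate: $P_1 = y$ is monic in $y$ with $\deg_y P_1 = 1$, matching the empty product $\prod_{j=1}^{0} q_j = 1$. For the inductive step, assume the statement for all indices $1,\dots,i$, so $P_j \in A[y]$ is monic with $\deg_y P_j = \prod_{l=1}^{j-1} q_l$ for $1 \le j \le i$; also recall $P_0 = x \in A$, which contributes nothing to the $y$-degree. Then $P_i^{q_i} \in A[y]$ is monic with
\[
\deg_y P_i^{q_i} = q_i \prod_{l=1}^{i-1} q_l = \prod_{l=1}^{i} q_l.
\]
It remains to check that the second term $\prod_{j=0}^{i-1} P_j^{n_{i,j}}$ has strictly smaller $y$-degree, so it cannot cancel the leading term.

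The $y$-degree of this product equals $\sum_{j=1}^{i-1} n_{i,j} \prod_{l=1}^{j-1} q_l$. Using $n_{i,j} \le q_j - 1$ (valid for all $j>0$, since $n_{i,j}=0$ when $q_j=1$), each summand is bounded by $(q_j-1)\prod_{l=1}^{j-1}q_l = \prod_{l=1}^{j}q_l - \prod_{l=1}^{j-1}q_l$, and the sum telescopes:
\[
\sum_{j=1}^{i-1} n_{i,j} \prod_{l=1}^{j-1} q_l \;\le\; \sum_{j=1}^{i-1}\Bigl(\prod_{l=1}^{j} q_l - \prod_{l=1}^{j-1}q_l\Bigr) \;=\; \prod_{l=1}^{i-1} q_l - 1 \;<\; \prod_{l=1}^{i} q_l.
\]
Hence $\prod_{j=0}^{i-1} P_j^{n_{i,j}}$ has strictly smaller $y$-degree than $P_i^{q_i}$, so $P_{i+1} \in A[y]$ is monic in $y$ with $\deg_y P_{i+1} = \prod_{l=1}^{i} q_l$. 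In particular $P_{i+1} \neq 0$, which closes the induction.

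The argument is essentially a routine induction; the only mild subtlety is the telescoping bound, which is where the constraint $n_{i,j} < q_j$ from the construction is essential. One caveat worth flagging: item 3 of the Remark is invoked to guarantee $n_{i,j}<q_j$, but that bound on its own is a purely arithmetic statement about the unique representation in Lemma \ref{group representation}, so it does not create a circularity with the present proposition (which asserts the algebraic fact $P_i \neq 0$ used to make $\beta_i=\nu(P_i)$ well-defined).
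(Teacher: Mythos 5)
Your proof is correct and follows essentially the same route as the paper's: induction on $i$, with the key step being that the subtracted term $\prod_{j=0}^{i-1}P_j^{n_{i,j}}$ has $y$-degree $\sum_{j\ge 1} n_{i,j}\prod_{l<j}q_l$, bounded via $n_{i,j}<q_j$ and the telescoping sum by $\prod_{l=1}^{i-1}q_l-1$, hence strictly below $\deg_y P_i^{q_i}$. Your remark on non-circularity (the bound $n_{i,j}<q_j$ being purely arithmetic, via Lemma \ref{group representation}) is a correct and worthwhile observation, but the argument itself matches the paper's.
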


\begin{proof} 
We use induction on $i$. For $i=1$ we have $P_1=y$ is monic of degree 1. Let $i>1$ then 
$P_i=P_{i-1}^{q_{i-1}}-\l_{i-1}\prod_{j=0}^{i-2}P_j^{n_{i,j}}$. By the inductive hypothesis we have  
\[
\begin{split}
\deg_y P_{i-1}^{q_{i-1}} =\prod_{j=1}^{i-1} &q_j,\\
\deg_y (\prod_{j=0}^{i-2}P_j^{n_{i-1,j}}) & =n_{i-1,1}+n_{i-1,2}q_1+\dots+n_{i-1,i-2}q_1\cdots q_{i-3}\\
& <q_1+(q_2-1)q_1+\dots+(q_{i-2}-1)q_1\cdots q_{i-3}=q_1\cdots q_{i-2}.
\end{split}
\]
Thus $\deg_y P_i=\prod_{j=1}^{i-1} q_j$ and $\lead_y P_i=\lead_y P_{i-1}^{q_{i-1}}=1$.
\end{proof}

The next two statements show that the subsequence of jumping polynomials $\{T_i\}_{i>0}$ is well defined.

\begin{lemma}\label{set-bound}
If $i>0$ then $\d_i<\infty$.
\end{lemma}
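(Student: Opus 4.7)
The plan is to interpret $\D_i$ as a subset of the integer lattice $\ZZ_{\ge 0}^{m_i+1+i}$ and observe that the reducedness clause in its definition forces $\D_i$ to be an antichain under the componentwise partial order. Dickson's lemma then immediately implies that $\D_i$ is finite.

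To verify the antichain property I would take two tuples $\bar{d}=(a_0,\dots,a_{m_i},c_1,\dots,c_i)$ and $\bar{d}'=(a_0',\dots,a_{m_i}',c_1',\dots,c_i')$ in $\D_i$ with $a_j\le a_j'$ for $0\le j\le m_i$ and $c_j\le c_j'$ for $1\le j\le i$, and show they must coincide. If not, the coordinate sum of $\bar{d}$ is strictly smaller than that of $\bar{d}'$. Membership of $\bar{d}$ in $\D_i$ forces $c_i\ge 1$, hence $0<c_i\le c_i'$, and forces $|\bar{d}|=\sum_{j=0}^{m_i}a_j\b_j+\sum_{j=1}^{i-1}c_j\g_j+c_is_i\g_i\in S_{m_i}+U_{i-1}$. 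Thus $\bar{d}$ satisfies every clause that the reducedness of $\bar{d}'$ with respect to $(S_{m_i}+U_{i-1},s_i\g_i)$ explicitly forbids: each coordinate sits between $0$ and the corresponding coordinate of $\bar{d}'$, the last coordinate is a positive integer no larger than $c_i'$, the total coordinate sum is strictly smaller, and the associated value lies in $S_{m_i}+U_{i-1}$. This contradicts $\bar{d}'\in\D_i$, so $\bar{d}=\bar{d}'$.

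Once $\D_i$ is known to be an antichain in $\ZZ_{\ge 0}^{m_i+1+i}$, Dickson's lemma---every antichain in $\ZZ_{\ge 0}^{N}$ is finite---delivers $\d_i=\#\D_i<\infty$.

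The only real obstacle is bookkeeping: one must line up the definition of ``reduced'' with the definition of ``componentwise smaller'' carefully, and in particular remember that reducedness explicitly requires the last coordinate to be a \emph{positive} integer, which is precisely what guarantees that $\bar{d}$ is a legitimate witness against the reducedness of $\bar{d}'$. Note that the irreducibility condition on $\prod_{j=0}^{m_i}P_j^{a_j}\prod_{j=1}^{i-1}T_j^{c_j}$ appearing in the definition of $\D_i$ is not used in this argument; the semigroup-theoretic reducedness clause alone does all the work, and imposing irreducibility can only cut $\D_i$ down further.
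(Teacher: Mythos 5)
Your proof is correct, but it takes a genuinely different route from the paper's. You observe that the reducedness clause alone forces $\D_i$ to be an antichain for the componentwise order on $\ZZ_{\ge 0}^{m_i+1+i}$ (any smaller comparable tuple $\bar{d}$ would itself be a witness against the reducedness of the larger one, since $\bar{d}\in\D_i$ guarantees $c_i>0$ and $|\bar{d}|\in S_{m_i}+U_{i-1}$), and then invoke Dickson's lemma. The paper instead bounds each coordinate explicitly: it first shows that certain tuples $(r_{i,0},0,\dots,0,1)$ and $(0,\dots,0,r_i)$ belong to $\D_i$, so that $x^{r_{i,0}}T_i^{s_i}$ and $T_i^{r_is_i}$ enter $\T_{i+1}$; the irreducibility clause then yields $a_j<q_j$ and $c_j<r_js_j$ for the middle coordinates, while reducedness yields $a_0\le r_{i,0}$ and $c_i\le r_i$, giving the explicit bound $\d_i\le(r_{i,0}+1)q_1\cdots q_{m_i}r_1s_1\cdots r_{i-1}s_{i-1}r_i$. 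Your argument is shorter and needs neither the irreducibility clause nor the nonemptiness of the sets $F_j$, $F_{j,0}$; what it does not deliver is the explicit bound and, more importantly, the quantities $r_i$ and $r_{i,0}$, which the paper fixes as standing notation immediately after this proof and uses again (for instance in the proof of Proposition \ref{uniqueness}). As a proof of Lemma \ref{set-bound} alone, yours is complete.
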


\begin{proof}
Fix $j>0$ and set
$$
\begin{array}{rl}
F_{j,0} &=\{r\in\ZZ_{\ge 0} \mid (r\b_0+s_j\g_j)\in(S_{m_j}+U_{j-1})\}\\
F_j &=\{r\in\ZZ_{>0} \mid rs_j\g_j\in(S_{m_j}+U_{j-1})\}\\
\end{array}
$$
$F_j$ and $F_{j,0}$ are nonempty sets since
$(G_{m_j}+H_{j-1})\cap(N,\infty)=(S_{m_j}+U_{j-1})\cap(N,\infty)$ when
$N\in\ZZ$ is big enough. Let $r_{j,0}=\min(F_{j,0})$ and $r_j=\min(F_j)$
then $(r_{j,0},0,\dots,0,1),(0,\dots,0,r_j)\in\D_j$ and therefore
$x^{r_{j,0}}T_j^{s_j},T_j^{r_js_j}\in\T_i$ for all $i>j$.

Let $i>0$ and $\bar{d}=(a_0,a_1,\dots,a_{m_i},c_1,\dots,c_{i-1},c_i)\in\D_i$.
Since $\prod_{j=0}^{m_i}P_j^{a_j}\prod_{j=0}^{i-1}T_j^{c_j}$ is irreducible
with respect to $\T_i$ we get that $a_j<q_j$ for all $j>0$ and $c_j<r_js_j$
for all $j<i$. Also $a_0\le r_{i,0}$ and $c_i\le r_i$ since $\bar{d}$ is reduced
with respect to $((S_{m_i}+U_{i-1}),s_i\g_i)$. Thus
$\d_i\le (r_{i,0}+1)q_1\cdots q_{m_i}r_1s_1\cdots r_{i-1}s_{i-1}r_i$.
\end{proof}

From now on for all $i>0$ we fix the notation for $r_i$ as introduced in the proof of
Proposition \ref{set-bound}
$$
r_i=\min\{r\in\ZZ_{>0} \mid rs_i\g_i\in(S_{m_i}+U_{i-1})\}.
$$

\begin{proposition}\label{uniqueness}
Suppose that $\a\in(S_k+U_i)$ for some $k,i\ge 0$. Then there exists a unique
\mbox{$(m+1+i)$}-tuple of nonnegative integers
$(n_0,\dots, n_m, l_1,\dots,l_i)$ such that $m=\max(m_i,k)$,
$\a=\sum_{j=0}^m n_j\b_j+\sum_{j=1}^i l_j\g_j$ and
$\prod_{j=0}^m P_j^{n_j}\prod_{j=1}^i T_j^{l_j}$ is irreducible with respect
to $\T$.
\end{proposition}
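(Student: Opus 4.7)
The plan is to split the proof into existence and uniqueness. For existence, I start from any nonnegative representation $\a = \sum n'_j\b_j + \sum l'_j\g_j$ (which exists since $\a \in S_k + U_i$) and iteratively apply two kinds of substitutions. First, if $n'_j \ge q_j$ for some $j > 0$, I substitute $q_j\b_j = \sum_{l < j} n_{j,l}\b_l$ (nonnegative coefficients by the construction in Section \ref{construction}). Second, if a monomial $T_{i'}^{c_{i'}s_{i'}}\prod_l P_l^{a_l}\prod_{l < i'} T_l^{c_l}$ with $\bar{d} \in \D_{i'}$ divides the current monomial, I substitute using the value identity $c_{i'}s_{i'}\g_{i'} + \sum a_l\b_l + \sum c_l\g_l = \sum n_{\bar{d},l}\b_l + \sum l_{\bar{d},l}\g_l$, whose right-hand side has nonnegative coefficients by construction (inductively applying this very proposition to $|\bar{d}| \in S_{m_{i'}} + U_{i'-1}$). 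Both substitutions preserve nonnegativity, and termination follows from a well-founded order on exponent tuples prioritizing higher-index variables, since each substitution strictly decreases this order. At termination the monomial is $\T$-irreducible by definition.

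For uniqueness I induct on $i$. The base case $i = 0$ follows from Lemma \ref{group representation}: $\T$-irreducibility forces $n_j < q_j$ for $j > 0$ (since $P_j^{q_j} \in \P \subset \T$), so any $\T$-irreducible nonnegative representation of $\a \in S_k$ coincides with the canonical form supplied by that lemma, whose $n_0 \ge 0$ is ensured by the existence argument above. For the inductive step, suppose $\sum n_j\b_j + \sum_{j \le i} l_j\g_j$ and $\sum n'_j\b_j + \sum_{j \le i} l'_j\g_j$ are two $\T$-irreducible nonnegative representations of $\a \in S_k + U_i$. It suffices to show $l_i = l'_i$; the inductive hypothesis applied to $\a - l_i\g_i \in S_m + U_{i-1}$ then finishes the argument. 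Assume toward contradiction that $l_i > l'_i$; then $l_i - l'_i$ is a positive multiple of $s_i$, say $cs_i$ with $c \ge 1$. I construct $\bar{d} = (a_0, \ldots, a_{m_i}, c_1, \ldots, c_{i-1}, c) \in \D_i$ with $a_l \le n_l$ and $c_l \le l_l$ by setting $a_l = \max(0, n_l - n'_l)$ and $c_l = \max(0, l_l - l'_l)$, then passing to a reduced representative. The membership $\sum a_l\b_l + \sum c_l\g_l + cs_i\g_i \in S_{m_i} + U_{i-1}$ is verified by subtracting the two given representations, and $\T_i$-irreducibility of $\prod P_l^{a_l}\prod T_l^{c_l}$ is inherited by restriction from $\T$-irreducibility of the full monomial. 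The corresponding element of $\T_{i+1}\setminus\T_i$ divides $\prod P_j^{n_j}\prod T_j^{l_j}$, contradicting irreducibility.

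The main obstacle is the construction and verification of the witnessing tuple $\bar{d} \in \D_i$ in the uniqueness step: reducedness is a subtle minimality condition that is not automatic from the componentwise choice I begin with, so some descent or auxiliary minimization is needed, and one must ensure this descent does not violate the bounds $a_l \le n_l$, $c_l \le l_l$. A secondary complication is bookkeeping around indices $m_i < l \le m$ that appear in Lemma \ref{group representation} but not in the definition of $\D_i$; these exponents simply pass through the rewriting unchanged, but this should be made explicit.
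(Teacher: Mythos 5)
Your existence argument is a genuinely different route from the paper's: the paper builds the representation top--down by induction on the pair $(i,k)$, decomposing $(S_k+U_i)$ into cosets of $(S_{k-1}+U_i)$ (respectively $(S_k+U_{i-1})$) and choosing the exponent of $P_k$ (respectively the \emph{minimal} admissible exponent of $T_i$), whereas you rewrite an arbitrary nonnegative representation bottom--up using the binomial relations. That is workable, and your termination order (finitely supported exponent tuples compared at the highest differing index, with the $T$'s ranked above the $P$'s) is well founded, though you should say so explicitly since both substitutions increase infinitely many lower slots in principle.

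The uniqueness half has a genuine gap. You induct on $i$ alone and immediately compare $l_i$ with $l'_i$, but the witnessing tuple you want lies in $\D_i$, whose entries only have slots for $P_0,\dots,P_{m_i}$ and whose reducedness condition requires membership of $\sum_{j\le m_i}a_j\b_j+\sum_{j<i}c_j\g_j+c s_i\g_i$ in $(S_{m_i}+U_{i-1})$. Subtracting your two representations only places the positive-part difference in $(S_m+U_{i-1})$ with $m=\max(k,m_i)$, and when $k>m_i$ that difference may genuinely involve $\b_j$ with $m_i<j\le m$; your recipe $a_l=\max(0,n_l-n'_l)$ then produces a tuple that is not of the right shape, and discarding the high-index slots destroys the membership you need. (You flag the indices $m_i<l\le m$ as a ``bookkeeping'' issue for the rewriting, but in the uniqueness step they are the crux, and $\T$-irreducibility alone does not force $n_j=n'_j$ there, since the $l_j$ are only bounded by $r_js_j$, not $s_j$, so Lemma \ref{group representation} does not apply directly.) The paper closes exactly this hole by inducting on $(i,k)$ and first proving $n_k=n'_k$ for each $k>m_i$: the cosets $n_k\b_k+(S_{k-1}+U_i)$, $0\le n_k<q_k$, are pairwise disjoint because $(G_k+H_i)=\tfrac{1}{q_k}(G_{k-1}+H_i)$ for $k>m_i$, so $0<(a_2-a_1)<q_k$ cannot satisfy $(a_2-a_1)\b_k\in(G_{k-1}+H_i)$. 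You need this step (or an equivalent) before the $\D_i$ construction; with it, the remaining comparison of $l_i$ and $l'_i$ in the case $k=m_i$ proceeds as you describe, and your minimal-subtuple descent to achieve reducedness is indeed compatible with the componentwise bounds.
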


\begin{proof}
We first observe that if $i$ is fixed and $k\le m_i$ then $\a\in(S_k+U_i)$ implies
$\a\in(S_{m_i}+U_i)$ whereas the conclusion of the lemma does not depend on $k$.
Thus, it is enough to show that the statement holds for all $i\ge 0$ and $k\ge m_i$.
We use induction on $(i,k)$. If $i=0$ and $k=m_0=0$ then the statement is trivial.

Assume that $i$ is fixed and $k>m_i$. Then since $\b_k$ generates $(S_k+U_i)$ over
$(S_{k-1}+U_i)$ and $q_k\b_k\in (S_{k-1}+U_i)$ we can write
\begin{equation}\label{1}
(S_k+U_i)=(S_{k-1}+U_i)\cup(\b_k+S_{k-1}+U_i)\cup\dots\cup((q_k-1)\b_k+S_{k-1}+U_i)
\end{equation}
Moreover, if \mbox{$(a_1\b_k+S_{k-1}+U_i)\cap(a_2\b_k+S_{k-1}+U_i)\neq\emptyset$} for
some nonnegative integers $a_1<a_2<q_k$, then we have $(a_2-a_1)\b_k\in(G_{k-1}+H_i)$ 
with $(a_2-a_1)>0$. Thus $(a_2-a_1)$ is a multiple of $q_k$. This contradicts the 
assumption that $a_2<q_k$. Therefore the union on the right of (\ref{1}) is a disjoint 
union of $(S_{k-1}+U_i)$-sets.

There exists a unique nonnegative integer $n_k<q_k$ such that $\a\in(n_k\b_k+S_{k-1}+U_i)$.
Using the inductive hypothesis for $(\a-n_k\b_k)\in(S_{k-1}+U_i)$ we find a $(k+i)$-tuple
of nonnegative integers $(n_0,\dots, n_{k-1}, l_1,\dots,l_i)$ such that
$\a=\sum_{j=0}^k n_j\b_j+\sum_{j=1}^i l_j\g_j$ and
$\prod_{j=0}^{k-1} P_j^{n_j}\prod_{j=1}^i T_j^{l_j}$ is irreducible with respect
to $\T$. It remains to check that $\prod_{j=0}^k P_j^{n_j}\prod_{j=1}^i T_j^{l_j}$ is
irreducible with respect to $\T$.

Denote $\prod_{j=0}^k P_j^{n_j}\prod_{j=1}^i T_j^{l_j}$ by  $M$ and
$\prod_{j=0}^{k-1} P_j^{n_j}\prod_{j=1}^i T_j^{l_j}$ by $M_1$, also if $t>0$ and 
$\bar{d}\in\D_t$ denote the components of $\bar{d}$ by $(a_0,\dots,a_{m_t},c_1,\dots,c_{t-1},c_t)$. We notice 
that $M$ is irreducible with respect to $\P$ since $M_1$ is irreducible with respect to $\P$
and $n_k<q_k$. Also if $\bar{d}\in\D_t$ for some  $t>i$ then since $M$ does not have $T_t$ 
in it, $M$ is irreducible with respect to the subset
$\{T_t^{c_ts_t}\prod_{j=0}^{m_t} P_j^{a_j}\prod_{j=1}^{t-1} T_j^{c_j}\}$ of $\T_{t+1}$
corresponding to $\bar{d}$. Finally if $\bar{d}\in\D_t$ for some  $t\le i$ then since 
$k>m_i\ge m_t$ and therefore 
$\{T_t^{c_ts_t}\prod_{j=0}^{m_t} P_j^{a_j}\prod_{j=1}^{t-1} T_j^{c_j}\}$ does not have $P_k$
in it, $M$ is reducible with respect to the subset
$\{T_t^{c_ts_t}\prod_{j=0}^{m_t} P_j^{a_j}\prod_{j=1}^{t-1} T_j^{c_j}\}$ of $\T_{t+1}$
corresponding to $\bar{d}$ if and only if $M_1$ is reducible with respect to this
subset. Thus we deduce that $M$ is irreducible with respect to $\T$ as required.

To check that such representation is unique we use the same argument as in the proof of
Lemma \ref{group representation}. Assume that $\a=\sum_{j=0}^k n'_j\b_j+\sum_{j=1}^i l'_j\g_j$
is another representation satisfying the requirements of the proposition. Then $0\le n'_k<q_k$
and $(\a-n'_k\b_k)\in(S_{k-1}+U_i)$. On the other hand $n_k$ is the unique integer such that
$0\le n_k<q_k$ and $\a\in(n_k\b_k+S_{k-1}+U_i)$, thus, $n'_k=n_k$. Then by the inductive
hypothesis since $\a-n_k\b_k=\sum_{j=0}^{k-1}n'_j\b_j+\sum_{j=1}^i l'_j\g_j$ we have
$n'_j=n_j$ and $l'_j=l_j$ for all $j$.

Assume now that $i>0$ and $k=m_i$. Since $\g_i$ generates $(S_k+U_i)$ over
$(S_k+U_{i-1})$ and $r_is_i\g_i\in(S_k+U_{i-1})$ we can write
$$
(S_k+U_i)=(S_k+U_{i-1})\cup(\g_i+S_k+U_{i-1})\cup\dots\cup((r_is_i-1)\g_i+S_k+U_{i-1}),
$$
where the union of $(S_k+U_{i-1})$-sets on the right does not have to be a disjoint union
any more. Let $l_i<r_is_i$ be the minimal nonnegative integer such that
$\a\in(l_i\g_i+S_k+U_{i-1})$. Using the inductive hypothesis for $(\a-l_i\g_i)\in(S_k+U_{i-1})$
we find a $(k+i)$-tuple of nonnegative integers $(n_0,\dots, n_k, l_1,\dots,l_{i-1})$ such that
$\a=\sum_{j=0}^k n_j\b_j+\sum_{j=1}^i l_j\g_j$ and
$\prod_{j=0}^k P_j^{n_j}\prod_{j=1}^{i-1} T_j^{l_j}$ is irreducible with respect to $\T$. It
remains to check that $\prod_{j=0}^k P_j^{n_j}\prod_{j=1}^i T_j^{l_j}$ is irreducible with
respect to $\T$.

Denote $\prod_{j=0}^k P_j^{n_j}\prod_{j=1}^i T_j^{l_j}$ by  $M$ and
$\prod_{j=0}^k P_j^{n_j}\prod_{j=1}^{i-1} T_j^{l_j}$ by $M_1$, also  if $t>0$ and 
$\bar{d}\in\D_t$ denote the components of $\bar{d}$ by $(a_0,\dots,a_{m_t},c_1,\dots,c_{t-1},c_t)$. We notice that 
$M$ is irreducible with respect to $\T_i$ since $M_1$ is irreducible with respect to $\T_i$.
Also if $\bar{d}\in\D_t$ for some  $t>i$ then since $M$ does not have $T_t$ in it,
$M$ is irreducible with respect to the subset
$\{T_t^{c_ts_t}\prod_{j=0}^{m_t} P_j^{a_j}\prod_{j=1}^{t-1} T_j^{c_j}\}$ of $\T_{t+1}$
corresponding to $\bar{d}$. Finally assume that there exists $\bar{d}\in\D_i$ such that
$M$ is reducible with respect to the subset
\mbox{$\{T_i^{c_is_i}\prod_{j=0}^k P_j^{a_j}\prod_{j=1}^{i-1} T_j^{c_j}\}$}
of $\T_{i+1}$ corresponding to $\bar{d}$. Then we have
$$
\a=\sum_{j=0}^k(n_j-a_j)\b_j+\sum_{j=0}^{i-1}(l_j-c_j)\g_j+(l_i-c_is_i)\g_i+|\bar{d}|,
$$
where all the coefficients in the summations are nonnegative integers,
$|\bar{d}|\in(S_k+U_{i-1})$ and $(l_i-c_is_i)<l_i$. Thus $\a\in((l_i-c_is_i)\g_i+S_k+U_{i-1})$,
which contradicts the choice of $l_i$ as the smallest nonnegative integer such that
$\a\in(l_i\g_i+S_k+U_{i-1})$. This shows that $M$ is irreducible with respect to $\T$.

To check that such representation is unique assume that
$\a=\sum_{j=0}^k n'_j\b_j+\sum_{j=1}^i l'_j\g_j$ is another representation satisfying the
requirements of the proposition. If $l_i=l'_i$ then applying the inductive hypothesis to
$\a-l_i\g_i$ we deduce that $n_j=n'_j$ and $l_j=l'_j$ for all $j$. Thus it 
suffices to show that $l_i=l'_i$.

Denote by $K$ the set of indices $j$ such that $n_j> n'_j$ and by $K'$ the set of indices $j$ 
such that $n'_j>n_j$, denote by $I$ the set of indices $j$ such that $l_j>l'_j$ and by $I'$ 
the set of indices $j$ such that $l'_j>l_j$. Then we can write
\begin{equation}\label{*}
\sum_{j\in K}(n_j-n'_j)\b_j+\sum_{j\in I}(l_j-l'_j)\g_j=
\sum_{j\in K'}(n'_j-n_j)\b_j+\sum_{j\in I'}(l'_j-l_j)\g_j,
\end{equation}
where all the coefficients in the summations are positive integers. We will show that 
$i\not\in(I\cup I')$ and therefore $l_i=l'_i$.

Assume that $i\in I$ then equation (\ref{*}) implies that $(l_i-l'_i)\g_i\in(G_k+H_{i-1})$ 
and therefore $(l_i-l'_i)=rs_i$ for some positive integer $r$, and moreover,
$$
\sum_{j\in K}(n_j-n'_j)\b_j+\sum_{j\in I}(l_j-l'_j)\g_j\in(S_k+U_{i-1}).
$$
So, there exists a $(k+i+1)$-tuple of nonnegative integers 
$\bar{d}=(a_0,\dots,a_k,c_1,\dots,c_i)$ such that 
\begin{align*}
& \bar{d} \text{ is reduced with respect to } ((S_k+U_{i-1}),s_i\g_i);\\ 
& a_j=0 \text{ if } j\not\in K,\; a_j\le (n_j-n'_j) \text{ if } j\in K;\\
& \,c_j=0 \text{ if } j\not\in I,\,\; c_j\le (l_j-l'_j) \text{ if } j\in I \text{ and } c_i\le r.
\end{align*}
Since $\prod_{j=0}^k P_j^{n_j}\prod_{j=1}^i T_j^{l_j}$ is irreducible with respect 
to $\T$, we get that $\prod_{j=0}^k P_j^{a_j}\prod_{j=1}^{i-1} T_j^{c_j}$ is  irreducible with 
respect to $\T$. Thus  $\bar{d}\in\D_i$ and 
$T_i^{c_is_i}\prod_{j=0}^k P_j^{a_j}\prod_{j=1}^{i-1} T_j^{c_j}\in \T_{i+1}$. This contradicts 
irreducibility of $\prod_{j=0}^k P_j^{n_j}\prod_{j=1}^i T_j^{l_j}$. Thus $i\not\in I$.

The same argument with $n_j$ and $l_j$ switched to $n'_j$ and $l'_j$, respectively, shows that 
$i\not\in I'$.
\end{proof}

The next statement is a straight forward consequence of   Proposition \ref{uniqueness}. 
\begin{corollary}\label{order}
Suppose that $(a_0,\dots,a_k,c_1,\dots,c_i)\neq (b_0,\dots,b_k,d_1,\dots,d_i)$ are  distinct $(k+1+i)$-tuples of nonnegative integers. If  $f=\prod_{j=0}^k P_j^{a^j}\prod_{j=1}^i T_j^{c_j}$ and $g=\prod_{j=0}^k P_j^{b_j}\prod_{j=1}^i T_j^{d_j}$ are both irreducible with respect to $\T$ then $\n(f)\neq \n(g)$.
\end{corollary}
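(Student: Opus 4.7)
The plan is to argue by contradiction using the uniqueness statement of Proposition \ref{uniqueness}. Suppose $\n(f)=\n(g)=\alpha$. Then
\[
\alpha=\sum_{j=0}^{k}a_j\b_j+\sum_{j=1}^{i}c_j\g_j=\sum_{j=0}^{k}b_j\b_j+\sum_{j=1}^{i}d_j\g_j,
\]
so in particular $\alpha\in(S_k+U_i)$, which puts us in the setting where Proposition \ref{uniqueness} applies with the given $k$ and $i$.

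Next I would set $m=\max(m_i,k)$ and, if $k<m_i$, extend both exponent vectors to length $m+1$ by padding with zeros: $a_{k+1}=\cdots=a_m=0$ and $b_{k+1}=\cdots=b_m=0$. This padding does not change the products $f$ or $g$ (since $P_j^0=1$), and it does not affect irreducibility with respect to $\T$ — irreducibility only concerns which monomials in $\T$ divide the given product, and adjoining trivial factors $P_j^0$ cannot introduce new divisibility relations. Thus the padded tuples $(a_0,\dots,a_m,c_1,\dots,c_i)$ and $(b_0,\dots,b_m,d_1,\dots,d_i)$ both provide $(m+1+i)$-tuples of nonnegative integers representing $\alpha$ in the form $\sum_{j=0}^{m}n_j\b_j+\sum_{j=1}^{i}l_j\g_j$ such that $\prod_j P_j^{n_j}\prod_j T_j^{l_j}$ is irreducible with respect to $\T$.

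Finally, Proposition \ref{uniqueness} asserts that such a representation is unique. Hence the two padded tuples must agree coordinate by coordinate, which in particular forces $(a_0,\dots,a_k,c_1,\dots,c_i)=(b_0,\dots,b_k,d_1,\dots,d_i)$, contradicting the assumption that the original tuples were distinct. There is no real obstacle in this argument; the only point that deserves explicit mention is the padding step and the observation that irreducibility with respect to $\T$ is preserved by appending zero exponents, so that Proposition \ref{uniqueness} can be invoked in its exact form.
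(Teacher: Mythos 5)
Your proof is correct and is exactly the argument the paper intends: the paper offers no written proof, simply calling the corollary a straightforward consequence of Proposition \ref{uniqueness}, and your contradiction argument (padding both exponent tuples with zeros up to length $\max(m_i,k)+1+i$ and invoking the uniqueness of the irreducible representation of $\alpha\in(S_k+U_i)$) is the natural way to make that precise. The remark about padding preserving irreducibility is the right point to flag, and it holds because the padded product is literally the same monomial.
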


The next corollary of Lemma \ref{group representation} is a technical statement that will be used to describe  redundant jumping polynomials.  

\begin{corollary}\label{mi-bound}
Suppose that $i,k,M\in\ZZ_{\ge 0}$ and  the $(M+1+i)$-tuple of nonnegative integers $(a_0,\dots,a_M,c_1,\dots,c_i)$ is such that $a_M>0$ and $\prod_{j=0}^M P_j^{a^j}\prod_{j=1}^i T_j^{c_j}$ is irreducible with respect to $\T$. Set $\a=\sum_{i=0}^{M}a_j\b_j+\sum_{j=1}^{i}c_j\g_j$.

If $\a\in(G_k+H_i)$ then $M\le\max(k,m_i)$.
\end{corollary}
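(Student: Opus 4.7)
The plan is to argue by contradiction. Suppose $M > \max(k, m_i)$, so in particular $M > m_i$ and $M > k$. The strategy is to reduce modulo $G_{M-1}+H_i$ and exploit the fact that $\beta_M$ retains order $q_M$ in the quotient $(G_M+H_i)/(G_{M-1}+H_i)$, which will collide with the irreducibility constraint $a_M<q_M$.

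First I would record the consequences of irreducibility: since $\P\subset\T$ and $P_j^{q_j}\in\P$ for every $j>0$, irreducibility of $\prod_{j=0}^M P_j^{a_j}\prod_{j=1}^i T_j^{c_j}$ with respect to $\T$ forces $a_j<q_j$ for all $j>0$, and in particular $0<a_M<q_M$. Next, because $k\le M-1$, the hypothesis $\alpha\in G_k+H_i$ gives $\alpha\in G_{M-1}+H_i$. Writing
\[
\alpha \;=\; a_M\beta_M \;+\; \Bigl(\sum_{j=0}^{M-1}a_j\beta_j+\sum_{j=1}^i c_j\gamma_j\Bigr),
\]
and observing that the parenthesized summand already lies in $G_{M-1}+H_i$, we obtain $a_M\beta_M\in G_{M-1}+H_i$.

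The heart of the argument is then the structural identity from the preamble of section~\ref{preliminaries}: since $M-1\ge m_i$, we have $G_{M-1}+H_i=\bigl(\prod_{j=1}^{i}\tfrac{1}{s_j}\bigr)G_{M-1}$ and $G_M+H_i=\bigl(\prod_{j=1}^{i}\tfrac{1}{s_j}\bigr)G_M$. Multiplication by $\prod_{j=1}^i s_j$ is therefore a group isomorphism identifying the inclusion $G_{M-1}+H_i\hookrightarrow G_M+H_i$ with $G_{M-1}\hookrightarrow G_M$. Consequently
\[
(G_M+H_i)/(G_{M-1}+H_i)\;\cong\; G_M/G_{M-1}\;\cong\;\ZZ/q_M\ZZ,
\]
and the image of $\beta_M$ has order exactly $q_M$. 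From $a_M\beta_M\in G_{M-1}+H_i$ we conclude $q_M\mid a_M$, which contradicts $0<a_M<q_M$.

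The only delicate point is step involving the quotient computation: one must carefully justify that $M-1\ge m_i$ is enough to invoke the identity $G_{M-1}+H_i=\bigl(\prod_{j=1}^{i}\tfrac{1}{s_j}\bigr)G_{M-1}$ from section~\ref{preliminaries}, and that the induced map on quotients really is an isomorphism rather than merely a surjection. Once this is in place, the contradiction with $0<a_M<q_M$ is immediate and finishes the proof. Edge cases ($i=0$, where $H_i=\{0\}$ and $m_i=0$, or $M=0$) are handled trivially since the hypothesis $M>\max(k,m_i)$ cannot occur when $M=0$ and $k=m_i=0$ simultaneously, and for $i=0$ the whole argument collapses to the elementary fact that $a_M\beta_M\in G_{M-1}$ with $a_M<q_M$ forces $a_M=0$.
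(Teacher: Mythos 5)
Your argument is correct in substance, but it takes a genuinely different route from the paper. The paper assumes (harmlessly) that $k\ge m_i$, applies Lemma \ref{group representation} twice --- once to $\alpha$ and once to $\sum_{j=0}^{k}a_j\beta_j+\sum_{j=1}^{i}c_j\gamma_j$ --- and observes that, because irreducibility forces $a_j<q_j$ for $j>0$, padding the first representation with zeros and appending $a_{k+1},\dots,a_M$ to the second yields two normal forms of $\alpha$ in $G_M+H_i$; the uniqueness clause of that lemma then forces $a_M=0$. You instead bypass the uniqueness lemma and work directly with the index of $(G_{M-1}+H_i)$ in $(G_M+H_i)$, isolating $a_M\beta_M$ and deriving $q_M\mid a_M$. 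This is arguably more economical, since it only uses the coset decomposition that underlies the proof of Lemma \ref{group representation} rather than its full statement; what you lose is that the paper's version records, as a byproduct, the comparison of all the coordinates, not just the top one.

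One step needs repair. The isomorphism given by multiplication by $\prod_{j=1}^{i}s_j$ does identify $(G_M+H_i)/(G_{M-1}+H_i)$ with $G_M/G_{M-1}\cong\ZZ/q_M\ZZ$, but it sends $\beta_M$ to $(\prod_{j=1}^{i}s_j)\beta_M$, whose order in $G_M/G_{M-1}$ is a priori only $q_M/\gcd(q_M,\prod_j s_j)$; so this identification by itself does not show that $\beta_M$ has order exactly $q_M$ in the quotient. The correct justification is the one the paper uses inside Lemma \ref{group representation}: since $G_M+H_i=(G_{M-1}+H_i)+\beta_M\ZZ$, the class of $\beta_M$ \emph{generates} the quotient, and a generator of a cyclic group of order $q_M$ has order $q_M$. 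With that one-line substitution (you still need your index computation, via the identities of section \ref{preliminaries} and the hypothesis $M-1\ge m_i$, to know the quotient has order $q_M$), the contradiction with $0<a_M<q_M$ goes through and the proof is complete.
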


\begin{proof}
We first observe that it is enough to prove the statement under the assumption $k\ge m_i$. Applying Lemma \ref{group representation} to $\a$ we find $n_0\in\ZZ$ and a $(k +i)$-tuple of nonnegative integers $(n_1,\dots,n_k,l_1,\dots,l_i)$ such that  $\a=\sum_{i=0}^{k}n_j\b_j+\sum_{j=1}^{i}l_j\g_j$ and $n_j<q_j$, $l_j<s_j$ for all $j>0$.

Assume for contradiction that $M>k$ and apply Lemma \ref{group representation} to $\sum_{i=0}^{k}a_j\b_j+\sum_{j=1}^{i}c_j\g_j$ to find $n'_0\in\ZZ$ and a $(k +i)$-tuple of nonnegative integers $(n'_1,\dots,n'_k,l'_1,\dots,l'_i)$ such that  $\a=\sum_{i=0}^{k}n'_j\b_j+\sum_{j=k+1}^Ma_j\b_j+\sum_{j=1}^{i}l_j\g_j$ and $n'_j<q_j$, $l'_j<s_j$ for all $j>0$. Notice that since $\prod_{j=0}^M P_j^{a^j}\prod_{j=1}^i T_j^{c_j}$ is irreducible with respect to $\T$ we also have $a_j<q_j$ for all $j>0$. Thus the $(M+i)$-tuples $(n_1,\dots,n_k,0,\dots,0,l_1,\dots,l_i)$ and $(n'_1,\dots,n'_k,a_{k+1},\dots,a_M,l'_1,\dots,l'_i)$ have to coincide. This contradicts the assumption $a_M>0$. So $M\le k$.

\end{proof}

We now recall the definition of a generating sequence of $\n$. Let $\F_{R} = \nu(R \backslash \{0\})$ be the semigroup consisting of the values of nonzero 
elements of $R$. For $\s \in\F_R$, let $I_{\s}=\{f\in R \mid \ \n(f)\ge \s\}$. 
A (possibly infinite) sequence $\{ Q_i \}_{i>0}$ of elements of $R$ is a {\it generating sequence}  of $\n$  if for every $\s\in \F_R$ the ideal $I_\s$ is generated by the set $\{\prod_{i=1}^k{Q_i}^{b_i}\mid k,\ b_i\in \ZZ_{\ge 0},\ \n(\prod_{i=1}^k{Q_i}^{b_i})\ge \s\}$. Notice that if $\s \in\F_R$ then there exist $k,b_1,\dots, b_k\in\ZZ_{\ge0}$ such that $\n(\prod_{i=1}^k Q_i^{b_i})=\s$. In particular, the set of values $\{\n(Q_i)\}_{i>0}$ generate $\F_R$ as a semigroup. A generating sequence $\{ Q_i \}_{i>0}$ is minimal if no its proper subsequence is a generating sequence of $\n$.

For $\s\in\F_R$ denote by $\A_\s$ the ideal of $R$ generated by the set
$
\{\prod_{j=0}^k P_j^{a_j}\prod_{j=1}^iT_j^{c_j}\mid 
i,k,a_j,c_j\in\ZZ_{\ge 0},\ \n(\prod_{j=0}^k P_j^{a_j}\prod_{j=1}^iT_j^{c_j})\ge\s\}$
and by $\A^+_\s$ the ideal of $R$  generated by
$\{\prod_{j=0}^k P_j^{a_j}\prod_{j=1}^iT_j^{c_j}\mid 
i,k,a_j,c_j\in\ZZ_{\ge 0},\ \n(\prod_{j=0}^k P_j^{a_j}\prod_{j=1}^iT_j^{c_j})>\s\}$.

We will show that $I_\s=\A_\s$ for all $\s\in\F_R$ to conclude that 
$\{P_j\}_{j\ge 0}\cup\{T_j\}_{j>0}$ is a generating sequence of $\n$. To this end we observe the 
following properties of $\A$-ideals 

\begin{itemize}

\item[1)] $\A_\s\subset I_\s$

\item[2)] if $\s_1<\s_2$ then $\A_{\s_2}\subset\A^+_{\s_1}\subset\A_{\s_1}$

\item[3)] $\A_{\s_1}\A_{\s_2}\subset\A_{\s_1+\s_2}$ and
$\A^+_{\s_1}\A_{\s_2}\subset\A^+_{\s_1+\s_2}$

\item[4)] if $\a=\min(\b_0,\b_1,\g_1)$ then $m_R\subset\A_\a\subset\A^+_0$

\item[5)] for any $\s\in\F_R$ there exists $\s'\in\F$ such that $\A^+_\s=\A_{\s'}$

\noindent (we take $\s'=\min\{\a\in\F_R|\ \s<\a\le\s+\b_0\}$, where $\s'$ is well defined  
since the set on the right is a nonempty finite set.)

\end{itemize}

\begin{lemma}\label{reduced}
Suppose that $k,i\ge 0$, $f\in R\setminus\{0\}$ and $f=\prod_{j=0}^kP_j^{a_j}\prod_{j=1}^iT_j^{c_j}$ for some
$a_j,c_j\in\ZZ_{\ge 0}$. For $\a=\n(\prod_{j=0}^k P_j^{a_j}\prod_{j=1}^iT_j^{c_j})$
let the $(m+1+i)$-tuple of nonnegative integers $(n_0,\dots,n_m,l_1,\dots,l_i)$ be as 
described by Proposition \ref{uniqueness}. 

Then there exist $\th\in k\setminus\{0\}$ and $f'\in \A_{\a}^+$ such that 
$f=\th\prod_{j=0}^m P_j^{n_j}\prod_{j=1}^i T_j^{l_j}+f'$.
\end{lemma}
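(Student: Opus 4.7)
The plan is to induct on the exponent vector of the monomial factorization of $f$ with respect to a well-founded ordering chosen so that each substitution using a defining relation strictly decreases it. All the substitutions we perform preserve both the value $\a=\n(f)$ and the irreducible representation $(n_0,\dots,n_m,l_1,\dots,l_i)$ supplied by Proposition \ref{uniqueness}, so this representation remains the target throughout.

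For the base case, if $\prod_{j=0}^k P_j^{a_j}\prod_{j=1}^i T_j^{c_j}$ is already irreducible with respect to $\T$, then the uniqueness part of Proposition \ref{uniqueness} forces its exponent vector (padded with zeros in positions $k<j\le m$ when $k<m=\max(k,m_i)$) to coincide with $(n_0,\dots,n_m,l_1,\dots,l_i)$, and we may take $\th=1$, $f'=0$.

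For the inductive step, if the monomial is reducible with respect to $\T$, some element of $\T$ divides it. In case (a), $a_j\ge q_j$ for some $j>0$, and the defining relation $P_j^{q_j}=P_{j+1}+\l_j\prod_{t<j}P_t^{n_{j,t}}$ yields
$$
f=\l_j\prod_{t<j}P_t^{n_{j,t}}\cdot\frac{f}{P_j^{q_j}}+P_{j+1}\cdot\frac{f}{P_j^{q_j}};
$$
since $\b_{j+1}>q_j\b_j$, the second summand lies in $\A_\a^+$, while the first is $\l_j\in k\setminus\{0\}$ times a new monomial $f_1$ of value $\a$. In case (b), some $\bar d\in\D_t$ has its associated divisor inside $f$; the defining relation for $T_{\bar d}$, combined with $\n(T_{\bar d})>|\bar d|$, analogously splits $f$ into an element of $\A_\a^+$ plus $\m_{\bar d}\in k\setminus\{0\}$ times a new monomial $f_1$ of value $\a$. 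Applying the inductive hypothesis to $f_1$ and combining gives the desired expression.

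For termination --- the main obstacle --- order the exponent tuples $(a_0,\dots,a_m,c_1,\dots,c_i)$ lexicographically from the right, with $c_i$ most significant and $a_0$ least. An argument in the spirit of Corollary \ref{mi-bound} shows that the $P$-indices stay bounded by $m=\max(k,m_i)$ and the $T$-indices by $i$ throughout the procedure, so only finitely many monomials of value $\a$ can appear. Give case (b) priority: pick $t$ maximal such that some $\bar d\in\D_t$ provides a divisor in $f$, so the substitution strictly decreases $c_t$ while leaving $c_{t+1},\dots,c_i$ fixed. Otherwise pick $j$ maximal with $a_j\ge q_j$, so the $P$-substitution strictly decreases $a_j$ while leaving $a_{j+1},\dots,a_m$ and all $c_t$ fixed. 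Either way the lex order descends and the induction terminates. The subtle point is that a $T$-substitution can perturb lower-index $P$-exponents arbitrarily; placing the $T$-moves at the more significant positions of the order is precisely what prevents this from disrupting termination.
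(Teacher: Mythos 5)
Your argument is correct, and its engine is the same one the paper uses: replace an occurrence of a generator of $\T$ (either $P_j^{q_j}$ or $T_t^{c_ts_t}\prod P_j^{a_j}\prod T_j^{c_j}$) by the low-value tail of its binomial relation, discard the high-value branch ($P_{j+1}$, resp.\ $T_{\bar d}$, times the cofactor) into $\A^+_\a$, and invoke the uniqueness clause of Proposition \ref{uniqueness} once an irreducible monomial of value $\a$ is reached. Where you genuinely differ is the organization of the induction. The paper runs a nested induction on $(i,c_i,k)$: it first strips the top $P$-variable by writing $a_k=bq_k+n_k$ and expanding $P_k^{bq_k}$ wholesale (so $k$ drops), then strips $T_i^{c_i}$ and handles a possible $\D_i$-reduction by a sub-induction on $c_i$. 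You instead run a single rewriting procedure and certify termination by one well-founded lexicographic order on the full exponent vector, with $T$-exponents more significant than $P$-exponents and higher indices more significant than lower ones. That is the right order: a $\D_t$-substitution lowers $c_t$ while fixing $c_{t+1},\dots,c_i$ (the branch containing $T_{\bar d}$, which would raise a $T$-index, is the one discarded into $\A^+_\a$), and a $\P$-substitution lowers some $a_j$ while fixing everything more significant. Your scheme is more uniform; its only extra obligation is the boundedness of indices, which you attribute to Corollary \ref{mi-bound} but which in fact follows immediately from inspecting the substitutions --- the surviving tail of the $\P$-relation at index $j$ involves only $P_0,\dots,P_{j-1}$, and that of a $\D_t$-relation only $P_0,\dots,P_{m_t}$ with $m_t\le m_i\le m$ and $T_1,\dots,T_{t-1}$. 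A small further remark: the priority you give to the $T$-moves (and the maximality of $t$ and $j$) is not actually needed, since any applicable substitution already strictly decreases your order; it is harmless but can be dropped.
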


\begin{proof}
As above it suffices to consider the case when $i\ge 0$ and $k\ge m_i$.
We set $c_0=0$ and use induction on $(i,c_i,k)$. If $i=0$ and 
$k=m_0=0$ then the statement holds with $\th=1$, $n_0=a_0$ and $f'=0$.

Assume that $i$ is fixed and $k>m_i$. Let $n_k, b\in\ZZ_{\ge 0}$ be such that 
$a_k=bq_k+n_k$ and $n_k<q_k$. Then 
$$
P_k^{a_k}=P_k^{n_k}(\l_k\prod_{j=0}^{k-1}P_j^{n_{k,j}}+P_{k+1})^b= 
(\l_k^b\prod_{j=0}^{k-1}P_j^{bn_{k,j}})P_k^{n_k}+h
$$
where $h\in\A^+_{a_k\b_k}$. Let $g=\prod_{j=0}^{k-1}P_j^{a_j+bn_{k,j}}\prod_{j=1}^iT_j^{c_j}$ 
and $h'=\prod_{j=0}^{k-1}P_j^{a_j}\prod_{j=1}^iT_j^{c_j}h$.  Notice that  $h'\in\A^+_{\a}$. 
Since $k-1\ge m_i$ the inductive hypothesis applied to $g$  gives
$$
f=\l_k^{b}gP_k^{n_k}+h'=\l_k^b(\th'\prod_{j=0}^{k-1} P_j^{n_j}\prod_{j=1}^i T_j^{l_j}+g')P_k^{n_k}+h',
$$
where $\th'\in k\setminus \{0\}$, $g'\in\A^+_{\a-n_k\b_k}$ and $\prod_{j=0}^{k-1} P_j^{n_j}\prod_{j=1}^i T_j^{l_j}$ 
is irreducible with respect to $\T$. Then by the argument from the proof of Lemma 
\ref{uniqueness} we also have $\prod_{j=0}^k P_j^{n_j}\prod_{j=1}^i T_j^{l_j}$ is irreducible with respect to $\T$, and therefore, the statement holds with $\th=\l_k^{b}\th'$ and $f'=\l_k^{b}g'P_k^{n_k}+h'$.

Assume now that $i>0$ and $k=m_i$. Let $g=\prod_{j=0}^kP_j^{a_j}\prod_{j=1}^{i-1}T_j^{c_j}$.
Then by the inductive hypothesis applied to $g$
$$
f=gT_i^{c_i}=(\th'\prod_{j=0}^kP_j^{n'_j}\prod_{j=1}^{i-1}T_j^{l'_j})T_i^{c_i}+g'T_i^{c_i},
$$ 
where $\th'\in k\setminus\{0\}$ and $g'\in\A^+_{\a-c_i\g_i}$. If  $\prod_{j=0}^k P_j^{n'_j}\prod_{j=1}^{i-1} T_j^{l'_j}T_i^{c_i}$ is irreducible with respect to $\T$ then the 
statement holds with $\th=\th'$, $f'=g'T_i^{c_i}$ and 
$(n_0,\dots,n_m,l_1,\dots,l_i)=(n'_0,\dots,n'_k,l'_1,\dots,l'_{i-1},c_i)$. If 
$\prod_{j=0}^k P_j^{n'_j}\prod_{j=1}^{i-1} T_j^{l'_j}T_i^{c_i}$ is not irreducible with respect to $\T$ then there exists
a $(k+1+i)$-tuple of nonnegative integers $\bar{d}=(a'_0,\dots,a'_k,c'_1,\dots,c'_i)$
such that $a'_j\le n'_j$, $c'_j\le l'_j$ for all $j$, $0<c'_is_i\le c_i$ and 
$(T_i^{c'_is_i}\prod_{j=0}^kP_j^{a'_j}\prod_{j=1}^{i-1}T_j^{c'_j})\in\T$. Then from definition of $T_{\bar{d}}$
it follows that 
\begin{align*}
f &=\th'(\m_{\bar{d}}\prod_{j=0}^kP_j^{n_{\bar{d},j}}\prod_{j=1}^{i-1}T_j^{l_{\bar{d},j}}+T_{\bar{d}})
(\prod_{j=0}^kP_j^{n'_j-a'_j}\prod_{j=1}^{i-1}T_j^{l'_j-c'_j})T_i^{c_i-c'_is_i}+g'T_i^{c_i}\\
&=\m_{\bar{d}}\th'(\prod_{j=0}^kP_j^{n_{\bar{d},j}+n'_j-a'_j}\prod_{j=1}^{i-1}T_j^{l_{\bar{d},j}+l'_j-c'_j })
T_i^{c_i-c'_is_i}+h+g'T_i^{c_i},
\end{align*}
where $h\in\A^+_{\a}$. Let 
$g_1=(\prod_{j=0}^kP_j^{n_{\bar{d},j}+n'_j-a'_j}\prod_{j=1}^{i-1}T_j^{l_{\bar{d},j}+l'_j-c'_j })T_i^{c_i-c'_is_i}$. 
Since $c_i-c'_is_i<c_i$ by the inductive hypothesis applied to $g_1$
$$
f=\m_{\bar{d}}\th'g_1+h+g'T_i^{c_i}=\m_{\bar{d}}\th'(\th''\prod_{j=0}^kP_j^{n_j}
\prod_{j=1}^iT_j^{l_j}+g'_1)+h+g'T_i^{c_i},
$$
where $\th''\in k\setminus \{0\}$, $g'_1\in\A^+_{\a}$ and $\prod_{j=0}^k P_j^{n_j}\prod_{j=1}^i T_j^{l_j}$  is irreducible with respect to $\T$, and therefore, the statement holds with $\th=\m_{\bar{d}}\th'\th''$ 
and $f'=\m_{\bar{d}}\th'g'_1+h+g'T_i^{c_i}$.
\end{proof}

\begin{lemma}\label{i=a}
If $\g\in\F_R$, then $I_\g=\A_\g$.
\end{lemma}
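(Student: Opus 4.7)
Property (1) of the $\A$-ideals gives $\A_\g\subset I_\g$ for free, so only the reverse inclusion $I_\g\subset\A_\g$ requires proof. My plan, given $f\in I_\g$, is (i) to use the regular local structure of $R$ to replace $f$ by a polynomial representative, (ii) to apply Lemma~\ref{reduced} termwise and collect with the help of Corollary~\ref{order}, and (iii) to peel off leading irreducible monomials one value at a time.

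For step (i), choose $N$ with $N\min(\b_0,\b_1,\g_1)\ge\g$ and decompose $f=g+h$ with $g\in k[x,y,z]$ a polynomial of degree less than $N$ and $h\in m_R^N$. Every degree-$N$ monomial $x^ay^bz^c=P_0^aP_1^bT_1^c$ has value $a\b_0+b\b_1+c\g_1\ge\g$, so $m_R^N\subset\A_\g$, whence $h\in\A_\g$; the non-Archimedean inequality then gives $\n(g)\ge\g$. It therefore suffices to show that any polynomial $g\in k[x,y,z]$ with $\n(g)\ge\g$ lies in $\A_\g$.

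For steps (ii)--(iii), expand $g=\sum_\m c_\m x^{a_\m}y^{b_\m}z^{c_\m}$ with $c_\m\in k\setminus\{0\}$ and apply Lemma~\ref{reduced} to each monomial $P_0^{a_\m}P_1^{b_\m}T_1^{c_\m}$, obtaining $\th_\m M_\m+f'_\m$ with $M_\m$ the irreducible monomial of value $\a_\m:=a_\m\b_0+b_\m\b_1+c_\m\g_1$, $\th_\m\in k\setminus\{0\}$, and $f'_\m\in\A^+_{\a_\m}$. Corollary~\ref{order} implies that two irreducible monomials of the same value coincide, so grouping by value yields
\[g=\sum_t C_t M_{\a_t}+F',\qquad F'=\sum_\m c_\m f'_\m\in\A^+_{\b^\ast},\ \b^\ast:=\min_\m\a_\m,\]
with $C_t\in k$. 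If $\b^\ast<\g$ and $C_{\b^\ast}\ne 0$, the fact that every other term on the right has $\n$-value strictly greater than $\b^\ast$ (the $C_tM_{\a_t}$ with $t\ge 2$ satisfy $\a_t>\b^\ast$, and $F'\in\A^+_{\b^\ast}$ has value $>\b^\ast$) forces $\n(g)=\b^\ast<\g$, contradicting $\n(g)\ge\g$. Hence $C_{\b^\ast}=0$ whenever $\b^\ast<\g$, and consequently $g\in\A_{\b^{\ast+}}$, where $\b^{\ast+}$ denotes the successor of $\b^\ast$ in $\F_R$ (property~(5)). Iterating peels off the low values one at a time, separating out the contributions already in $\A_\g$.

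The principal obstacle is termination of this iteration: reapplying the reduction to the same polynomial gives the same decomposition, so one must change the polynomial representative to obtain a fresh one whose minimum monomial value is strictly greater. My plan is to work throughout modulo $m_R^N$: after each pass we have $g\in\A_{\b^{\ast+}}+m_R^N$, and we may choose a new polynomial representative $g_1\equiv g\pmod{m_R^N}$ built from monomials of value $\ge\b^{\ast+}$. Since the possible monomial values of a polynomial of degree less than $N$ lie in the finite set $\{a\b_0+b\b_1+c\g_1:a+b+c<N\}\cap\F_R$ and the minimum such value strictly increases at each iteration, the process terminates in finitely many steps with $g\in\A_\g+m_R^N$. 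Running this for arbitrarily large $N$ and applying Krull's intersection theorem $\bigcap_N(\A_\g+m_R^N)=\A_\g$ in the noetherian local ring $R$ then gives $g\in\A_\g$, and hence $f\in\A_\g$.
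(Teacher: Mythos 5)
Your overall strategy is sound and genuinely different from the paper's. The paper avoids any induction on values: for $f\in I_\g$ it takes $\s$ to be the \emph{maximal} element of the (finite, because bounded) set $\{\a\in\F_R\mid f\in\A_\a\}$, writes a presentation of $f$ in $\A_\s$ with coefficients $g_e\in R$ rather than in $k$, collects the monomials of value $\s$ via Lemma~\ref{reduced} and Proposition~\ref{uniqueness} into a single term $g\cdot M$ with $M$ the unique irreducible monomial of value $\s$, and then runs a unit/non-unit dichotomy on $g$: if $g\in m_R$ then $f\in\A^+_\s$, contradicting maximality of $\s$; if $g$ is a unit then $\n(f)=\s\ge\g$ and $f\in\A_\s\subset\A_\g$. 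Your reduction to polynomial representatives modulo $m_R^N$ is legitimate ($R$ is regular with coefficient field $k$, and $m_R^N\subset\A_\g$ for large $N$ --- which incidentally makes the final appeal to Krull's intersection theorem unnecessary, since $\A_\g+m_R^N=\A_\g$); it lets you work with $k$-coefficients and replaces the maximality trick by an ascending induction on the minimal value. Both arguments run on the same engine, namely Lemma~\ref{reduced} together with the uniqueness of the irreducible monomial of a given value.

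Two points need repair. First, your termination argument is wrong as stated: after one pass the error term $F'$ lies in $\A^+_{\b^{\ast}}$, so the fresh representative is a combination of monomials $x^ay^bz^c\prod_jP_j^{n_j}\prod_jT_j^{l_j}$ involving jumping polynomials of arbitrarily high index, and their values do \emph{not} lie in the set $\{a\b_0+b\b_1+c\g_1\mid a+b+c<N\}$. What saves you is that the successive minimal values form a strictly increasing sequence in $\F_R\cap[0,\g)$, which is finite --- the same finiteness of bounded subsets of $\F_R$ that the paper invokes for the set $\W$ and in property (5); cite that instead. Second, the passage from $g\in\A_{\b^{\ast+}}+m_R^N$ to ``a fresh $k$-linear presentation by monomials of value $\ge\b^{\ast+}$'' deserves an explicit sentence: the generators of $\A^+_{\b^{\ast}}$ occurring in $F'$ carry coefficients in $R$, and you must truncate each such coefficient modulo $m_R^N$ to a polynomial in $x,y,z$, absorb the remainder into $\A_\g$, and only then expand and reapply Lemma~\ref{reduced}. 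With these two repairs the proof is complete.
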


\begin{proof}
We only need to check that $I_\g\subset \A_\g$ for all $\g\in\F_R$.

Let $\g\in\F_R$ and let $f\in I_\g$. We will show that $f\in\A_\g$.
First notice that if $f\in\A_\a$ for some $\a\in\F_R$ then
$\a\le\n(f)$. Thus the set $\W=\{\a\in\F_R|\,f\in\A_\a\}$ is finite
since it is bounded from above and it is nonempty since $f\in\A_0$.
We choose $\s$ to be the maximal element of $\W$. Then there exists
a presentation
$$
f=\sum_{e=1}^N g_e\prod_{j=0}^kP_j^{a_{e,j}}\prod_{j=1}^iT_j^{c_{e,j}}+f',
$$
where $g_e\in R$,  $\sum_{j=0}^k a_{e,j}\b_j+\sum_{j=1}^i c_{e,j}\g_j=\s$ for all 
$e$ and $f'\in \A^+_\s$.

We now apply Lemma \ref{reduced} to $\prod_{j=0}^kP_j^{a_{e,j}}\prod_{j=1}^iT_j^{c_{e,j}}$
for all $1\le e\le N$. We get $\prod_{j=0}^kP_j^{a_{e,j}}\prod_{j=1}^iT_j^{c_{e,j}}=
\th_e\prod_{j=0}^m P_j^{n_j}\prod_{j=1}^i T_j^{l_j}+h_e$, where $\th_e\in k\setminus\{0\}$, $h_e\in\A^+_\s$ 
and $\sum_{j=0}^m n_j\b_j+\sum_{j=1}^i l_j\g_i=\s$. Thus
$$
f=(\sum_{e=1}^N\th_e g_e)\prod_{j=0}^m P_j^{n_j}\prod_{j=1}^i T_j^{l_j}+\sum_{e=1}^N h_e
g_e+f'= g\prod_{j=0}^m P_j^{n_j}\prod_{j=1}^i T_j^{l_j}+h,
$$
where $g=(\sum_{e=1}^N\th_e g_e)$ and  $h\in\A^+_\s$. If $g\in m_R$ then 
$ g\prod_{j=0}^m P_j^{n_j}\prod_{j=1}^i T_j^{l_j}\in\A^+_0\A_\s\subset\A^+_\s$ 
and therefore $f\in\A^+_\s$.
Let $\a\in\F_R$ be such that $\A^+_\s=\A_\a$. Then $\a>\s$ and
$f\in\A_\a$, a contradiction to the choice of $\s$. So $g$ is a
unit in $R$ and $\n(g)=0$. Since $\n(g\prod_{j=0}^m P_j^{n_j}\prod_{j=1}^i T_j^{l_j})=\s$ and $\n(h)>\s$ 
we get that $\n(f)=\s$. Thus $\s\ge\g$, and so $f\in\A_\s\subset\A_\g$.
\end{proof}

\begin{theorem}\label{generating_sequence}
$\{P_i\}_{i\ge 0}\cup\{T_i\}_{i>0}$ is a generating sequence of $\n$, $\F_R=S+U$ and $\{\b_i\}_{i\ge 0}\cup\{\g_i\}_{i>0}$ generate $\F_R$ as a semigroup.
\end{theorem}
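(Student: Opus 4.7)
The plan is that essentially all the hard work is already done in Lemma \ref{i=a}, and the theorem reduces to unpacking definitions and a standard non-Archimedean argument.

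First I would prove that $\{P_i\}_{i\ge 0}\cup\{T_i\}_{i>0}$ is a generating sequence. By the very definition of the ideals $\A_\s$, the equality $I_\s=\A_\s$ asserted by Lemma \ref{i=a} says exactly that $I_\s$ is generated by the set of products $\prod_{j=0}^k P_j^{a_j}\prod_{j=1}^i T_j^{c_j}$ whose value is $\ge\s$. This matches the definition of a generating sequence recalled just before the lemma, so this part is immediate.

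Next I would deduce $\F_R\subset S+U$. Fix $\s\in\F_R$ and choose $f\in R\setminus\{0\}$ with $\n(f)=\s$. Since $f\in I_\s=\A_\s$, write
\[
f=\sum_{e=1}^N r_e\, M_e,\qquad M_e=\prod_{j=0}^{k_e}P_j^{a_{e,j}}\prod_{j=1}^{i_e}T_j^{c_{e,j}},\quad r_e\in R,\quad \n(M_e)\ge\s.
\]
Because $\n$ is a valuation, $\s=\n(f)\ge\min_e\bigl(\n(r_e)+\n(M_e)\bigr)$, so at least one term satisfies $\n(r_e)+\n(M_e)\le\s$. Combined with $\n(r_e)\ge 0$ and $\n(M_e)\ge\s$, this forces $\n(r_e)=0$ and $\n(M_e)=\s$. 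Thus $\s=\sum_j a_{e,j}\b_j+\sum_j c_{e,j}\g_j\in S+U$. The reverse inclusion $S+U\subset\F_R$ is trivial, since each $\b_i=\n(P_i)$ lies in $\F_R$ (Proposition \ref{P-degrees1} ensures $P_i\neq 0$), and each $\g_i=\n(T_i)$ lies in $\F_R$ when $T_i\neq 0$ (and equals $0\in\F_R$ otherwise by convention). Hence $\F_R=S+U$, which is precisely the assertion that $\{\b_i\}_{i\ge 0}\cup\{\g_i\}_{i>0}$ generates $\F_R$ as a semigroup.

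There is no real obstacle left at this point; the only subtlety is the non-Archimedean step used to extract a single monomial of value exactly $\s$ from an $R$-linear combination of monomials of value $\ge\s$. That step works because units of $R$ have value $0$, so a term with $\n(r_e)=0$ genuinely contributes its monomial value to $\F_R$. All of the delicate combinatorics — uniqueness of reduced expressions (Proposition \ref{uniqueness}), the rewriting of an arbitrary monomial as a unit times a reduced monomial plus higher-value error (Lemma \ref{reduced}), and the resulting ideal identification (Lemma \ref{i=a}) — have already been carried out, so the final theorem is a clean bookkeeping consequence.
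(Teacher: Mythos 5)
Your proposal is correct and follows essentially the same route as the paper, which simply observes that the theorem "follows at once from the definition of generating sequences and Lemma \ref{i=a}"; the non-Archimedean extraction of a monomial of value exactly $\s$ that you spell out is precisely the unproved "Notice that\dots" remark accompanying the paper's definition of a generating sequence. Your write-up just makes that implicit step explicit, which is fine.
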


\begin{proof}
The statement follows at once from the
definition of generating sequences and Lemma \ref{i=a}. 
\end{proof}

It is desirable to determine a minimal set of generators for $\F_R$ and to extract a minimal generating sequence for $\n$ if possible. In general $\{\b_i\}_{i\ge 0}\cup\{\g_i\}_{i>0}$ will not be a minimal set of generators for $\F_R$. One way to reduce this set is to remove dependent values: if $i>0$ we say that  $\b_i$ is dependent if $\b_i\in S_{i-1}$, we say that $\g_i$ is dependent if $\g_i\in (S_{m_i}+U_{i-1})$. For $i\ge 0$ we say that $\b_i$ (or $\g_{i+1}$) is independent if $\b_i$ (or $\g_{i+1}$) is not dependent. Then the set of all independent values  is a generating set for $\F_R$. It is not  minimal in general, see example \ref{Example2}. On the level of polynomials removing redundant jumping polynomials will lead to a subsequence  of  $\{P_i\}_{i\ge 0}\cup\{T_i\}_{i>0}$  that is still a generating sequence of $\n$. 

\begin{lemma}\label{redundant_nec}
Suppose that $i>0$ and $T_i\neq 0$ is a redundant jumping polynomial. Denote by $K$ the number of  nonzero successors of $T_i$. Then there exist $M,N\in\ZZ_{>0}$ such that
$$
T_i=\sum_{e=0}^K\th_e\prod_{j=0}^M P_j^{a_{e,j}}\prod_{j=1}^N T_j^{c_{e,j}},
$$
 where $\th_e\in k\setminus\{0\}$, $a_{e,j},c_{e,j}\in\ZZ_{\ge 0}$ for all $e,j$ and $\prod_{i=0}^M P_j^{a_{e,j}}\prod_{j=1}^N T_j^{c_{e,j}}$ is irreducible with respect to $\T$ for all $e$. Moreover, $\n(\prod_{j=0}^M P_j^{a_{0,j}}\prod_{j=1}^N T_j^{c_{0,j}})=\n(T_i)$ and $\n(\prod_{j=0}^M P_j^{a_{e-1,j}}\prod_{j=1}^N T_j^{c_{e-1,j}})<\n(\prod_{j=0}^M P_j^{a_{e,j}}\prod_{j=1}^N T_j^{c_{e,j}})$ for all $e\ge1$. 
\end{lemma}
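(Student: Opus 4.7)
The plan is to induct on the number $K$ of nonzero successors of $T_i$. The key preliminary observation is that $T_i$ being redundant and nonzero forces $\g_i=\nu(T_i)\in (S_{m_i}+U_{i-1})$, hence $s_i=1$. A direct check of the reducedness condition then forces the unique element of $\D_i$ to be $(0,\dots,0,1)$: any $\bar d\in\D_i$ has $c_i\ge1$, and the test tuple $(0,\dots,0,1)$ already realizes $\g_i\in(S_{m_i}+U_{i-1})$, so reducedness pushes the total exponent sum of $\bar d$ down to $1$. Thus $\d_i=1$, and the unique immediate successor satisfies the binomial identity
\[
T_{\bar d}\;=\;T_i\;-\;\m_{\bar d}\prod_{j=0}^{m_i}P_j^{n_{\bar d,j}}\prod_{j=1}^{i-1}T_j^{l_{\bar d,j}},
\]
in which the monomial has value $\g_i=\nu(T_i)$ and is irreducible with respect to $\T$ (Proposition~\ref{uniqueness}), while $\nu(T_{\bar d})>\g_i$ by the strict inequality inherent to the jumping polynomial construction.

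In the base case $K=0$, the only successor $T_{\bar d}$ must vanish, so the identity above expresses $T_i$ as a single $e=0$ term of value $\nu(T_i)$. In the inductive step $K\ge 1$, we have $T_{\bar d}\ne 0$, and $T_{\bar d}$ is itself redundant by the very definition of redundancy; its nonzero successors are exactly those of $T_i$ other than $T_{\bar d}$, so $T_{\bar d}$ has $K-1$ nonzero successors and the induction hypothesis applies, giving a $K$-term expansion. Substitute this expansion into the displayed identity, take $M=\max(m_i,M')$, $N=\max(i-1,N')$, and pad by zero exponents; this expresses $T_i$ as a sum of $K+1$ monomials with nonzero scalar coefficients, with the $\m_{\bar d}$-term listed as $e=0$ and the $K$ terms from the expansion of $T_{\bar d}$ listed as $e=1,\dots,K$.

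The three properties are then verified term by term. Irreducibility of the $e=0$ monomial with respect to $\T$ is immediate from Proposition~\ref{uniqueness}, and irreducibility of the $e\ge 1$ monomials is provided by the inductive hypothesis applied to $T_{\bar d}$. The $e=0$ monomial has value $\g_i=\nu(T_i)$, the $e=1$ monomial has value $\nu(T_{\bar d})>\g_i$, and the strict increase among $e\ge 1$ comes from the inductive hypothesis applied to $T_{\bar d}$.

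The main subtle point, which I expect to be the primary obstacle, is potential circularity when substituting the expansion of $T_{\bar d}$ back into the identity for $T_i$, since a priori the expansion could contain a positive power of $T_i$. This is ruled out by the irreducibility constraint with respect to $\T$: the bare monomial $T_i$ is itself a member of $\T$ (it corresponds to $\bar d=(0,\dots,0,1)\in\D_i$, where $T_i^{c_is_i}\cdot 1 = T_i$), so any monomial irreducible with respect to $\T$ has $T_i$-exponent equal to zero. Once this observation is in place the substitution is noncircular and the induction closes cleanly.
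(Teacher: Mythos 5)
Your proof is correct and follows essentially the same route as the paper's: induction on $K$, starting from the binomial identity $T_i=\m_{\bar d}\prod_{j=0}^{m_i}P_j^{n_{\bar d,j}}\prod_{j=1}^{i-1}T_j^{l_{\bar d,j}}+T_{\sum_{j=0}^{i}\d_j}$ and substituting the inductive expansion of the unique immediate successor. Your explicit verification that $\g_i\in(S_{m_i}+U_{i-1})$ forces $s_i=1$ and $\D_i=\{(0,\dots,0,1)\}$ (hence $c_is_i=1$ and the leading term is literally $T_i$), and your observation that $T_i\in\T$ so no irreducible monomial in the expansion can contain $T_i$, are correct details that the paper leaves implicit.
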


\begin{proof}
We use induction on $K$. Since $T_i\neq0$  and $T_i$ is redundant, by definition of the only immediate successor $T_{\sum_{j=0}^i\d_j}$ of $T_i$ we have
$$
T_i=\m_{\bar{d}}\prod_{j=0}^{m_i}P_j^{n_{\bar{d},j}}\prod_{j=1}^{i-1}T_j^{l_{\bar{d},j}}+T_{\sum_{j=0}^i\d_j},
$$ 
where $\m_{\bar{d}}\in k\setminus\{0\}$, $\prod_{j=0}^{m_i}P_j^{n_{\bar{d},j}}\prod_{j=1}^{i-1}T_j^{l_{\bar{d},j}}$ is irreducible with respect to $\T$ and $\n(T_i)=\n(\prod_{j=0}^{m_i}P_j^{n_{\bar{d},j}}\prod_{j=1}^{i-1}T_j^{l_{\bar{d},j}})$ while $\n(T_i)<\n(T_{\sum_{j=0}^i\d_j})$.

If $K=0$ then $T_{\sum_{j=0}^i\d_j}=0$ and the above representation satisfies the statement of the lemma. If $K>0$ then $T_{\sum_{j=0}^i\d_j}$ is a redundant jumping polynomial with $K-1$ nonzero successors. So by the inductive hypothesis 
$$
T_i=\m_{\bar{d}}\prod_{j=0}^{m_i}P_j^{n_{\bar{d},j}}\prod_{j=1}^{i-1}T_j^{l_{\bar{d},j}}+\sum_{e=0}^{K-1}\th'_e\prod_{j=0}^{M'} P_j^{a'_{e,j}}\prod_{j=1}^{N'} T_j^{c'_{e,j}},
$$
where $\th'_e\in\ k\setminus\{0\}$, $a'_{e,j},c'_{e,j}\in\ZZ_{\ge 0}$ for all $e,j$, $\prod_{i=0}^{M'} P_j^{a'_{e,j}}\prod_{j=1}^{N'} T_j^{c'_{e,j}}$ is irreducible with respect to $\T$ for all $e$ and $\n(T_i)<\n(T_{\sum_{j=0}^i\d_j})\le\n(\prod_{j=0}^{M'} P_j^{a'_{e-1,j}}\prod_{j=1}^{N'} T_j^{c'_{e-1,j}})<\n(\prod_{j=0}^{M'} P_j^{a'_{e,j}}\prod_{j=1}^{N'} T_j^{c'_{e,j}})$ for all $e\ge1$. Thus after appropriately renaming the indices, exponents  and coefficients we obtain the required representation. 
\end{proof}
 
We now refine the notion of $\A$-ideals by using only jumping polynomials that are not redundant. Set $\R=\{j\mid T_j \text{ is redundant}\}$. For $\s\in\F_R$ denote by $\tilde{\A}_\s$ the ideal of $R$ generated by the set
$$
\{\prod_{j=0}^k P_j^{a_j}\prod_{j=1}^iT_j^{c_j}\mid 
i,k,a_j,c_j\in\ZZ_{\ge 0};\ \n(\prod_{j=0}^k P_j^{a_j}\prod_{j=1}^i{T}_j^{c_j})\ge\s;\ c_j=0 \text { for all } j\in\R\}.
$$ 

\begin{corollary}\label{a=a}
If $\s\in\F_R$ then $\tilde{A}_{\s}=\A_{\s}.$
\end{corollary}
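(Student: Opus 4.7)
The inclusion $\tilde{\A}_\s \subset \A_\s$ is immediate: every generator of $\tilde{\A}_\s$ is already a generator of $\A_\s$. For the reverse inclusion, my plan is to show each monomial generator $f=\prod_{j=0}^k P_j^{a_j}\prod_{j=1}^iT_j^{c_j}$ of $\A_\s$ lies in $\tilde{\A}_\s$, by induction on $N(f):=\#\{j\in\R : c_j>0\}$, the number of distinct redundant indices present in $f$. Lemma \ref{redundant_nec} will be the tool that eliminates one such index per inductive step.

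The engine of the induction is the preliminary observation that if $T_j\neq 0$ is redundant, then the monomial $T_j$ itself lies in $\T$. Indeed, redundancy forces $\g_j\in S_{m_j}+U_{j-1}$, so $s_j=1$, and the tuple $\bar d=(0,\dots,0,1)$ (with only the last entry $c_j=1$ nonzero) satisfies the reducedness condition defining $\D_j$ vacuously; its associated product $\prod P^0\prod T^0 = 1$ is trivially irreducible with respect to $\T_j$, so $\bar d\in \D_j$, and the corresponding element $T_j^{c_j s_j}\prod P^0\prod T^0 = T_j$ belongs to $\T_{j+1}\subset\T$. It follows at once that any monomial irreducible with respect to $\T$ can have no $T_j$-factor for any nonzero redundant $T_j$.

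The inductive step is then straightforward. Assuming $N(f)>0$, pick $j_0\in\R$ with $c_{j_0}>0$; if $T_{j_0}=0$ then $f=0\in\tilde{\A}_\s$, so we may assume $T_{j_0}\neq 0$. By Lemma \ref{redundant_nec}, $T_{j_0}=\sum_{e=0}^K\theta_e M_e$ with each $M_e$ irreducible with respect to $\T$ and of value $\geq\n(T_{j_0})$, so by the preliminary observation no $M_e$ carries any redundant-$T$ factor. Writing $f=T_{j_0}^{c_{j_0}}h$ with $h=\prod P^{a_j}\prod_{j\neq j_0}T_j^{c_j}$ and expanding $T_{j_0}^{c_{j_0}}$ via the multinomial formula expresses $f$ as a sum of monomials in the $P$'s and $T$'s. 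Each such monomial has value $\geq c_{j_0}\n(T_{j_0})+\n(h)=\n(f)\geq\s$, and its set of redundant indices with positive exponent is contained in $\{j\in\R : c_j>0\text{ in }f\}\setminus\{j_0\}$, hence has cardinality strictly less than $N(f)$. The induction hypothesis then places each term in $\tilde{\A}_\s$, whence $f\in\tilde{\A}_\s$.

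The main obstacle is cleanly establishing the preliminary observation: one must check that redundancy forces $s_j=1$ (immediate from $\g_j\in S_{m_j}+U_{j-1}$) and that the unit tuple $(0,\dots,0,1)$ meets both the reducedness clause and the irreducibility clause in the definition of $\D_j$. Once that is in place the remainder is routine bookkeeping through the multinomial expansion of $T_{j_0}^{c_{j_0}}$.
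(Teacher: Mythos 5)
Your proof is correct and follows essentially the same route as the paper's: both rest on Lemma \ref{redundant_nec} together with the observation that a monomial irreducible with respect to $\T$ carries no factor $T_j$ with $j\in\R$. The only organizational difference is that the paper uses $\tilde{\A}_{\s_1}\tilde{\A}_{\s_2}\subset\tilde{\A}_{\s_1+\s_2}$ to reduce to showing $T_i\in\tilde{\A}_{\g_i}$ for $i\in\R$ and then applies the lemma once, whereas you induct on the number of redundant factors and expand multinomially; your careful verification that a nonzero redundant $T_j$ itself lies in $\T$ (via $s_j=1$ and $(0,\dots,0,1)\in\D_j$) is precisely the fact the paper uses implicitly when it asserts that $c_{e,j}=0$ for all $j\in\R$.
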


\begin{proof}
By construction $\tilde{\A}_{\s}\subset \A_{\s}$. To show that $\A_{\s}\subset\tilde{\A}_{\s}$ we will check that any generator $\prod_{j=0}^k P_j^{a_j}\prod_{j=1}^i T_j^{c_j}$ of $\A_{\s}$ belongs to $\tilde{\A}_{\s}$. Since $\tilde{\A}_{\s_1}\tilde{\A}_{\s_2}\subset\tilde{\A}_{\s_1+\s_2}$ for all $\s_1,\s_2,\in\F_R$ it is suffices to check that $P_i\in \tilde{\A}_{\b_i}$ and $T_i\in \tilde{\A}_{\g_i}$ for all $i$. By construction we have $P_i\in\tilde{\A}_{\b_i}$ for all $i\in \ZZ_{\ge 0}$ and $T_i\in\tilde{\A}_{\g_i}$ for all $i\in\ZZ_{>0}\setminus\R$. 

Assume that $i\in\R$ and use Lemma \ref{redundant_nec} to write
$$T_i=\sum_{e=0}^K\th_e\prod_{j=0}^M P_j^{a_{e,j}}\prod_{j=1}^N T_j^{c_{e,j}}.$$ 
For all $e\ge 0$, since $\prod_{j=0}^M P_j^{a_{e,j}}\prod_{j=1}^N T_j^{c_{e,j}}$ is irreducible with respect to $\T$ we have $c_{e,j}=0$ for all $j\in\R$. Also for all $e\ge 0$, we have $\n(\prod_{j=0}^M P_j^{a_{e,j}}\prod_{j=1}^N T_j^{c_{e,j}})\ge\g_i$. Thus $T_i\in\tilde{\A}_{\g_i}$. 
\end{proof}

The next statement follows immediately from Lemma \ref{i=a}, Corollary \ref{a=a} and  definition of generating sequences.

\begin{proposition}\label{reduced_generating_sequence}
$\{P_i\}_{i\in\ZZ_{\ge 0}}\cup\{T_i\}_{i\in\ZZ_{>0}\setminus\R}$ is a generating sequence of $\n$.
\end{proposition}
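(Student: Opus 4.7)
The plan is to read the conclusion directly off of the preceding two results together with the unpacked definition of a generating sequence; no further technical input is required.

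First, by Lemma \ref{i=a}, for every value $\s \in \F_R$ we have the equality $I_\s = \A_\s$. Corollary \ref{a=a} then upgrades this to $I_\s = \A_\s = \tilde{\A}_\s$, where by construction $\tilde{\A}_\s$ is generated by those monomials $\prod_{j=0}^k P_j^{a_j}\prod_{j=1}^i T_j^{c_j}$ of value at least $\s$ for which $c_j = 0$ whenever $j \in \R$. Renaming the generators, the generating set of $\tilde{\A}_\s$ is exactly
\[
\bigl\{\,M \,\bigm|\, M = \prod_{j\ge 0} P_j^{a_j}\prod_{j\in\ZZ_{>0}\setminus\R} T_j^{c_j},\ a_j,c_j\in\ZZ_{\ge 0},\ \n(M)\ge\s\,\bigr\},
\]
which is precisely the generating set required for $\{P_i\}_{i\ge 0}\cup\{T_i\}_{i\in\ZZ_{>0}\setminus\R}$ to be a generating sequence of $\n$ in $R$.

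So the proof will consist of one or two sentences chaining Lemma \ref{i=a}, Corollary \ref{a=a}, and the definition of a generating sequence (recalled just before Lemma \ref{reduced}). There is no obstacle to overcome here: all the real work has already been carried out, namely Lemma \ref{redundant_nec}, which rewrites each redundant $T_i$ as a $k$-linear combination of monomials in the non-redundant jumping polynomials of value $\ge \g_i = \n(T_i)$, and Corollary \ref{a=a}, which uses Lemma \ref{redundant_nec} together with the multiplicativity property $\tilde{\A}_{\s_1}\tilde{\A}_{\s_2}\subset \tilde{\A}_{\s_1+\s_2}$ to conclude $\A_\s \subset \tilde{\A}_\s$.
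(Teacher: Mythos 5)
Your proposal is correct and matches the paper's argument exactly: the paper also deduces the proposition immediately from Lemma \ref{i=a}, Corollary \ref{a=a}, and the definition of a generating sequence. Your additional remarks about where the real work lies (Lemma \ref{redundant_nec} and the multiplicativity of the $\tilde{\A}$-ideals) are accurate.
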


We now provide a sufficient condition for a jumping polynomial to be redundant. It will allow us to recognize redundant jumping polynomials in Example \ref{Example2}.  
\begin{lemma}\label{redundant_suf}
Suppose that $i,K,M>0$ and $T_i=\sum_{e=1}^K\th_e\prod_{j=0}^M P_j^{a_{e,j}}\prod_{j=1}^{i-1}T_j^{c_{e,j}}\neq 0$, with $\th_e\in k$, $a_{e,j},c_{e,j}\in\ZZ_{\ge 0}$ for all $e,j$ and $\prod_{j=0}^M P_j^{a_{e,j}}\prod_{j=1}^{i-1}T_j^{c_{e,j}}$ irreducible with respect to $\T$ for all $e$.  Then $T_i$ is redundant.
\end{lemma}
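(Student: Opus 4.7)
The plan is to argue by induction on $K$, the number of terms in the given representation of $T_i$. First I would note that, after combining duplicate monomials if necessary, the $K$ monomials are pairwise distinct and irreducible with respect to $\T$, so by Corollary \ref{order} their values are pairwise distinct. Consequently $\n(T_i)=\g_i$ is attained by a unique leading term, indexed by some $e_0$, giving
\[
\g_i=\sum_{j=0}^M a_{e_0,j}\b_j+\sum_{j=1}^{i-1}c_{e_0,j}\g_j\in S_M+U_{i-1}\subset H_{i-1}+G.
\]
This already forces $s_i=1$. Applying Corollary \ref{mi-bound} to the leading monomial (using its actual maximal $P$-index $M_0$, with the case of no positive $P$-index handled trivially) then upgrades this to $M_0\le m_i$ and hence to $\g_i\in S_{M_0}+U_{i-1}\subset S_{m_i}+U_{i-1}$, which is the first clause of the definition of redundancy.

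Next I would verify $\d_i=1$. Since $\g_i\in S_{m_i}+U_{i-1}$, the tuple $(0,\dots,0,1)$ is reduced with respect to $(S_{m_i}+U_{i-1},s_i\g_i)$ and its associated monomial is trivially irreducible w.r.t.\ $\T_i$, so it lies in $\D_i$. Conversely, any tuple in $\D_i$ with $c_i\ge 2$, or with $c_i\ge 1$ together with some other positive component, fails reducedness by the test with $b=1$ and all other components zero (using $\g_i\in S_{m_i}+U_{i-1}$). Hence $\D_i=\{(0,\dots,0,1)\}$, and the unique immediate successor is
\[
T_{\sum_{j=0}^i\d_j}=T_i-\m_{\bar d}\prod_{j=0}^{m_i}P_j^{n_{\bar d,j}}\prod_{j=1}^{i-1}T_j^{l_{\bar d,j}}.
\]
The uniqueness statement in Proposition \ref{uniqueness} applied to $\g_i$ identifies the subtracted monomial with the leading monomial $\prod P_j^{a_{e_0,j}}\prod T_j^{c_{e_0,j}}$, and computing residues in $V/m_V$ gives $\m_{\bar d}=\th_{e_0}$. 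Hence the leading term cancels and
\[
T_{\sum_{j=0}^i\d_j}=\sum_{e\ne e_0}\th_e\prod_{j=0}^M P_j^{a_{e,j}}\prod_{j=1}^{i-1}T_j^{c_{e,j}}
\]
is a sum of $K-1$ monomials, each still irreducible w.r.t.\ $\T$, which is again a representation of the form hypothesized in the lemma, now with $\sum_{j=0}^i\d_j>i$ in place of $i$.

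The induction then closes immediately. If $K=1$ the immediate successor is $0$, hence redundant by definition, and all three clauses of the definition of redundancy hold for $T_i$. If $K>1$ the inductive hypothesis gives that the unique immediate successor is redundant; then $T_i$ has finitely many nonzero successors (those of the immediate successor together with the successor itself), its only immediate successor is redundant, and $\g_i\in S_{m_i}+U_{i-1}$, so $T_i$ is redundant. The main obstacle will be establishing the semigroup membership $\g_i\in S_{m_i}+U_{i-1}$ from the weaker group-level definition of $m_i$; this is resolved precisely by Corollary \ref{mi-bound}, which converts group-level information about $\g_i$ into a bound on the maximal $P$-index of the irreducible leading monomial.
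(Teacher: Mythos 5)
Your proposal is correct and follows essentially the same route as the paper's proof: induction on $K$ after collecting like terms and ordering by value (Corollary \ref{order}), deducing $s_i=1$ and then $\g_i\in(S_{m_i}+U_{i-1})$ via Corollary \ref{mi-bound} applied to the minimal-value monomial, identifying the unique immediate successor as $T_i$ minus that leading term, and closing the induction. The extra details you supply — the explicit verification that $\D_i=\{(0,\dots,0,1)\}$ and the identification of the subtracted monomial via Proposition \ref{uniqueness} — are steps the paper leaves implicit, and they are handled correctly.
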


\begin{proof} 
After possibly collecting like terms we may assume that  in the presentation
$$
T_i=\sum_{e=1}^K\th_e\prod_{i=0}^M P_j^{a_{e,j}}\prod_{j=1}^{i-1}T_j^{c_{e,j}}
$$
 $(a_{e_1,0},\dots,a_{e_1,M},c_{e_1,1},\dots,c_{e_1,i-1})\neq(a_{e_2,0},\dots,a_{e_2,M},c_{e_2,1},\dots,c_{e_2,i-1})$ for all $e_1\neq e_2$and $\th_e\neq 0$ for all $e$. Then by Corollary \ref{order}, after possibly rearranging terms, we may further assume that $\n(\prod_{i=0}^M P_j^{a_{e-1,j}}\prod_{j=1}^{i-1}T_j^{c_{e-1,j}})<\n(\prod_{i=0}^M P_j^{a_{e,j}}\prod_{j=1}^{i-1}T_j^{c_{e,j}})$ for all $e>1$ and $\n(\prod_{i=0}^M P_j^{a_{1,j}}\prod_{j=1}^{i-1}T_j^{c_{1,j}})=\g_i$. In particular, $\g_i\in(S_M+U_{i-1})\subset(G+H_{i-1})$.
 
By construction of jumping polynomials we have  $s_i=1$ and $\g_i\in(G_{m_i}+H_{i-1})$. Denote by $M_1=\max\{j\mid a_{1,j}>0\}$ and apply Corollary \ref{mi-bound} to $\g_i=\sum_{j=0}^{M_1}a_{1,j}\b_j+\sum_{j=1}^{i-1}c_{1,j}\g_j$ to get $M_1\le m_i$. Thus $\g_i\in(S_{m_i}+U_{i-1})$. 
Also, the only immediate successor of  $T_i$ is $T_{\sum_{j=0}^i\d_i}=T_i-\th_1\prod_{i=0}^{m_i} P_j^{a_{1,j}}\prod_{j=1}^{i-1}T_j^{c_{1,j}}$.
 
 We now use induction on $K$. If $K=1$ then  $T_{\sum_{j=0}^i\d_i}=0$ and, therefore, $T_i$ is redundant. If $K>1$ then  $T_{\sum_{j=0}^i\d_i}= \sum_{e=2}^K\th_e\prod_{i=0}^M P_j^{a_{e,j}}\prod_{j=1}^{i-1}T_j^{c_{e,j}}$. By the inductive hypothesis $T_{\sum_{j=0}^i\d_i}$ is redundant  and, therefore, $T_i$ is redundant. 
\end{proof}


\section{Defining sequences}\label{numerical data}

In this section we describe numerical data that uniquely determines a 
valuation on $k(x,y,z)$ centered at $k[x,y,z]_{(x,y,z)}$.

Given sequences of positive rational numbers $\{\b'_i\}_{i\ge 0}$ and 
$\{\bar{\g}_i\}_{i>0}$ we use notation as in section \ref{preliminaries}. Let $\bar{m}_0=0$, $G'_0=\b'_0\ZZ$, $\bar{H}_0=\{0\}$ and for all $i> 0$ set 
\begin{align*}
S'_i=\sum_{j=0}^i\b'_j\ZZ_{\ge 0},\quad G'_i=\sum_{j=0}^i\b'_j\ZZ\quad&\text{ and }\quad S'=\bigcup_{j=0}^{\infty}S'_j,\quad G'=\bigcup_{j=0}^{\infty}G'_j\\
\bar{U}_i=\sum_{j=1}^i \bar{\g}_j\ZZ_{\ge 0},\quad \bar{H}_i=\sum_{j=1}^i \bar{\g}_j\ZZ\quad &\text{ and }\quad \bar{U}=\bigcup_{j=1}^{\infty}\bar{U}_j,\quad \bar{H}=\bigcup_{j=1}^{\infty}\bar{H}_j\\
q'_i=\min\{q\in\ZZ_{>0}\mid q\b'_i\in G'_{i-1}\},\quad\quad &\; \bar{s}_i=\min\{s\in\ZZ_{>0}\mid s\bar{\g}_i\in(\bar{H}_{i-1}+G')\}\\
 \text {and }\quad \bar{m}_i =\max(\bar{m}_{i-1},  \min &\{j\in\ZZ_{\ge 0}\mid  \bar{s}_i\bar{\g}_i\in(\bar{H}_{i-1}+G'_j)\}).
\end{align*}  

Applying Lemma \ref{group representation} to $\a=q'_i\b'_i$, an element of $G'_{i-1}$ we find $n'_{i,0}\in\ZZ$ and nonnegative integers $n'_{i,1},\dots,n'_{i,i-1}$ such that $n'_{i,j}<q'_j$ for all $j>0$ and $\a=\sum_{j=0}^{i-1}n'_{i,j}\b'_j$.  Applying Lemma \ref{group representation} to $\a=\bar{s}_i\bar{\g}_i$, an element of $G'_{\bar{m}_i}+\bar{H}_{i-1}$, we find $a_i\in\ZZ$ and nonnegative integers $\bar{n}_{i,1},\dots,\bar{n}_{i,\bar{m}_i},\bar{l}_{i,1},\dots,\bar{l}_{i,i-1}$ such that $\bar{n}_{i,j}<q'_j$ and $\bar{l}_{i,j}<\bar{s}_j$ for all $j>0$ and
$\a=a_i\b'_0+\sum_{j=1}^{\bar{m}_i}\bar{n}_{i,j}\b'_j+\sum_{j=1}^{i-1}\bar{l}_{i,j}\bar{\g}_j$. We set $\bar{n}_{i,0}=\max(0,a_i)$ and $\bar{r}_{i,0}=\max(0,-a_i)$.

The conditions we require sequences $\{\b'_i\}_{i\ge 0}$ and 
$\{\bar{\g}_i\}_{i>0}$ to satisfy are

$$
\b'_{i+1}>q'_i\b'_i,
$$
$$
\bar{\g}_{i+1}>\bar{r}_{i,0}\b'_0+\bar{s}_i\bar{\g}_i.
$$
Then by Corollary \ref{positivity1} we have $n'_{i,0}>0$  for all $i>0$. 

Given sequences of residues $\{\l'_i\}_{i>0}\subset k\setminus\{0\}$ and
$\{\bar{\m}_i\}_{i>0}\subset k\setminus\{0\}$ we inductively define the sequences of polynomials
$\{P'_i\}_{i\ge 0}$ and $\{Q_i\}_{i> 0}$ by setting
\begin{align*} 
&P'_0 =x,\quad\; P'_1=y,\quad\; Q_1=z,\\
&P'_{i+1} =(P'_i)^{q'_i}-\l'_i\prod_{j=0}^{i-1}(P'_j)^{n'_{i,j}},\\
&Q_{i+1} =x^{\bar{r}_{i,0}}Q_i^{\bar{s}_i}-\bar{\m}_i\prod_{j=0}^{\bar{m}_i}
(P'_j)^{\bar{n}_{i,j}}\prod_{j=1}^{i-1}Q_j^{\bar{l}_{i,j}}.
\end{align*}

Our main statement to be proved in section \ref{properties2} is

\begin{theorem}\label{main}
In the above notation assuming that infinitely many $q'_i$ and $\bar{s}_i$ are greater than 1 there exists a unique valuation $\nu: k(x,y,z)\ra \QQ$ such that $\nu$ dominates $k[x,y,z]_{(x,y,z)}$ and $\nu(P'_i)=\b'_i$ and $\nu(Q_i)=\bar{\g}_i$ for all $i$.
\end{theorem}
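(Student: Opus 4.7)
The plan is to construct $\n$ directly on $k[x,y,z]$ via a normal-form expansion of polynomials in the defining polynomials $\{P'_i\}$ and $\{Q_i\}$, then read off the value from the unique dominant monomial. This inverts the logic of Sections \ref{construction}--\ref{properties1}: there a generating sequence was extracted from a given valuation; here the defining polynomials are prescribed and are used to build the valuation.

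First I would establish a normal form: every nonzero $f\in k[x,y,z]$ admits a finite $k$-linear expansion
$$
f=\sum_e\th_e\prod_{j=0}^{N}(P'_j)^{a_{e,j}}\prod_{j=1}^{M}Q_j^{c_{e,j}},\qquad \th_e\in k\setminus\{0\},
$$
in which each monomial satisfies $a_{e,j}<q'_j$ for $j>0$ and $c_{e,j}<\bar s_j$ for $j>0$. Such an expansion is produced by iterated Euclidean division using the defining identities: replace $(P'_i)^{q'_i}$ by $\l'_i\prod_{j<i}(P'_j)^{n'_{i,j}}+P'_{i+1}$, analogously replace $x^{\bar r_{i,0}}Q_i^{\bar s_i}$, and iterate. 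Termination is clear since the total degree in $x,y,z$ is bounded. By Corollary \ref{order1}, distinct irreducible monomials carry distinct formal values $\sum a_{e,j}\b'_j+\sum c_{e,j}\bar\g_j$, so among the monomials appearing with nonzero coefficient in a normal-form expansion of $f$ there is a unique one of smallest formal value; define $\n(f)$ to be this minimum.

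Next I would verify the valuation axioms. Well-definedness (independence of the expansion) and the ultrametric inequality $\n(f+g)\ge\min(\n(f),\n(g))$ both reduce to the following key fact: the growth conditions $\b'_{i+1}>q'_i\b'_i$ and $\bar\g_{i+1}>\bar r_{i,0}\b'_0+\bar s_i\bar\g_i$ guarantee that each reduction step replaces the monomial being rewritten by one summand of the same formal value (with nonzero coefficient $\l'_i$ or $\bar\m_i$) together with one of strictly larger formal value. Multiplicativity $\n(fg)=\n(f)+\n(g)$ follows by multiplying normal-form expansions and re-reducing: the dominant monomial of $fg$ is the product of the dominant monomials of $f$ and $g$, and the further reductions needed only contribute tails of higher formal value. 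Extend $\n$ to $k(x,y,z)$ by $\n(f/g)=\n(f)-\n(g)$; domination of $k[x,y,z]_{(x,y,z)}$ is immediate from $\b'_0,\b'_1,\bar\g_1>0$, and the prescribed values $\n(P'_i)=\b'_i$ and $\n(Q_i)=\bar\g_i$ are built into the construction. For uniqueness, any other valuation $\n'$ with the same prescribed values must, by the ultrametric inequality applied to the normal-form expansion of $f$, satisfy $\n'(f)\ge\min_e(\sum a_{e,j}\b'_j+\sum c_{e,j}\bar\g_j)$; Corollary \ref{order1} forces the minimum to be attained by exactly one summand, producing equality and hence $\n'(f)=\n(f)$.

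The principal obstacle is the joint verification of well-definedness and multiplicativity, which amounts to showing that the sequence of binomial substitutions relating two different normal-form expansions of the same polynomial (or the reductions required to renormalize a product) never perturbs the formal value of the dominant term. This is a bookkeeping induction driven entirely by the two growth hypotheses, the positivity $n'_{i,0}>0$ supplied by Corollary \ref{positivity1}, and the nonvanishing of $\l'_i$ and $\bar\m_i$. The hypothesis that infinitely many $q'_i$ and $\bar s_i$ exceed $1$ is used to rule out degeneracy at the boundary: without it the defining sequence effectively terminates, and then the extension to $k(x,y,z)$ fails to be unique, since distinct valuations can extend a truncated construction through algebraic data not encoded in the $\b'_i$ and $\bar\g_i$.
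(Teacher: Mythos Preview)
Your overall strategy matches the paper's: build a normal-form expansion in the defining polynomials, declare $\n(f)$ to be the minimum formal value appearing, verify multiplicativity by tracking the dominant monomial through reductions, and deduce uniqueness from Corollary \ref{order1}. There is, however, one concrete gap. The normal form you describe, with nonnegative exponent on $P'_0=x$ and coefficients in $k$, does not exist in general. The defining relation reads $x^{\bar r_{i,0}}Q_i^{\bar s_i}=\bar\m_i\prod(P'_j)^{\bar n_{i,j}}\prod Q_j^{\bar l_{i,j}}+Q_{i+1}$, so eliminating $Q_i^{\bar s_i}$ forces a factor $x^{-\bar r_{i,0}}$; equivalently, $Q_i$ is not monic in $z$ over $k[x,y]$ (its leading coefficient is a power of $x$, Proposition \ref{Q-degrees-intermediate}), so Euclidean division only works after inverting $x$. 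The paper handles this by proving the normal form for $x^Kf$ rather than for $f$ (Corollary \ref{lemma2}, Theorem \ref{theorem1}) and setting $\n(f)=-K\b'_0+\min(\dots)$. Without this device your rewriting rule ``replace $x^{\bar r_{i,0}}Q_i^{\bar s_i}$'' simply fails to fire on a monomial like $z^{\bar s_1}$ when $\bar r_{1,0}>0$, so the reduction gets stuck; and once negative powers of $x$ are admitted, the domination claim ``immediate from $\b'_0,\b'_1,\bar\g_1>0$'' needs a separate argument (the paper's Lemma \ref{dominate}, which bypasses the normal form entirely).

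You correctly flag multiplicativity and well-definedness as the principal obstacle, but your sketch underestimates what is required. The paper does not argue via confluence of rewriting; instead it proves uniqueness of the $(M,N)$-expansion directly by iterated Euclidean division (Theorem \ref{theorem1}), which gives well-definedness for free. For multiplicativity, the assertion that reducing a product of irreducible monomials yields ``one summand of the same formal value together with one of strictly larger formal value'' is the content of Lemmas \ref{induction base} and \ref{key}: a nested induction on $\deg_y$ and $\deg_z$ showing that the expansion of a single monomial has a \emph{unique} term of minimal value, and moreover tracking its exponents on $P_M$ and $Q_N$ so that in Corollary \ref{M,N-valuation-intermidiate} one can rule out cancellation when the reductions of all cross-terms are summed. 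This is where the growth hypotheses $\b'_{i+1}>q'_i\b'_i$ and $\bar\g_{i+1}>\bar r_{i,0}\b'_0+\bar s_i\bar\g_i$ do their real work, and it is the bulk of the proof.
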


If $\n$ is a valuation on $k[x,y,z]_{(x,y,z)}$ as provided by Theorem \ref{main} we claim that the sequence of jumping polynomials $\{P_i\}_{i\ge 0}\cup\{T_i\}_{i>0}$ for $\n$ as defined in section \ref{construction} satisfies $P_i=P'_i$ for all $i\ge 0$. Indeed, we have $P_0=x=P'_0$ and $P_1=y=P'_1$. Fix $i\in\ZZ_{>0}$ and assume that $P_j=P'_j$ for all $j\le i$.  Then $\b_j=\b'_j$, $G_j=G'_j$ and $q_j=q'_j$ for all $j\le i$. Due to uniqueness of $n_{i,0},\dots,n_{i,i-1}$ such that $q'_i\b'_i=q_i\b_i=\sum_{j=0}^{i-1}n_{i,j}\b_{i,j}$ and $n_{i,j}<q_j$ for all $j>0$ we also have $n_{i,j}=n'_{i,j}$ for all $j<i$.  Then $P_i^{q_i}/\prod_{j=0}^{i-1}P_j^{n_{i,j}}=(P'_i)^{q'_i}/\prod_{j=0}^{i-1}(P'_j)^{n'_{i,j}}$ and therefore, $\l_i=\l'_i$. Thus 
$$
P_{i+1}=P_i^{q_i}-\l_i\prod_{j=1}^{i-1}P_j^{n_{i,j}}=(P'_i)^{q'_i}-\l'_i\prod_{j=1}^{i-1}(P'_j)^{n'_{i,j}}=P'_{i+1}.
$$
We will now drop apostrophes in the notation for defining polynomials.


\section{Properties of defining polynomials}\label{properties2}

The goal of this section is to prove Theorem \ref{main}. We use the simplified notation of section \ref{numerical data}. In particular, if $i\ge 0$ then $P_i$ denotes a defining polynomial. Also, we use lexicographical order to compare $k$-tuples of integers: we say $(a_1,\dots,a_k)<(b_1,\dots,b_k)$ if and only if there exists $l<k$ such that $a_i=b_i$ for all $i\le l$ and $a_{l+1}<b_{l+1}$. 
 
\begin{proposition}\label{P-degrees} 
Suppose that $A=k[x]$ and $i\ge 1$ then $P_i$ is a monic polynomial in $A[y]$ and $\deg_y P_i=\prod_{j=1}^{i-1}q_j$. Moreover, if $a_1,a_2,\dots, a_{i-1}$ are nonnegative integers such that $a_j<q_j$ for all $j<i$ then $\deg_y(\prod_{j=1}^{i-1}P_j^{a_j})<\deg_y P_i$.
\end{proposition}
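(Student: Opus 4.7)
The plan is to prove both assertions simultaneously by induction on $i$, following essentially the same pattern as Proposition \ref{P-degrees1} but with a slightly sharpened bookkeeping that yields the ``moreover'' statement as a clean telescoping inequality.

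For the base case $i=1$, we have $P_1=y$, which is monic in $y$ of degree $1 = \prod_{j=1}^{0} q_j$ (empty product), and the second assertion holds vacuously (empty product of $P_j$'s has $y$-degree $0$).

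For the inductive step, assume both statements hold for all indices up to $i$. The defining recursion is
\begin{equation*}
P_{i+1} = P_i^{q_i} - \lambda_i \prod_{j=0}^{i-1} P_j^{n_{i,j}},
\end{equation*}
where by construction $n_{i,j} < q_j$ for all $j \ge 1$. The first term $P_i^{q_i}$ has $y$-degree $q_i \cdot \prod_{j=1}^{i-1} q_j = \prod_{j=1}^{i} q_j$ and leading $y$-coefficient $(\lead_y P_i)^{q_i} = 1$ by the inductive hypothesis. For the second term, the factor $P_0^{n_{i,0}} = x^{n_{i,0}}$ contributes nothing to the $y$-degree, so we apply the ``moreover'' part of the inductive hypothesis to the exponents $n_{i,1},\dots,n_{i,i-1}$ to conclude
\begin{equation*}
\deg_y \prod_{j=0}^{i-1} P_j^{n_{i,j}} = \deg_y \prod_{j=1}^{i-1} P_j^{n_{i,j}} < \prod_{j=1}^{i-1} q_j < \prod_{j=1}^{i} q_j.
\end{equation*}
Hence the leading $y$-term of $P_i^{q_i}$ survives, so $P_{i+1}$ is monic in $y$ of degree $\prod_{j=1}^{i} q_j$.

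Finally, for the ``moreover'' assertion at stage $i+1$, given $a_1,\dots,a_i$ with $a_j < q_j$, the inductive hypothesis gives $\deg_y P_j = \prod_{k=1}^{j-1} q_k$, so a telescoping computation yields
\begin{equation*}
\deg_y \prod_{j=1}^{i} P_j^{a_j} = \sum_{j=1}^{i} a_j \prod_{k=1}^{j-1} q_k \le \sum_{j=1}^{i} (q_j - 1)\prod_{k=1}^{j-1} q_k = \prod_{k=1}^{i} q_k - 1 < \deg_y P_{i+1}.
\end{equation*}
There is no real obstacle here: the only subtlety is remembering that the $P_0^{n_{i,0}}$ factor in the recursion does not affect $y$-degrees (so the condition $n_{i,0} < q_0$ is neither imposed nor needed), and that the ``moreover'' inequality at the previous stage is precisely what is needed to prevent cancellation of the leading term in the recursion.
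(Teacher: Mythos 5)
Your proof is correct and follows essentially the same route as the paper, whose proof of this proposition just invokes the induction of Proposition \ref{P-degrees1} with the identical telescoping bound $\sum_{j}(q_j-1)\prod_{k<j}q_k=\prod_j q_j-1$; folding the ``moreover'' clause into the induction hypothesis is exactly how that computation is used there. (One cosmetic point: since $q_i=1$ is not excluded by the construction, the step $\prod_{j=1}^{i-1}q_j<\prod_{j=1}^{i}q_j$ should be $\le$, but your overall conclusion is unaffected because the preceding inequality in the chain is already strict.)
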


\begin{proof}
Same argument as in the proof of Proposition \ref{P-degrees1} proves this statement. 
\end{proof}

\begin{corollary}\label{new lemma}
Suppose that the $m$-tuples of nonegative integers $(a_1,\dots,a_m)$ and $(b_1,\dots,b_m)$ are such that $a_j,b_j<q_j$ for all $j<m$. If $(a_m,\dots,a_1)<(b_m,\dots,b_1)$ then $\deg_y (\prod_{j=1}^m P_j^{a_j})<\deg_y (\prod_{j=1}^m P_j^{b_j})$. 

In particular, if
$f=\sum_{a_m=0}^q\left(\sum_{0\le a_j<q_j}f_{a_1,\dots,a_m}(x)\prod_{j=1}^{m-1} P_j^{a_j}\right)P_m^{a_m},$
 where $f_{a_1,\dots,a_m}\in k[x]$ for all $(a_1,\dots,a_m)$,  then $\deg_y (\prod_{j=1}^m P_j^{a_j})\le \deg_y f$.  
\end{corollary}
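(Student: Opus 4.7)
The plan is to read off the $y$-degree of a product $\prod_{j=1}^m P_j^{a_j}$ directly from Proposition \ref{P-degrees} and then use the hypothesis $a_j,b_j<q_j$ for $j<m$ to show that the highest-indexed differing coordinate dominates the rest in a telescoping sense.

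First I compute, using Proposition \ref{P-degrees} and the fact that each $P_j$ is monic in $y$, that
\[
\deg_y\!\Bigl(\prod_{j=1}^m P_j^{a_j}\Bigr)=\sum_{j=1}^m a_j\prod_{l=1}^{j-1}q_l,
\]
and moreover that this product is itself monic in $y$ (so no cancellation occurs inside an individual monomial). For the first assertion, let $k$ be the largest index at which $(a_1,\dots,a_m)$ and $(b_1,\dots,b_m)$ differ; by the definition of the lex order on the reversed tuples, $a_j=b_j$ for $j>k$ and $a_k<b_k$. Then
\[
\deg_y\!\Bigl(\prod_{j=1}^m P_j^{b_j}\Bigr)-\deg_y\!\Bigl(\prod_{j=1}^m P_j^{a_j}\Bigr)=(b_k-a_k)\prod_{l=1}^{k-1}q_l+\sum_{j=1}^{k-1}(b_j-a_j)\prod_{l=1}^{j-1}q_l.
\]
The first summand is at least $\prod_{l=1}^{k-1}q_l$, since $b_k>a_k$. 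For the remaining sum, the bound $|b_j-a_j|\le q_j-1$ (valid because $j<m$ ensures both $a_j,b_j<q_j$) combined with the telescoping identity $\sum_{j=1}^{k-1}(q_j-1)\prod_{l=1}^{j-1}q_l=\prod_{l=1}^{k-1}q_l-1$ shows that this remainder is bounded below by $-(\prod_{l=1}^{k-1}q_l-1)$. Adding these two estimates yields a strict positive lower bound of $1$, which proves the first statement.

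For the second statement, I write each summand $f_{a_1,\dots,a_m}(x)\prod_{j=1}^{m-1}P_j^{a_j}P_m^{a_m}$ as a polynomial in $y$ over $k[x]$. Since the $P_j$ are monic in $y$, the leading $y$-term of this summand is $f_{a_1,\dots,a_m}(x)\,y^{N(a_1,\dots,a_m)}$ with $N(a_1,\dots,a_m)=\sum_j a_j\prod_{l=1}^{j-1}q_l$. Any two distinct index tuples $(a_1,\dots,a_m)\ne(b_1,\dots,b_m)$ appearing in the sum (with $a_j,b_j<q_j$ for $j<m$) are comparable in the reverse-lex order, and part one forces $N(a_1,\dots,a_m)\ne N(b_1,\dots,b_m)$. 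Hence the leading $y$-terms of the individual summands occur in pairwise distinct $y$-degrees, so no cancellation can wipe out the largest of them, and therefore $\deg_y f$ equals the maximum of $N(a_1,\dots,a_m)$ over tuples with $f_{a_1,\dots,a_m}\ne0$. In particular $\deg_y(\prod P_j^{a_j})\le\deg_y f$ for every such tuple.

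I do not expect any real obstacle here; the statement is essentially a base-$q_j$ positional-numeral argument. The only point requiring a little care is remembering that only the coordinates $j<m$ are bounded by $q_j$, which is precisely what makes the top coordinate $k$ (when it happens to equal $m$) behave correctly — in that case the remainder sum has no $j=m$ term and the lower bound is automatic.
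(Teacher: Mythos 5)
Your proof is correct and follows essentially the same route as the paper: both identify the highest index where the tuples differ and use the fact that $\sum_{j<k}(q_j-1)\prod_{l<j}q_l=\prod_{l<k}q_l-1<\deg_y P_k$ (you re-derive this telescoping bound explicitly, whereas the paper cites it as the ``Moreover'' clause of Proposition \ref{P-degrees}), and both settle the second claim by noting that the maximal tuple in reverse-lex order contributes the unique leading term of $f$.
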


\begin{proof}
Let $(a_m,\dots,a_1)<(b_m,\dots,b_1)$ then there exists $1\le l\le m$ such that $a_l<b_l$ and $a_j=b_j$ for all $j>l$. Then 
$$\deg_y(\prod_{j=1}^m P_j^{a_j}) < \deg_y(P_l^{a_l+1}\prod_{j=l+1}^m P_j^{a_j})\le \deg_y(\prod_{j=l}^m P_j^{b_j})\le\deg_y(\prod_{j=1}^m P_j^{b_j}).
$$

Consider now 
$f=\sum_{a_m=0}^q\left(\sum_{0\le a_j<q_j}f_{a_1,\dots,a_m}(x)\prod_{j=1}^{m-1} P_j^{a_j}\right)P_m^{a_m}$. Set 
$$(b_m,\dots,b_1)=\max\{(a_m,\dots,a_1)\mid f_{a_1,\dots,a_m}\neq 0\}$$ 
Then $\deg_y(\prod_{j=1}^m P_j^{a_j})<\deg_y(\prod_{j=1}^m P_j^{b_j})$ for all $(a_1,\dots,a_m)\neq (b_1,\dots, b_m)$. Thus $\deg_y f=\deg_y (\prod_{j=1}^m P_j^{b_j})\ge \deg_y(\prod_{j=1}^m P_j^{a_j})$. Notice also that $\lead_y f=f_{b_1,\dots,b_m}$.
\end{proof}

\begin{proposition}\label{Q-degrees-intermediate}
Suppose that $A=k[x,y]$ and $i\ge 1$ then $Q_i$ is a polynomial in $A[z]$ with
 $\deg_z Q_i= \prod_{j=1}^{i-1}\bar{s}_j$ and $\lead_z Q_i=x^{d_i}$, 
 where $d_i=\sum_{k=1}^{i-1}(\bar{r}_{k,0}\prod_{j=k+1}^{i-1}\bar{s}_j)$.  
 
Moreover, if $a_1,a_2,\dots, a_{i-1}$ are nonnegative integers such that $a_j<\bar{s}_j$ for all $j<i$ then $\deg_z(\prod_{j=1}^{i-1}Q_j^{a_j})<\deg_z Q_i$.
\end{proposition}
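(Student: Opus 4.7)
The plan is to prove all three assertions simultaneously by induction on $i$, leveraging Proposition \ref{P-degrees} to ensure that the $P_j$ contribute nothing to the $z$-degree, and using the recursive definition $Q_{i+1}=x^{\bar{r}_{i,0}}Q_i^{\bar{s}_i}-\bar{\m}_i\prod_{j=0}^{\bar{m}_i}P_j^{\bar{n}_{i,j}}\prod_{j=1}^{i-1}Q_j^{\bar{l}_{i,j}}$ as the engine. The base case $i=1$ is immediate: $Q_1=z$ has $z$-degree $1=\prod_{j=1}^{0}\bar{s}_j$ and leading coefficient $x^0=x^{d_1}$ since $d_1$ is an empty sum, and the ``moreover'' statement is vacuous.

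For the inductive step, I assume the proposition for $i$ and prove it for $i+1$. Since $P_j\in k[x,y]$ by Proposition \ref{P-degrees}, no $P_j$ contributes to the $z$-degree of the subtracted term. The key arithmetic step is to compute
\[
\deg_z\bigl(x^{\bar{r}_{i,0}}Q_i^{\bar{s}_i}\bigr)=\bar{s}_i\prod_{j=1}^{i-1}\bar{s}_j=\prod_{j=1}^{i}\bar{s}_j,\qquad \lead_z\bigl(x^{\bar{r}_{i,0}}Q_i^{\bar{s}_i}\bigr)=x^{\bar{r}_{i,0}+\bar{s}_id_i},
\]
and observe that the exponent $\bar{r}_{i,0}+\bar{s}_id_i$ equals $d_{i+1}$ by unrolling the definition of $d_i$. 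To conclude that this is indeed the leading $z$-term of $Q_{i+1}$, I must show that the second term has strictly smaller $z$-degree; here I invoke the ``moreover'' part of the inductive hypothesis applied to the exponents $\bar{l}_{i,j}$, which satisfy $\bar{l}_{i,j}<\bar{s}_j$ by construction in section \ref{numerical data}. This gives $\deg_z\bigl(\prod_{j=1}^{i-1}Q_j^{\bar{l}_{i,j}}\bigr)<\deg_z Q_i\le\deg_z Q_i^{\bar{s}_i}$.

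For the ``moreover'' clause at level $i+1$, suppose $a_j<\bar{s}_j$ for all $j\le i$. I split the product as $Q_i^{a_i}\cdot\prod_{j=1}^{i-1}Q_j^{a_j}$. Since $a_i\le\bar{s}_i-1$, the first factor contributes at most $(\bar{s}_i-1)\prod_{j=1}^{i-1}\bar{s}_j$ to the $z$-degree; by the inductive hypothesis, the second factor contributes at most $\prod_{j=1}^{i-1}\bar{s}_j-1$. Adding these bounds yields strictly less than $\prod_{j=1}^{i}\bar{s}_j=\deg_z Q_{i+1}$, as required.

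The proof is essentially bookkeeping, with no deep obstacle; the only step requiring modest care is checking the recursive identity $d_{i+1}=\bar{r}_{i,0}+\bar{s}_id_i$ for the exponent of $x$ in the leading coefficient, and making sure the strict inequality in the ``moreover'' clause propagates correctly when two nonnegative-integer bounds are combined.
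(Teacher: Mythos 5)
Your proof is correct and follows essentially the same route as the paper's: induction on $i$ via the recursion for $Q_{i+1}$, with the $P_j$ contributing nothing to the $z$-degree, the identity $d_{i+1}=\bar{r}_{i,0}+\bar{s}_i d_i$ for the leading coefficient, and the telescoping bound $\deg_z\bigl(\prod_{j=1}^{i-1}Q_j^{a_j}\bigr)<\prod_{j=1}^{i-1}\bar{s}_j$ (which the paper obtains by summing $a_j\le\bar{s}_j-1$ explicitly and you obtain by peeling off the top factor and citing the inductive ``moreover'' clause — the same computation). No gaps.
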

 
\begin{proof} 
We use induction on $i$. For $i=1$ we have $Q_1=z$ is monic of degree 1. Let $i>1$ then 
$Q_i=x^{\bar{r}_{i-1,0}}Q_{i-1}^{\bar{s}_{i-1}}-\bar{\m}_{i-1}\prod_{j=0}^{\bar{m}_{i-1}}P_j^{\bar{n}_{i-1,j}}\prod_{j=1}^{i-2}Q_j^{\bar{l}_{i-1,j}}$. 
By the inductive hypothesis we have  
\[
\begin{split}
\deg_z Q_{i-1}^{\bar{s}_{i-1}} = \prod_{j=1}^{i-1} & \bar{s}_j,\\
\deg_z (\prod_{j=1}^{i-2}Q_j^{\bar{l}_{i-1,j}})\, & =\bar{l}_{i-1,1}+\bar{l}_{i-1,2}\bar{s}_1+\dots+\bar{l}_{i-1,i-2}\bar{s}_1\cdots \bar{s}_{i-3}\\
& <\bar{s}_1+(\bar{s}_2-1)\bar{s}_1+\dots+(\bar{s}_{i-2}-1)\bar{s}_1\cdots \bar{s}_{i-3}=\bar{s}_1\cdots \bar{s}_{i-2}.
\end{split}
\]
Thus $\deg_z Q_i=\prod_{j=1}^{i-1} \bar{s}_j$ and $\lead_z P_i=x^{\bar{r}_{i-1,0}}(\lead_z Q_{i-1})^{\bar{s}_{i-1}}=x^{d_i}$, where $d_i=\bar{r}_{i-1,0}+\bar{s}_{i-1}\sum_{k=1}^{i-2}(\bar{r}_{k,0}\prod_{j=k+1}^{i-2}\bar{s}_j)=\sum_{k=1}^{i-1}(\bar{r}_{k,0}\prod_{j=k+1}^{i-1}\bar{s}_j)$.
\end{proof} 
 
 The next statement is a  straight forward consequence of Proposition \ref{Q-degrees-intermediate}. 
 
\begin{corollary}\label{Q-degrees}
Suppose that $A=k[x,x^{-1},y]$  and $i\ge 1$ then $x^{-d_i}Q_i$ is a monic polynomial in $A[z]$ and $\deg_z (x^{-d_i}Q_i)=\bar{s}_1\cdots \bar{s}_{i-1}$.
\end{corollary}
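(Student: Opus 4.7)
The plan is to derive this directly from Proposition \ref{Q-degrees-intermediate} by observing that inverting $x$ changes nothing about the $z$-degree but allows us to rescale the leading coefficient.

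First, I would note that Proposition \ref{Q-degrees-intermediate} shows $Q_i \in k[x,y][z]$, which is contained in $A[z]$ for $A = k[x,x^{-1},y]$. Since $x$ is a unit in $A$, the element $x^{-d_i}$ belongs to $A$, and multiplication by this nonzero element of the domain $A$ does not change the degree in $z$. Hence
\[
\deg_z(x^{-d_i}Q_i) = \deg_z Q_i = \prod_{j=1}^{i-1}\bar{s}_j = \bar{s}_1\cdots\bar{s}_{i-1}.
\]

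Next, by Proposition \ref{Q-degrees-intermediate} the leading coefficient of $Q_i$ as a polynomial in $z$ is $x^{d_i}$, so
\[
\lead_z(x^{-d_i}Q_i) = x^{-d_i}\cdot\lead_z Q_i = x^{-d_i}\cdot x^{d_i} = 1,
\]
which shows $x^{-d_i}Q_i$ is monic in $A[z]$.

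There is essentially no obstacle here: the content of the statement is already packaged inside Proposition \ref{Q-degrees-intermediate}. The only point worth noting is that one must pass from $k[x,y]$ to $k[x,x^{-1},y]$ in order to normalize the leading coefficient, since $x^{d_i}$ is not a unit in $k[x,y]$ when $d_i>0$; this is precisely why the corollary is stated over $A=k[x,x^{-1},y]$ rather than over $k[x,y]$.
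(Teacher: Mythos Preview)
Your proof is correct and matches the paper's approach exactly: the paper simply states that this corollary is a straightforward consequence of Proposition \ref{Q-degrees-intermediate}, and you have spelled out precisely the two observations (degree is preserved under multiplication by a unit, and $x^{-d_i}\cdot x^{d_i}=1$) that make it so.
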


Using  the argument of the proof of Corollary \ref{new lemma} we also get the   following statement:
\begin{corollary}\label{new q-lemma}
Suppose that the $n$-tuples of nonegative integers $(a_1,\dots,a_n)$ and $(b_1,\dots,b_n)$ are such that $a_j,b_j<\bar{s}_j$ for all $j<n$. If $(a_n,\dots,a_1)<(b_n,\dots,b_1)$ then $\deg_z (\prod_{j=1}^n Q_j^{a_j})<\deg_z (\prod_{j=1}^n Q_j^{b_j})$. 

In particular, if
$f=\sum_{a_n=0}^s\left(\sum_{0\le a_j<\bar{s}_j}f_{a_1,\dots,a_n}(x,y)\prod_{j=1}^{n-1} Q_j^{a_j}\right)Q_n^{a_n},$
 where $f_{a_1,\dots,a_n}\in k[x,y]$ for all $(a_1,\dots,a_n)$,  then $\deg_z (\prod_{j=1}^n Q_j^{a_j})\le\deg_z f$.
\end{corollary}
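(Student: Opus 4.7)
The plan is to mirror the proof of Corollary \ref{new lemma} almost verbatim, with $P_j$ replaced by $Q_j$, $q_j$ by $\bar{s}_j$, and $\deg_y$ by $\deg_z$, invoking Proposition \ref{Q-degrees-intermediate} in place of Proposition \ref{P-degrees}. The only nontrivial ingredient is the last assertion of Proposition \ref{Q-degrees-intermediate}, namely that when $a_j < \bar{s}_j$ for all $j < i$ one has $\deg_z(\prod_{j=1}^{i-1} Q_j^{a_j}) < \deg_z Q_i$.

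For the first part, given $(a_n,\dots,a_1) < (b_n,\dots,b_1)$ lexicographically, I pick the (unique) index $l$ with $1 \le l \le n$ such that $a_l < b_l$ and $a_j = b_j$ for all $j > l$. Since $a_j < \bar{s}_j$ for all $j < l \le n$, Proposition \ref{Q-degrees-intermediate} yields $\deg_z(\prod_{j=1}^{l-1} Q_j^{a_j}) < \deg_z Q_l$. Multiplying by $Q_l^{a_l} \prod_{j=l+1}^n Q_j^{a_j}$ and using $a_l + 1 \le b_l$ together with $a_j = b_j$ for $j > l$, one obtains the chain
\[
\deg_z\Bigl(\prod_{j=1}^n Q_j^{a_j}\Bigr) < \deg_z\Bigl(Q_l^{a_l+1}\prod_{j=l+1}^n Q_j^{a_j}\Bigr) \le \deg_z\Bigl(\prod_{j=l}^n Q_j^{b_j}\Bigr) \le \deg_z\Bigl(\prod_{j=1}^n Q_j^{b_j}\Bigr),
\]
which is the desired conclusion.

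For the second part, I let $(b_n,\dots,b_1) = \max\{(a_n,\dots,a_1) : f_{a_1,\dots,a_n} \ne 0\}$ in the lexicographic order just used (this maximum is attained because only finitely many coefficients are nonzero and all tuples satisfy $a_j < \bar{s}_j$ for $j < n$ while $a_n \le s$). Applying the first part, every other nonzero summand satisfies $\deg_z(\prod_{j=1}^n Q_j^{a_j}) < \deg_z(\prod_{j=1}^n Q_j^{b_j})$. Because the coefficients $f_{a_1,\dots,a_n}(x,y)$ lie in $k[x,y]$ and so contribute nothing to the $z$-degree, no cancellation can happen in the top $z$-degree term. Therefore $\deg_z f = \deg_z(\prod_{j=1}^n Q_j^{b_j})$, which by construction dominates $\deg_z(\prod_{j=1}^n Q_j^{a_j})$ for every tuple with $f_{a_1,\dots,a_n} \ne 0$, and trivially for tuples where the coefficient is zero.

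There is no serious obstacle; the argument is purely combinatorial and the only analytic input (that $\deg_z Q_l$ strictly exceeds the $z$-degree of any product of lower $Q_j$'s with exponents below $\bar{s}_j$) is exactly Proposition \ref{Q-degrees-intermediate}. The one point meriting a brief mention is that the lexicographic convention $(a_n,\dots,a_1) < (b_n,\dots,b_1)$ compares the highest-index coordinate first, so the index $l$ above is the largest coordinate at which $a$ and $b$ disagree; this matches the convention already fixed at the start of Section \ref{properties2}.
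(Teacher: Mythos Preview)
Your proposal is correct and follows precisely the approach the paper intends: the paper's own proof consists of the single sentence ``Using the argument of the proof of Corollary~\ref{new lemma} we also get the following statement,'' and you have carried out exactly that substitution ($P_j\to Q_j$, $q_j\to\bar s_j$, $\deg_y\to\deg_z$, Proposition~\ref{P-degrees}$\to$Proposition~\ref{Q-degrees-intermediate}) with the details filled in correctly.
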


\begin{remark} We expect the sequence of polynomial $\{P_i\}_{i>0}$ to satisfy MacLane's axioms for key polynomials corresponding to the field extension $k(x)\hookrightarrow k(x,y)$ and $\{x^{-d_i}Q_i\}_{i>0}$ to satisfy the axioms for key polynomials corresponding to the field extension $k(x,y)\hookrightarrow k(x,y,z)$. 
\end{remark}

\begin{lemma}\label{lemma1}
Suppose that $f\in k[x,y]$ and $M$ is such that $\deg_y f<\deg_y P_{M+1}$ then there exists a unique representation 
\begin{equation}\label{P-expansion}
f=\sum_{0\le a_j<q_j}f_{a_1,\dots,a_M}(x)\prod_{j=1}^M P_j^{a_j},
\end{equation}
where $f_{a_1,\dots,a_M}\in k[x]$ for all $(a_1,\dots,a_M)$.
\end{lemma}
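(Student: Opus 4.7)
My plan is to prove existence by induction on $M$ using iterated Euclidean division against the monic polynomials $P_j$, and to deduce uniqueness from a leading-coefficient argument based on Corollary~\ref{new lemma}.

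First I would treat the base case $M=1$. Since $P_2 = y^{q_1} - \lambda_1 x^{n_{1,0}}$, the condition $\deg_y f < \deg_y P_2 = q_1$ simply means $f = \sum_{a_1 = 0}^{q_1 - 1} f_{a_1}(x) y^{a_1}$, and this is the desired expansion since $P_1 = y$.

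For the inductive step, assuming the lemma for $M-1$, I would use that $P_M$ is monic in $y$ over $k[x]$ by Proposition~\ref{P-degrees}. Repeated Euclidean division by $P_M$ in $k[x][y]$ applied to $f$ produces an expansion
$$f = \sum_{a_M = 0}^{q_M - 1} g_{a_M}(x,y)\, P_M^{a_M}$$
with each $g_{a_M} \in k[x,y]$ satisfying $\deg_y g_{a_M} < \deg_y P_M$; here $q_M$ many terms suffice because $\deg_y P_{M+1} = q_M \deg_y P_M$ by Proposition~\ref{P-degrees}. Applying the inductive hypothesis to each $g_{a_M}$ and substituting then yields the desired expansion \eqref{P-expansion}.

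Uniqueness follows from a leading-term argument. Suppose for contradiction that there is a nontrivial null relation $\sum_{0 \le a_j < q_j} c_{a_1,\ldots,a_M}(x) \prod_{j=1}^M P_j^{a_j} = 0$ with $c_{a_1,\ldots,a_M} \in k[x]$. Pick the lexicographically largest tuple $(b_M, \ldots, b_1)$ with $c_{b_1,\ldots,b_M} \neq 0$. By Corollary~\ref{new lemma}, $\deg_y \prod_j P_j^{b_j}$ strictly exceeds $\deg_y \prod_j P_j^{a_j}$ for every other tuple appearing in the sum. Because every $P_j$ is monic in $y$, so is $\prod_j P_j^{b_j}$, and hence the $y$-leading coefficient of the whole sum equals $c_{b_1,\ldots,b_M}(x)$, which is nonzero --- a contradiction. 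The only delicate step is verifying that the iterated Euclidean division terminates after exactly $q_M$ rounds with the right remainder bound; this is a short direct computation using $\deg_y f < q_M \deg_y P_M$ and the standard degree estimate for the division remainder, and is the main (though minor) obstacle in an otherwise routine argument.
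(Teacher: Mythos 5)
Your proof is correct and follows essentially the same route as the paper: induction on $M$, Euclidean division by the monic polynomial $P_M$ in $k[x][y]$, the degree bound $\deg_y f<\deg_y P_{M+1}=q_M\deg_y P_M$ forcing the $P_M$-adic expansion to stop below exponent $q_M$, and the inductive hypothesis applied to the remainders. The only cosmetic difference is in uniqueness, where the paper tracks uniqueness level by level through the Euclidean divisions while you run a single leading-coefficient argument on a null relation via Corollary \ref{new lemma}; both are valid and rest on the same facts.
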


\begin{proof}
We use induction on $M$. If $M=0$ then $\deg_y f<1$. Thus $f\in k[x]$ and $f=f(x)$ is the required representation. 

Assume $M>0$ and set $A=k[x]$. If $f\neq 0$ since $P_M$ is a monic polynomial in $A[y]$ by Euclidean division in $A[y]$ we find $q\in\ZZ_{\ge 0}$ such that 
\begin{equation}\label{powerexpansion}
f=\sum_{i=0}^q g_iP_M^i,
\end{equation}
where $g_i\in A[y]$, $g_q\neq 0$ and $\deg_y g_i<\deg_y P_M$ for all $i$. Since $\deg_y f=\deg_y(g_qP_M^q)$ and $\deg_y f<\deg_y P_{M+1}$ applying Proposition \ref{P-degrees} we get $q\prod_{j=1}^{M-1}q_j<\prod_{j=1}^M q_j$ and therefore $q<q_M$. Thus there exists a representation 
\begin{equation}\label{completepowers}
f=\sum_{i=0}^{q}g_iP_M^i+\sum_{i=q+1}^{q_M-1}0\cdot P_M^i=\sum_{0\le a_M<q_M}h_{a_M}P_M^{a_M}
\end{equation}
such that $h_{a_M}\in A[y]$ and $\deg_y h_{a_M}<\deg P_M$ for all $a_M$.  Also $h_{a_M}$ are uniquely determined for all $a_M$ due to uniqueness of representation (\ref{powerexpansion}).

Applying the inductive hypothesis to $h_{a_M}$ for all $a_M$ we get 
$$
f=\sum_{0\le a_M<q_M}\left(\sum_{0\le a_j<q_j}h_{(a_1,\dots,a_{M-1}),a_M}(x)\prod_{j=1}^{M-1}P_j^{a_j}\right)P_M^{a_M}=\sum_{0\le a_j<q_j}f_{a_1,\dots,a_M}(x)\prod_{j=1}^MP_j^{a_j}.
$$
By the inductive hypothesis  $f_{a_1,\dots,a_M}(x)$  are uniquely determined for all $(a_1,\dots,a_M)$.
\end{proof}

In general, if $f\in k[x,y]\setminus\{0\}$ and $M$ is any positive integer following the proof of Lemma \ref{lemma1} we obtain a unique representation 
$$
f=\sum_{a_M=0}^{q'}\left(\sum_{0\le a_j<q_j}f_{a_1,\dots,a_M}(x)\prod_{j=1}^{M-1} P_j^{a_j}\right)P_M^{a_M},
$$
where $q'$ is a nonnegative integer not necessarily  less than $q_M$, $f_{a_1,\dots,a_M}\in k[x]$ for all $(a_1,\dots,a_M)$ and $f_{a_1,\dots,a_{M-1},q'}\neq 0$ for some  $(a_1,\dots,a_{M-1})$. For compatibility with representation (\ref{P-expansion}) we set $q=\max(q',q_M-1)$ and $f_{a_1,\dots,a_M}=0$ for all $(a_1,\dots,a_M)$ such that $q'<a_M\le q$. The representation 
$$
f=\sum_{a_M=0}^{q}\left(\sum_{0\le a_j<q_j}f_{a_1,\dots,a_M}(x)\prod_{j=1}^{M-1} P_j^{a_j}\right)P_M^{a_M},
$$
is called a $P_M$-expansion of $f$. Notice that if $M$ is such that $\deg_y P_{M+1}>\deg_y f$ then the $P_M$-expansion of $f$ coincides with representation (\ref{P-expansion}).

\begin{lemma}\label{lemma2-intermediate}
Suppose that $f\in k[x, x^{-1},y,z]$ and $N$ is such that $\deg_z f<\deg_z Q_{N+1}$ then there exists a  unique representation 
\begin{equation}\label{fractionalexpansion}
f=\sum_{0\le c_j<\bar{s}_j}f'_{c_1,\dots,c_N}(x,x^{-1},y)\prod_{j=1}^N (x^{-d_j}Q_j)^{c_j}
\end{equation}
where $f_{c_1,\dots,c_N}\in k[x,x^{-1},y]$ for all $(c_1,\dots,c_N)$.
\end{lemma}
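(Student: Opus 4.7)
The plan is to mirror the proof of Lemma \ref{lemma1} almost verbatim, with $P_j$ replaced by $x^{-d_j}Q_j$, $q_j$ replaced by $\bar{s}_j$, and the base ring $k[x]$ replaced by $A=k[x,x^{-1},y]$. The crucial input is Corollary \ref{Q-degrees}, which tells us that $x^{-d_j}Q_j$ is a \emph{monic} polynomial in $z$ over $A$ with $\deg_z(x^{-d_j}Q_j)=\bar{s}_1\cdots \bar{s}_{j-1}$. Monicity is exactly what is needed so that Euclidean division can be performed inside $A[z]$ and so that an analog of Proposition \ref{P-degrees} holds multiplicatively for the $Q_j$'s via Proposition \ref{Q-degrees-intermediate}.

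I would proceed by induction on $N$. For $N=0$ the hypothesis is $\deg_z f<\deg_z Q_1=1$, which forces $f\in A$, and the empty product expansion is the unique representation. For the inductive step, suppose $N>0$ and $\deg_z f<\deg_z Q_{N+1}$. Applying Euclidean division in $A[z]$ by the monic polynomial $x^{-d_N}Q_N$, I obtain a unique representation
\begin{equation}\label{Qpowerexpansion}
f=\sum_{i=0}^{q}g_i(x^{-d_N}Q_N)^i,
\end{equation}
with $g_i\in A[z]$ and $\deg_z g_i<\deg_z Q_N$ for all $i$. The top degree is $\deg_z f=\deg_z\bigl(g_q(x^{-d_N}Q_N)^q\bigr)$, and since $\deg_z f<\deg_z Q_{N+1}=\bar{s}_N\deg_z Q_N$ by Proposition \ref{Q-degrees-intermediate}, we must have $q<\bar{s}_N$. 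Padding with zero coefficients if necessary, I rewrite \eqref{Qpowerexpansion} as
\begin{equation*}
f=\sum_{0\le c_N<\bar{s}_N}h_{c_N}(x^{-d_N}Q_N)^{c_N},
\end{equation*}
with $h_{c_N}\in A[z]$ and $\deg_z h_{c_N}<\deg_z Q_N$, uniquely determined.

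Now for each $c_N$, I apply the inductive hypothesis to $h_{c_N}\in A[z]$ (viewed as a polynomial in $z$ with coefficients in $A$, which is exactly the setting of the lemma but with $N-1$ instead of $N$), to obtain the unique expansion
\begin{equation*}
h_{c_N}=\sum_{0\le c_j<\bar{s}_j,\ 1\le j\le N-1}h'_{c_1,\dots,c_{N-1},c_N}(x,x^{-1},y)\prod_{j=1}^{N-1}(x^{-d_j}Q_j)^{c_j}.
\end{equation*}
Substituting back and setting $f'_{c_1,\dots,c_N}=h'_{c_1,\dots,c_{N-1},c_N}$ yields the required representation \eqref{fractionalexpansion}. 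Uniqueness follows by concatenating the two uniqueness statements: at the outer layer from Euclidean division, and at the inner layer from the inductive hypothesis.

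The only mild obstacle is that coefficients now live in the Laurent polynomial ring $A=k[x,x^{-1},y]$ rather than a polynomial ring, but since $x^{-d_j}Q_j$ is monic in $z$ over $A$, Euclidean division in $A[z]$ poses no difficulty, and no division by $x$ is ever required beyond what is already encoded in the $x^{-d_j}$ factors. I do not expect to need Corollary \ref{new q-lemma} for this lemma; it will be used afterward to extract the $z$-degree of arbitrary Laurent expansions, analogous to how Corollary \ref{new lemma} is used after Lemma \ref{lemma1}.
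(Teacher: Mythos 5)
Your proposal is correct and matches the paper's proof, which likewise just observes via Corollary \ref{Q-degrees} that $x^{-d_i}Q_i$ is monic in $A[z]$ for $A=k[x,x^{-1},y]$ and then invokes the argument of Lemma \ref{lemma1} verbatim. Your write-up simply spells out the induction and Euclidean division that the paper leaves implicit.
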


\begin{proof} Notice that if $A=k[x,x^{-1},y]$ and $i>0$ then by Corollary \ref{Q-degrees} $x^{-d_i}Q_i$ is a monic polynomial in $A[z]$. Now the argument of the proof of Lemma \ref{lemma1} applies. 
\end{proof}

\begin{corollary}\label{lemma2}
Suppose that $f\in k[x,y,z]$ and $N$ is such that $\deg_z f<\deg_z Q_{N+1}$ then there exists a unique $K\in\ZZ_{\ge 0}$ and  representation 
\begin{equation}\label{Q-expansion}
 x^Kf=\sum_{0\le c_j<\bar{s}_j}f_{c_1,\dots,c_N}(x,y)\prod_{j=1}^N Q_j^{c_j},
\end{equation}
where $f_{c_1,\dots,c_N}\in k[x,y]$ for all $(c_1,\dots,c_N)$ and if $f_{c_1,\dots,c_N}\in xk[x,y]$ for all $(c_1,\dots,c_N)$ then $K=0$.
\end{corollary}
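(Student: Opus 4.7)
The plan is to deduce the Corollary from Lemma \ref{lemma2-intermediate} by clearing denominators. First I would view $f \in k[x,y,z]$ as an element of $k[x,x^{-1},y,z]$ and apply Lemma \ref{lemma2-intermediate} to obtain a unique expansion
$$f = \sum_{0 \le c_j < \bar{s}_j} f'_{c_1,\dots,c_N}(x,x^{-1},y) \prod_{j=1}^N (x^{-d_j}Q_j)^{c_j}.$$
Since $\prod_{j=1}^N (x^{-d_j}Q_j)^{c_j} = x^{-\sum_{j=1}^N c_j d_j}\prod_{j=1}^N Q_j^{c_j}$, absorbing the negative power of $x$ into the coefficient produces a unique representation
$$f = \sum_{0 \le c_j < \bar{s}_j} g_{c_1,\dots,c_N}(x,x^{-1},y) \prod_{j=1}^N Q_j^{c_j}$$
with each $g_{c_1,\dots,c_N} \in k[x,x^{-1},y]$.

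Next I would pick the minimal power of $x$ that clears denominators. For each tuple $(c_1,\dots,c_N)$ let $v_{c_1,\dots,c_N}$ be the smallest exponent of $x$ appearing in $g_{c_1,\dots,c_N}$, with the convention $v = +\infty$ for the zero element, and set $v := \min_{(c_1,\dots,c_N)} v_{c_1,\dots,c_N}$ and $K := \max(0,-v)$. Then $f_{c_1,\dots,c_N} := x^K g_{c_1,\dots,c_N} \in k[x,y]$ by construction, and $x^K f$ has the required form. If $K > 0$ then $K = -v$, hence some $f_{c_1,\dots,c_N}$ has a nonzero term independent of $x$, showing that not every $f_{c_1,\dots,c_N}$ lies in $xk[x,y]$; contrapositively, if every $f_{c_1,\dots,c_N} \in xk[x,y]$ then $K = 0$, which is the stated minimality condition.

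For uniqueness, suppose we have two representations $x^{K_i}f = \sum h^{(i)}_{c_1,\dots,c_N}\prod_{j=1}^N Q_j^{c_j}$ for $i=1,2$, satisfying the condition, with $K_1 \ge K_2$. Multiplying the $i=2$ equation by $x^{K_1-K_2}$, subtracting, and rewriting $Q_j^{c_j} = x^{c_j d_j}(x^{-d_j}Q_j)^{c_j}$ yields a vanishing expansion of the type in Lemma \ref{lemma2-intermediate}; uniqueness there forces $h^{(1)}_{c_1,\dots,c_N} = x^{K_1-K_2}h^{(2)}_{c_1,\dots,c_N}$ for every tuple. If $K_1 > K_2$ this places every $h^{(1)}_{c_1,\dots,c_N}$ in $xk[x,y]$, and the hypothesis on the first representation forces $K_1 = 0$, contradicting $K_1 > K_2 \ge 0$; hence $K_1 = K_2$ and the $h^{(i)}_{c_1,\dots,c_N}$ coincide. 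The main subtlety is recognizing that the stated implication about $K$ is precisely a minimality statement that pins down $K$ among all rescalings of an otherwise-unique Laurent expansion; once this is noted, the argument is essentially bookkeeping between $k[x,x^{-1},y,z]$ and $k[x,y,z]$.
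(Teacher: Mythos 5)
Your proposal is correct and follows essentially the same route as the paper: apply Lemma \ref{lemma2-intermediate}, absorb the factors $x^{-d_j}$ into the coefficients, and clear denominators with the minimal admissible power $K$ (your $K=\max(0,-v)$ is exactly the paper's $K=\max\bigl(\max_{(c_1,\dots,c_N)}\{\sum_{j=1}^N d_jc_j-\ord_x f'_{c_1,\dots,c_N}\},0\bigr)$). You additionally write out the uniqueness argument, which the paper leaves implicit; your version of it is sound.
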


\begin{proof}
We get the required representation by multiplying both sides of representation (\ref{fractionalexpansion}) by an appropriate power of $x$. More precisely, 
$$
K=\max(\max_{(c_1,\dots,c_N)}\{\sum_{j=1}^N d_jc_j-\ord_xf'_{c_1,\dots,c_N}\},0).
$$
\end{proof}

In general, if $f\in k[x,y,z]\setminus\{0\}$ and $N$ is any positive integer we  can find a unique $K\in\ZZ_{\ge 0}$ and representation 
$$
x^Kf=\sum_{c_N=0}^{s'}\left(\sum_{0\le c_j<\bar{s}_j}f_{c_1,\dots,c_N}(x,y)\prod_{j=1}^{N-1} Q_j^{c_j}\right)Q_N^{c_N},
$$
such that  $s'$ is a nonnegative integer not necessarily  less than $\bar{s}_N$,
$f_{c_1,\dots,c_N}\in k[x,y]$ for all $(c_1,\dots,c_N)$ and $f_{c_1,\dots,c_{N-1},s'}\neq 0$ for some  $(c_1,\dots,c_{N-1})$, 
and if   $f_{c_1,\dots,c_N}\in xk[x,y]$ for all $(c_1,\dots,c_N)$ then $K=0$. For compatibility with representation (\ref{Q-expansion}) we set
$s=\max(s',\bar{s}_N-1)$ and $f_{c_1,\dots,c_N}=0$ for all $(c_1,\dots,c_N)$ such that $s'<c_N\le s$.  The representation 
$$
x^Kf=\sum_{c_N=0}^{s}\left(\sum_{0\le c_j<\bar{s}_j}f_{c_1,\dots,c_N}(x,y)\prod_{j=1}^{N-1} Q_j^{c_j}\right)Q_N^{c_N}
$$
is called a $Q_N$-expansion of $f$. Notice that if $N$ is such that $\deg_z Q_{N+1}>\deg_z f$ then the $Q_N$-expansion of $f$ coincides with representation (\ref{Q-expansion}).

\begin{theorem}\label{theorem1} Suppose that in notation of section \ref{numerical data} infinitely many $q_i$ and  $\bar{s}_i$ are greater than 1.
Then for $f\in k[x,y,z]\setminus\{0\}$ there exist unique $K,M,N\in\ZZ_{\ge 0}$ and  representation
\begin{equation}\label{NUrepresentation}
x^Kf=\sum_{\substack{0\le a_i<q_i\\
0\le c_j<\bar{s}_j}}f_{AC}(x)\prod_{i=1}^MP_i^{a_i}\prod_{j=1}^NQ_j^{c_j}, 
\end{equation}
where  $AC=(a_1,\dots,a_M,c_1,\dots,c_N)$, that satisfies the following conditions
\begin{itemize}
 \item[] $f_{AC}\in k[x]$ for all $AC$, 
 \item[] $f_{AC}\neq 0$ for some $AC$ with $c_N> 0$, 
\item[] $f_{AC}\neq 0$ for some $AC$ with $a_M> 0$,
 \item[]if $f_{AC}\in xk[x]$ for all $AC$ then $K=0$.
  \end{itemize}
\end{theorem}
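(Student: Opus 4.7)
My plan is to construct the representation by composing the $Q$-expansion of Corollary \ref{lemma2} with the $P$-expansion of Lemma \ref{lemma1}, and then to shrink $M$ and $N$ to the tight values forced by the nonvanishing conditions. For uniqueness, I will show that $N$, $K$, and $M$ are each determined by $f$ alone, after which the coefficients $f_{AC}$ are forced by the uniqueness assertions of Lemma \ref{lemma1} and Corollary \ref{lemma2}.

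For existence, the assumption that infinitely many $\bar{s}_i$ and $q_i$ exceed $1$ forces $\deg_z Q_i = \prod_{j=1}^{i-1}\bar{s}_j \to \infty$ and $\deg_y P_i = \prod_{j=1}^{i-1} q_j \to \infty$. First I would pick $N_0$ so that $\deg_z Q_{N_0+1} > \deg_z f$ and apply Corollary \ref{lemma2} to obtain a unique $K\in\ZZ_{\ge 0}$ and unique coefficients $g_{c_1,\ldots,c_{N_0}} \in k[x,y]$ with
$$x^Kf \;=\; \sum_{0 \le c_j < \bar{s}_j} g_{c_1,\ldots,c_{N_0}}(x,y)\prod_{j=1}^{N_0}Q_j^{c_j}.$$
Next, pick $M_0$ so that $\deg_y P_{M_0+1}$ exceeds the $y$-degree of every nonzero $g_{c_1,\ldots,c_{N_0}}$, and apply Lemma \ref{lemma1} to expand each such $g$ in powers of $P_1,\ldots,P_{M_0}$. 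Substituting produces a representation of the desired form with $M=M_0$ and $N=N_0$. Finally let $N\le N_0$ be the largest index for which some nonzero coefficient has $c_N>0$ (taking $N=0$ if no such index exists), and define $M\le M_0$ analogously. Truncating the products to run only over $i\le M$ and $j\le N$ drops only factors equal to $1$, so the remaining conditions on the coefficients hold by construction.

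For uniqueness, suppose $f$ admits two valid representations. Because $f_{AC}\in k[x]$ and $P_i\in k[x,y]$ do not depend on $z$, each summand has $z$-degree equal to $\deg_z\prod_{j=1}^N Q_j^{c_j}$, and by Corollary \ref{new q-lemma} these degrees are pairwise distinct and lex-ordered by $(c_N,\ldots,c_1)$. Thus $\deg_z f = \deg_z(x^K f)$ is realized by the unique lex-maximal tuple $(c_N,\ldots,c_1)$ with nonzero $f_{AC}$. The tightness condition forces $c_N\ge 1$ for that maximal tuple, which sandwiches $\deg_z f$ into $[\deg_z Q_N,\deg_z Q_{N+1})$ and pins down $N$ (in particular forcing $\bar{s}_N\ge 2$). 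With $N$ fixed, Corollary \ref{lemma2} yields uniqueness of $K$ and of the coefficients $g_{c_1,\ldots,c_N}\in k[x,y]$. Then $M$ is forced to be the largest index occurring with positive exponent in the $P$-expansion of any $g_{c_1,\ldots,c_N}$, and Lemma \ref{lemma1} gives uniqueness of each $f_{AC}$.

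The main technical obstacle is matching the two $x$-divisibility conditions for $K$: the theorem requires $K=0$ whenever every $f_{AC}\in xk[x]$, while Corollary \ref{lemma2} imposes $K=0$ whenever every $g_{c_1,\ldots,c_N}\in xk[x,y]$. These conditions are equivalent because a polynomial $g(x,y)$ lies in $xk[x,y]$ if and only if every coefficient of its $P$-expansion lies in $xk[x]$: writing $g=xh$ with $h\in k[x,y]$, expanding $h$ via Lemma \ref{lemma1}, and invoking the uniqueness of the $P$-expansion shows that the coefficients of the $P$-expansion of $g$ are exactly $x$ times those of $h$, while the converse is immediate.
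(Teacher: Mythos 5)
Your proof is correct and follows the same overall strategy as the paper: existence by composing the $Q$-expansion of Corollary \ref{lemma2} with the $P$-expansion of Lemma \ref{lemma1}, and uniqueness by first pinning down $N$ from $\deg_z f$ via Corollary \ref{new q-lemma}, then invoking the uniqueness clauses of Corollary \ref{lemma2} and Lemma \ref{lemma1} in turn. The one step where you genuinely diverge is the reconciliation of the two $x$-divisibility normalizations for $K$: the paper proves that a second representation with $K'>0$ must have some $f'_{C_0}\notin xk[x,y]$ by selecting the lexicographically maximal exponent tuple $A_0$ with $f'_{A_0C_0}\notin xk[x]$ and identifying $f'_{A_0C_0}$ as a leading coefficient via Corollary \ref{new lemma}, whereas you observe directly that $g\in xk[x,y]$ if and only if every coefficient of its $P$-expansion lies in $xk[x]$, which follows at once from writing $g=xh$ and appealing to the uniqueness of the $P$-expansion of $h$. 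Your version is shorter and arguably cleaner, since it localizes the issue to a single $C$ at a time rather than arguing across all tuples simultaneously; the paper's version has the minor advantage of exhibiting the offending coefficient explicitly as $\lead_y(f'_{C_0}-h)$, a computation it reuses elsewhere. Both arguments rest on the same underlying fact (uniqueness and degree control of $P$-expansions), so the proofs are interchangeable.
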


\begin{proof} To construct representation (\ref{NUrepresentation}) we will first apply Corollary \ref{lemma2} to $f$ and then apply Lemma \ref{lemma1} to every term $f_{c_1,\dots,c_N}$ of representation (\ref{Q-expansion}).

First notice that since there are infinitely many $\bar{s}_i$ greater than 1, for any fixed degree $d$ there is $i$ such that $\deg_z Q_i>d$. If $f\in k[x,y]$ set $N=0$. If $f\not\in k[x,y]$ let $N$ be such that $\deg_z Q_N\le \deg_z f$ and $\deg_z f<\deg_z Q_{N+1}$. Then consider representation (\ref{Q-expansion}) of $f$
$$
 x^Kf=\sum_{0\le c_j<\bar{s}_j}f_{c_1,\dots,c_N}(x,y)\prod_{j=1}^N Q_j^{c_j}.
$$
If $f_{c_1,\dots,c_N}=0$ for all $(c_1,\dots,c_N)$ such that $c_N>0$, then $\deg_z f$ is bounded by $\deg_z\left(\prod_{j=1}^{N-1}Q_j^{\bar{s}_j-1}\right)$.  Then $\deg_z f<\deg_z Q_N$, which contradicts the choice of $N$. Thus, there exists $(c_1,\dots,c_N)$ such that $c_N>0$ and $f_{c_1,\dots,c_N}\neq 0$.

Now  since there are infinitely many $q_i$ greater than 1, for any fixed degree $d$ there exists $i$ such that $\deg_y P_i>d$. Set
$$
d=\max_{(c_1,\dots,c_N)}\deg_y f_{c_1,\dots,c_N}
$$ 
and let $M$ be such that $\deg_y P_M\le d$ and $d<\deg_y P_{M+1}$. Then consider representation (\ref{P-expansion}) of $f_{c_1,\dots,c_N}$ for all $C=(c_1,\dots,c_N)$
$$
f_C=f_{c_1,\dots,c_N}=\sum_{0\le a_i<q_i}f_{a_1,\dots,a_M,c_1,\dots,c_N}(x)\prod_{i=1}^M P_i^{a_i}=\sum_{0\le a_i<q_i}f_{AC}(x)\prod_{i=1}^M P_i^{a_i}.
$$
Observe that if $C=(c_1,\dots,c_N)$ is such that $c_N>0$ and $f_C\neq 0$ then there exists  $A=(a_1,\dots,a_M)$ such that $f_{AC}\neq 0$. 

Let now $C_d=(c_1,\dots,c_N)$ be such that $\deg_y f_{C_d}=d$. If $f_{AC_d}=0$ for all $A=(a_1,\dots,a_M)$ such that $a_M>0$, then $\deg_y f_{C_d}$ is bounded by $\deg_y\left(\prod_{i=1}^{M-1}P_i^{q_i-1}\right)$. Then $d<\deg_y P_M$, which contradicts the choice of $M$. Thus, there exists $A=(a_1,\dots,a_M)$ such that $f_{AC_d}\neq 0$ and $a_M>0$. 

Finally, if $f_{AC}\in xk[x]$ for all $AC$ then $f_{c_1,\dots,c_N}\in xk[x,y]$ for all $(c_1,\dots,c_N)$ and it follows from properties of representation (\ref{Q-expansion}) that $K=0$. Thus
$$
x^Kf=\sum_{\substack{0\le a_i<q_i\\
0\le c_j<s_j}}f_{AC}(x)\prod_{i=1}^MP_i^{a_i}\prod_{j=1}^NQ_j^{c_j}
$$
is  a representation of $f$ that satisfies all the conditions.

Assume that 
$$
x^{K'}f=\sum_{\substack{0\le a_i<q_i\\
0\le c_j<\bar{s}_j}}f'_{AC}(x)\prod_{i=1}^{M'}P_i^{a_i}\prod_{j=1}^{N'}Q_j^{c_j}
$$
is another representation of $f$ satisfying the conditions of the theorem. 

We notice that $\deg_z f\le\deg_z\left(\prod_{j=1}^{N'}Q_j^{\bar{s}_j-1}\right)<\deg_z Q_{N'+1}$. On the other hand since there exists $AC$ such that $c_{N'}>0$ and $f'_{AC}\neq 0$ we have $\deg_z f \ge\deg_z Q_{N'}$. Thus $N'=N$ .  For all $C=(c_1,\dots, c_{N})$ set 
$$f'_C=f'_{c_1,\dots,c_{N}}=\sum_{0\le a_i<q_i}f'_{AC}(x)\prod_{i=1}^{M'} P_i^{a_i}.$$

We will now show that if $K'>0$ there exists $C_0=(c_1,\dots,c_{N})$ such that $f'_{C_0}\not\in xk[x,y]$. Assume that $K'>0$, then there exists $AC$ such that $f'_{AC}\not\in xk[x]$. Let $A_0=(b_1,\dots,b_{M'})$ be such that
$$
(b_{M'},\dots,b_1)=\max_{f'_{AC}\not\in xk[x]}(a_{M'},\dots,a_1),
$$
and let $C_0=(c_1,\dots,c_{N})$ be such that $f'_{A_0C_0}\not\in xk[x]$.  Assume for contradiction that  $f'_{C_0}\in xk[x,y]$. Since for all $A=(a_1,\dots,a_{M'})$ such that $(a_{M'},\dots,a_1)>(b_{M'},\dots,b_1)$ we have $f'_{AC_0}\in xk[x]$, the following polynomial
$$
h=\sum_{\substack{A=(a_1,\dots,a_{M'})\\
(a_{M'},\dots,a_1)>(b_{M'},\dots,b_1)}}f'_{AC_0}(x)\prod_{i=1}^{M'} P_i^{a_i}
$$
is an element of $xk[x,y]$. Thus $(f'_{C_0}-h)\in xk[x,y]$, and therefore, $\lead_y(f'_{C_0}-h)\in xk[x]$. This contradicts the choice of $A_0C_0$ since by the proof of Corollary \ref{new lemma} we have $\lead_y(f'_{C_0}-h)=f'_{A_0C_0}$. 

Thus
$$
x^{K'}f=\sum_{0\le c_j<\bar{s}_j}f'_{c_1,\dots,c_{N}}(x,y)\prod_{j=1}^{N} Q_j^{c_j}
$$
is another representation of $f$ in the form of (\ref{Q-expansion}). Due to uniqueness of such a representation  $K'=K$ and $f'_{c_1,\dots,c_N}=f_{c_1,\dots,c_N}$ for all $(c_1,\dots,c_N)$.

We notice that $\deg_y f_{c_1,\dots,c_N}\le\deg_y\left(\prod_{i=1}^{M'}P_i^{q_i-1}\right)<\deg_y P_{M'+1}$ for all $(c_1,\dots,c_N)$. On the other hand  if 
$AC=(a_1,\dots,a_{M'},c_1,\dots,c_N)$ is such that $a_{M'}>0$ and $f'_{AC}\neq 0$ we have $\deg_y f_{c_1,\dots,c_N}\ge \deg_y P_{M'}$. Thus $M'=M$. For all $(c_1,\dots,c_N)$ we have 
$$
f_{c_1,\dots,c_{N}}=\sum_{0\le a_i<q_i}f'_{AC}(x)\prod_{i=1}^{M} P_i^{a_i}
$$
is another representation of $f_{c_1,\dots,c_N}$ in the form of (\ref{P-expansion}). Due to uniqueness of such a representation we get $f'_{AC}=f_{AC}$ for all $AC$.
\end{proof}

In general, if $f\in k[x,y,z]\setminus\{0\}$ and $M,\,N$ are some positive integers we  can find a unique $K\in\ZZ_{\ge 0}$ and representation 
$$
x^Kf=\sum_{c_N=0}^{s'}\sum_{a_M=0}^{q'}\left(\sum_{\substack{0\le a_i<q_i\\
0\le c_j<\bar{s}_j}}f_{AC}(x)\prod_{i=1}^{M-1}P_i^{a_i}\prod_{j=1}^{N-1}Q_j^{c_j}\right)P_M^{a_M}Q_N^{c_N},
$$
where  $AC=(a_1,\dots,a_M,c_1,\dots,c_N)$, $s',\,q'$ are nonnegative integers not necessarily  satisfying $s'<\bar{s}_N$ and $q'<q_N$, and the following conditions hold
\begin{itemize}
\item[] $f_{AC}\in k[x]$ for all $AC$, 
\item[] $f_{AC}\neq 0$ for some $AC$ with $c_N=s'$, 
\item[] $f_{AC}\neq 0$ for some $AC$ with $a_M=q'$,
\item[]if $f_{AC}\in xk[x]$ for all $AC$ then $K=0$.
\end{itemize}

For compatibility with representation (\ref{NUrepresentation}) we set
$s=\max(s',\bar{s}_N-1)$,
 $q=\max(q',q_M-1)$ and $f_{AC}=0$ for all $AC$ such that $s'<c_N\le s$ or $q'<a_M\le q$.  The representation 
$$
x^Kf=\sum_{c_N=0}^{s}\sum_{a_M=0}^{q}\left(\sum_{\substack{0\le a_i<q_i\\
0\le c_j<\bar{s}_j}}f_{AC}(x)\prod_{i=1}^{M-1}P_i^{a_i}\prod_{j=1}^{N-1}Q_j^{c_j}\right)P_M^{a_M}Q_N^{c_N},
$$
is called an $(M,N)$-expansion of $f$. Notice that if $M$ and $N$ are chosen as in Theorem  \ref{theorem1} then the $(M,N)$-expansion of $f$ coincides with representation (\ref{NUrepresentation}). Moreover, if $M'\ge M$ and $N'\ge N$, where $M$ and $N$ are as in Theorem \ref{theorem1}, then the nonzero terms of the $(M',N')$-expansion of $f$ coincide with the nonzero terms of representation (\ref{NUrepresentation}) for $f$.

We now define the following $\QQ$-valued  maps. The value function $\val$ is defined on the set of monomials in $\{P_i\}_{i\ge 1}\cup\{Q_j\}_{j\ge 1}$ with a coefficient in $k[x]$. Let $M$ and $N$ be nonnegative integers, $\prod_{i=1}^{M}P_i^{b_i}\prod_{j=1}^{N}Q_j^{t_j}$ be a monomial and $f(x)\in k[x]$ be a nonzero polynomial, we set $\val(f\prod_{i=1}^{M}P_i^{b_i}\prod_{j=1}^{N}Q_j^{t_j})=(\ord f)\b_0+\sum_{j=1}^M b_j\b_j+\sum_{j=1}^N t_j\bar{\g}_j$. The following functions are defined on $k[x,y,z]\setminus\{0\}$. Let $f(x,y,z)$ be a nonzero polynomial and $M,\,N$ be positive integers.  Consider  the $(M,N)$-expansion of $f$ 
$$
x^Kf=\sum_{c_N=0}^{s}\sum_{a_M=0}^{q}\left(\sum_{\substack{0\le a_i<q_i\\
0\le c_j<\bar{s}_j}}f_{AC}(x)\prod_{i=1}^{M-1}P_i^{a_i}\prod_{j=1}^{N-1}Q_j^{c_j}\right)P_M^{a_M}Q_N^{c_N},
$$
and set $\n_{M,N}(f)=-K\b_0+\min\{\val(f_{AC}\prod_{i=1}^{M}P_j^{a_j}\prod_{j=1}^{N}Q_j^{c_j})\mid f_{AC}\neq 0\}$.  Also consider representation (\ref{NUrepresentation}) of $f$ 
$$
x^Kf=\sum_{\substack{0\le a_i<q_i\\
0\le c_j<\bar{s}_j}}f_{AC}(x)\prod_{i=1}^MP_i^{a_i}\prod_{j=1}^NQ_j^{c_j},
$$
and set $\n(f)=-K\b_0+\min\{\val(f_{AC}\prod_{i=1}^{M}P_j^{a_j}\prod_{j=1}^{N}Q_j^{c_j})\mid f_{AC}\neq 0\}$. Notice that if $M$ and $N$ are chosen as in Theorem  \ref{theorem1} then $\n(f)=\n_{M,N}(f)$. Moreover, if $M'\ge M$ and $N'\ge N$, where $M$ and $N$ are as in Theorem \ref{theorem1}, then $\n_{M',N'}(f)=\n(f)$. 

We observe basic properties of $\val$, $\n_{M,N}$ and $\n$:
\begin{itemize}
\item[1.] If $H_1,H_2$ are monomials in $\{P_i\}_{i\ge 1}\cup\{Q_j\}_{j\ge 1}$ and $f_1(x),f_2(x)\in k[x]\setminus\{0\}$ then $\val((f_1H_1)(f_2H_2))=\val(f_1H_1)+\val(f_2H_2)$.

\item[2.] If $M,N,L\in\ZZ_{>0}$ and $f\in k[x,y,z]\setminus\{0\}$ then $\n_{M,N}(x^Lf)=L\b_0+\n_{M,N}(f)$.

\item[3.] If $M,N\in\ZZ_{>0}$ and $f,g\in k[x,y,z]\setminus\{0\}$ then $\n_{M,N}(f+g)\ge \min(\n_{M,N}(f),\n_{M,N}(g))$.

\item[4.] If $f,g\in k[x,y,z]\setminus\{0\}$ then $\n(f+g)\ge \min(\n(f),\n(g))$.
\end{itemize}

The next two lemmas will allow us to show that $\n_{M,N}(fg)=\n_{M,N}(f)+\n_{M,N}(g)$.

\begin{lemma}\label{induction base}
Suppose that  $M\in\ZZ_{>0}$ and $g=g_0(x)\prod_{i=1}^{M}P_i^{b_i}$ then $\n_{M,1}(g)=\val(g)$. Moreover, there exists a unique $M$-tuple $(a_1,\dots,a_M)$ in the $P_M$-expansion of $g$ such that $\val(g_{a_1,\dots,a_M}\prod_{j=1}^{M}P_j^{a_j})=\n_{M,1}(g)$. Also, for the $M$-tuple above $a_M=b_M$.
\end{lemma}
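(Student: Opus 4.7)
Since $g\in k[x,y]$ involves no $z=Q_1$, its $(M,1)$-expansion has $c_1=0$ in every nonzero term and $K=0$, so it coincides with the $P_M$-expansion provided by Lemma \ref{lemma1}; hence $\n_{M,1}(g)$ equals the minimum of $\val(g_{a_1,\dots,a_M}\prod_{j=1}^M P_j^{a_j})$ taken over the nonzero terms of that expansion. The plan is to build this expansion by repeatedly applying the defining identity
$$
P_j^{q_j}=\l_j\prod_{l=0}^{j-1}P_l^{n_{j,l}}+P_{j+1}\qquad(j<M)
$$
to any intermediate monomial whose $P_j$-exponent is at least $q_j$, and to read off the minimum-value term from what results.

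A single substitution replaces an intermediate term $c(x)\prod_i P_i^{a_i}$ by two: a \emph{low} term $\l_j c(x)x^{n_{j,0}}P_j^{a_j-q_j}\prod_{l<j}P_l^{a_l+n_{j,l}}\prod_{l>j}P_l^{a_l}$ of the same value as the original (since $\sum_{l=0}^{j-1}n_{j,l}\b_l=q_j\b_j$), and a \emph{high} term $c(x)P_j^{a_j-q_j}P_{j+1}^{a_{j+1}+1}\prod_{l\neq j,j+1}P_l^{a_l}$ whose value is strictly larger by $\b_{j+1}-q_j\b_j>0$. By Proposition \ref{P-degrees}, $\deg_y\prod_{l=0}^{j-1}P_l^{n_{j,l}}<\deg_y P_j^{q_j}$, so the low offspring has strictly smaller $y$-degree while the high one preserves it; since every intermediate term has $y$-degree at most $\deg_y g$, only finitely many monomials can ever appear and the process terminates once every term is in $P_M$-expansion form.

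The invariant to maintain is: at every stage there is a unique pair (monomial, $x$-power) whose coefficient attains the minimum value $\val(g)$, this pair satisfies $a_M=b_M$, and the coefficient at that $x$-power is a nonzero scalar multiple of the leading coefficient of $g_0$. It holds initially and is preserved under both kinds of reduction: reducing a non-minimum term gives two offspring of value strictly greater than $\val(g)$, leaving the current minimum untouched; reducing the minimum itself gives a low offspring of the same value, with coefficient multiplied by $\l_j x^{n_{j,0}}\in k^{*}\cdot k[x]$, together with a high offspring of strictly greater value. The one delicate check is that a collision of an offspring with some pre-existing term cannot destroy the leading $x$-coefficient at the minimum: any such pre-existing term has value exceeding $\val(g)$ at the shared monomial and therefore strictly higher $x$-order than the offspring, so it cannot interfere with the minimum-$x$-power contribution. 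Substitutions at indices $j<M$ never alter $a_M$ on the low side, so $a_M=b_M$ is preserved throughout. At termination the polynomial is in $P_M$-expansion form and the invariant yields $\n_{M,1}(g)=\val(g)$; uniqueness of the attaining tuple among those with $a_M=b_M$ and $a_i<q_i$ for $i<M$ follows from Corollary \ref{order1} applied to $\val(g)-b_M\b_M\in G_{M-1}$, and no tuple with $a_M\neq b_M$ can also attain $\val(g)$ because any reduction path reaching such a tuple must contain a high step, which strictly increases the value.
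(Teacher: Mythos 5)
Your proof is correct and rests on the same mechanism as the paper's: the substitution $P_j^{q_j}=\l_j\prod_{l=0}^{j-1}P_l^{n_{j,l}}+P_{j+1}$ splits a monomial into an equal-value piece of strictly smaller $y$-degree and a strictly higher-value piece of the same $y$-degree, with Proposition \ref{P-degrees} (via Corollary \ref{new lemma}) supplying the degree control. The difference is organizational. The paper packages the rewriting as a nested induction on $(M,D)$ with $D=\deg_y\prod_{i=1}^{M-1}P_i^{b_i}$: it first expands $g_0\prod_{i=1}^{M-1}P_i^{b_i}$ in $P_{M-1}$, multiplies by $P_M^{b_M}$, performs a single substitution on each offending term, and invokes the degree induction on the two resulting pieces $g_1,g_2$ — so termination and the uniqueness of the minimal term come for free from the inductive hypothesis. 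You instead run an unstructured rewriting process with an invariant, and the one soft spot is your termination claim: ``only finitely many monomials can ever appear'' does not by itself rule out cycling. What actually closes this is that a low offspring strictly decreases $\deg_y$ while a high offspring preserves $\deg_y$ and strictly increases $(a_M,\dots,a_1)$ lexicographically within the finite set of exponent tuples of that $y$-degree; hence every offspring is strictly smaller than its parent in a well-order, and the multiset extension of that order gives termination. Your collision analysis (a pre-existing term at the same monomial has strictly larger $x$-order, so it cannot kill the bottom $x$-coefficient of the minimal term) and your appeal to Corollary \ref{order1} for uniqueness of the attaining tuple are both sound, and your observation that only a high step at index $M-1$ can change $a_M$ correctly yields $a_M=b_M$; the paper obtains these same facts implicitly by tracking the unique minimal term through its induction.
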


\begin{proof} In order to find $\n_{M,1}(g)$ we construct the $(M,1)$-expansion of $g$, which in this case coincides with the $P_M$-expansion. We use induction on $(M,D)$, where $D=\deg_y (\prod_{i=1}^{M-1}P_i^{b_i})$. 

If $M=1$ then $g=g_0(x)P_1^{b_1}$ is the $P_M$-expansion of $g$ and the statement follows.  Assume that $M>1$. Let $g'=g_0(x)\prod_{i=1}^{M-1}P_i^{b_i}$. Then by the inductive hypothesis the $P_{M-1}$-expansion 
$$
g'=\sum_{a_{M-1}=0}^q\left(\sum_{0\le a_i<q_i}g'_{a_1,\dots,a_{M-1}}(x)\prod_{i=1}^{M-2}P_i^{a_i}\right)P_{M-1}^{a_{M-1}}.
$$ 
has only one term of value $\val(g')$ and all other terms are of greater value. Consider the following expansion 
$$
g=\sum_{a_{M-1}=0}^q\left(\sum_{0\le a_i<q_i}g'_{a_1,\dots,a_{M-1}}(x)\prod_{i=1}^{M-2}P_i^{a_i}\right)P_{M-1}^{a_{M-1}}P_M^{b_M}.
$$
Only one term of this expansion has value $\val(g')+b_M\b_M=\val(g)$, all other terms have greater value. Thus proving the statement of the lemma for every term of this expansion will prove the statement for $g$.

We fix a nonzero term $\bar{g}=g'_{a_1,\dots,a_{M-1}}(x)\prod_{i=1}^{M-1}P_i^{a_i}P_M^{b_M}$ in the expansion of $g$ and show that the lemma holds for $\bar{g}$. If $a_{M-1}<q_{M-1}$ then the representation above is the $P_M$-expansion of $\bar{g}$ and the lemma holds for $\bar{g}$. Assume that $a_{M-1}\ge q_{M-1}$, then
$$
\bar{g}=g'_{a_1,\dots,a_{M-1}}(x)\prod_{i=1}^{M-2}P_i^{a_i}(\l_{M-1}x^{n_{M-1,0}}\prod_{i=1}^{M-2}P_i^{n_{M-1,i}}+P_M)P_{M-1}^{a_{M-1}-q_{M-1}}P_M^{b_M}=g_1+g_2,
$$
where $g_1=(\l_{M-1}x^{n_{M-1,0}}g'_{a_1,\dots,a_{M-1}}(x))\prod_{i=1}^{M-2}P_i^{a_i+n_{M-1,i}}P_{M-1}^{a_{M-1}-q_{M-1}}P_M^{b_M}$ and $
g_2=g'_{a_1,\dots,a_{M-1}}(x)\prod_{i=1}^{M-2}P_i^{a_i}P_{M-1}^{a_{M-1}-q_{M-1}}P_M^{b_M+1}$. 

Notice that $\val(g_1)=\val(\bar{g})$ and $\val(g_2)>\val(\bar{g})$. Also notice that by Corollary \ref{new lemma} we have $\deg_y (\prod_{i=1}^{M-1}P_i^{a_i})\le \deg_y g'$, that is   $\deg_y (\prod_{i=1}^{M-1}P_i^{a_i})\le D$.

Since $\deg_y(\prod_{i=1}^{M-2}P_i^{a_i}P_{M-1}^{a_{M-1}-q_{M-1}})<\deg_y (\prod_{i=1}^{M-1}P_i^{a_i})$ by the inductive hypothesis applied to $g_2$ we get $\n_{M,1}(g_2)=\val(g_2)>\val(\bar{g})$ and all terms in the $P_M$-expansion of $g_2$ have values greater than $\val(\bar{g})$. In order to apply the inductive hypothesis to $g_1$ we notice that
$$
\deg_y (\prod_{i=1}^{M-2}P_i^{a_i+n_{M-1,i}}P_{M-1}^{a_{M-1}-q_{M-1}}) <\deg_y(\prod_{i=1}^{M-2}P_i^{a_i}P_{M-1}^{a_{M-1}-q_{M-1}+1})\le \deg_y(\prod_{i=1}^{M-1}P_i^{a_i})
$$
Thus  $\n_{M,1}(g_1)=\val(g_1)=\val(\bar{g})$ and only one term in the $P_M$-expansion of $g_1$ has value $\val(\bar{g})$. Moreover, the power of $P_M$ in this term coincides with the power of $P_M$ in $g_1$. This shows that the $P_M$-expansion of $\bar{g}$ will have a unique term of value $\val(\bar{g})$ and the power of $P_M$ in this term will be $b_M$. All other terms in the $P_M$-expansion of $\bar{g}$ will have greater values, in particular, $\n_{M,1}(\bar{g})=\val(\bar{g})$.  
\end{proof}

\begin{lemma}\label{key}
Suppose that  $M,N\in\ZZ_{>0}$ are such that $\bar{m}_{N-1}< M$. 
\newline Suppose that $f=f_0(x)\prod_{i=1}^{M}P_i^{b_i}\prod_{j=1}^{N}Q_j^{t_j}$ then $\n_{M,N}(f)=\val(f)$.  Moreover, there exists a unique  $AC=(a_1,\dots,a_M,c_1\dots,c_N)$ in the $(M,N)$-expansion of $f$ such that $-K\b_0+\val(f_{AC}\prod_{j=1}^{M}P_j^{a_j}\prod_{j=1}^{N}Q_j^{c_j})=\n_{M,N}(f)$. Also, for $AC$ as above $a_M=b_M$ and $c_N=t_N$.
\end{lemma}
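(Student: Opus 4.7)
The plan is to prove Lemma \ref{key} by induction on $N$, in close parallel with Lemma \ref{induction base}. For the base case $N=1$, I would write $f = Q_1^{t_1}\cdot g$ with $g = f_0(x)\prod_{i=1}^M P_i^{b_i}$; the $(M,1)$-expansion of $f$ is obtained from the $P_M$-expansion of $g$ (provided by Lemma \ref{induction base}) by multiplying every term by $Q_1^{t_1}=z^{t_1}$. Multiplication by $z^{t_1}$ shifts every term's $\val$ by $t_1\bar{\g}_1$ and installs $c_1=t_1$ in every term, so the unique minimal-value term of $g$'s expansion produces the unique minimal-value term of $f$'s expansion, with $a_M=b_M$, $c_1=t_1$, and total value $\val(f)$.

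For the inductive step $N>1$, I would run an inner induction on $t_{N-1}$. The driving reduction uses the defining identity
\[
x^{\bar{r}_{N-1,0}}Q_{N-1}^{\bar{s}_{N-1}}=Q_N+\bar{\m}_{N-1}\prod_{j=0}^{\bar{m}_{N-1}}P_j^{\bar{n}_{N-1,j}}\prod_{j=1}^{N-2}Q_j^{\bar{l}_{N-1,j}},
\]
playing the role that the defining relation for $P_M$ played in Lemma \ref{induction base}. When $t_{N-1}\ge\bar{s}_{N-1}$, substituting for one copy of $Q_{N-1}^{\bar{s}_{N-1}}$ decomposes $x^{\bar{r}_{N-1,0}}f=f_1+f_2$, where $f_1$ inserts the defining monomial (so $t_{N-1}$ drops by $\bar{s}_{N-1}$ and various lower-index $P$- and $Q$-powers grow by $\bar{n}_{N-1,j}$ and $\bar{l}_{N-1,j}$) and $f_2$ inserts $Q_N$ (so $t_{N-1}$ drops and $t_N$ increases by $1$). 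The value identity
\[
\bar{n}_{N-1,0}\b_0+\sum_{j=1}^{\bar{m}_{N-1}}\bar{n}_{N-1,j}\b_j+\sum_{j=1}^{N-2}\bar{l}_{N-1,j}\bar{\g}_j=\bar{r}_{N-1,0}\b_0+\bar{s}_{N-1}\bar{\g}_{N-1}
\]
yields $\val(f_1)=\val(x^{\bar{r}_{N-1,0}}f)$, and the hypothesis $\bar{\g}_N>\bar{r}_{N-1,0}\b_0+\bar{s}_{N-1}\bar{\g}_{N-1}$ gives $\val(f_2)>\val(f_1)$. Applying the inductive hypothesis to $f_1$ and $f_2$ (both monomials of the required form with strictly smaller $t_{N-1}$) produces unique minimal-value terms in their $(M,N)$-expansions; these terms occupy different $c_N$-slots ($t_N$ versus $t_N+1$) and cannot cancel, so the leading term of $f_1$ persists as the unique leading term of $x^{\bar{r}_{N-1,0}}f$. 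The assumption $\bar{m}_{N-1}<M$ prevents any $P_M$ from appearing in the substituted term, so $a_M=b_M$ is preserved throughout. For the base of the inner induction, $t_{N-1}<\bar{s}_{N-1}$, the monomial $\prod_{j<N}Q_j^{t_j}$ is handled by the outer inductive hypothesis on $N-1$ applied to $g=f_0(x)\prod_{i=1}^M P_i^{b_i}\prod_{j=1}^{N-1}Q_j^{t_j}$ (valid because $\bar{m}_{N-2}\le\bar{m}_{N-1}<M$), after which one multiplies termwise by $Q_N^{t_N}$.

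The main technical obstacle is the bookkeeping: substituting the defining monomial into $f_1$ may enlarge lower-index exponents $t_j$ for $j<N-1$, so induction on $t_{N-1}$ alone does not strictly decrease complexity. This is handled by combining the $t_{N-1}$-induction with a secondary measure such as $\deg_z$ of the $Q$-part (which decreases strictly in $f_1$), and by invoking the outer hypothesis on $N-1$ once $t_{N-1}<\bar{s}_{N-1}$.
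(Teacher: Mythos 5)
Your proposal is correct and follows essentially the same route as the paper's proof: induction on $N$ with the secondary measure $\deg_z$ of the $Q_1,\dots,Q_{N-1}$ part, the split $x^{\bar{r}_{N-1,0}}\bar{f}=f_1+f_2$ via the defining relation for $Q_N$, the comparison $\val(f_2)>\val(f_1)=\val(\bar{f})$ to isolate a unique minimal term, and the hypothesis $\bar{m}_{N-1}<M$ to preserve the exponent of $P_M$. The one organizational point to tighten is your ``base case'' $t_{N-1}<\bar{s}_{N-1}$: the $(M,N-1)$-expansion of $g$ can still contain terms with $c_{N-1}\ge\bar{s}_{N-1}$ (since lower-index exponents $t_j$, $j<N-1$, may be large), so those terms must also be fed through the substitution step --- which is exactly how the paper arranges the induction, applying the reduction to every term of the expansion of $f/Q_N^{t_N}$ rather than branching on the exponent $t_{N-1}$ of $f$ itself.
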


\begin{proof} In order to find $\n_{M,N}(f)$ we construct the $(M,N)$-expansion of $f$. We argue by induction on $(N,D)$, where $D=\deg_z(\prod_{j=1}^{N-1}Q_j^{t_j})$. The base case of $N=1$ follows easily from Lemma \ref{induction base}. 
Assume that $N>1$.

First notice that if the statement of the lemma holds for $x^Lf$ for some $L\in\ZZ_{\ge 0}$ then it also holds for $f$. Let $f'=f_0(x)\prod_{i=1}^{M}P_i^{b_i}\prod_{j=1}^{N-1}Q_j^{t_j}$ and
\begin{equation}\label{f'}
x^{K'}f'=\sum_{a_M=0}^q\sum_{c_{N-1}=0}^s\left(\sum_{\substack{0\le a_i<q_i\\
0\le c_j<\bar{s}_j}}f'_{AC}(x)\prod_{i=1}^{M-1}P_i^{a_i}\prod_{j=1}^{N-2}Q_j^{c_j}\right)P_M^{a_M}Q_{N-1}^{c_{N-1}}
\end{equation}
be the $(M,N-1)$-expansion of $f'$. Then after possibly multiplying $f$ by some power of $x$ we may assume that $K'=0$ and $f'_{AC}\in x^{\bar{r}_{N-1,0}}k[x]$ for all terms of (\ref{f'}). 

By the inductive hypothesis expansion (\ref{f'}) has only one term of value $\val(f')$ and all other terms are of greater value. Moreover, the power of $P_M$ in this unique term of minimal value is $b_M$. Consider the following expansion 
$$
f=\sum_{a_M=0}^q\sum_{c_{N-1}=0}^s\left(\sum_{\substack{0\le a_i<q_i\\
0\le c_j<\bar{s}_j}}f'_{AC}(x)\prod_{i=1}^{M-1}P_i^{a_i}\prod_{j=1}^{N-2}Q_j^{c_j}\right)P_M^{a_M}Q_{N-1}^{c_{N-1}}Q_N^{t_N}.
$$
Only one term of this expansion has value $\val(f')+t_N\g_N=\val(f)$ and the power of $P_M$ in this term is $b_M$, all other terms have greater value and the power of $Q_N$ is $t_N$ for all terms. Notice that the $(M,N)$-expansion of $f$ can be obtained by adding the $(M,N)$-expansions of all terms on the right. Thus proving the statement of the lemma for every term of this expansion will prove the statement for $f$.

We fix a nonzero term $\bar{f}=f'_{AC}(x)\prod_{i=1}^M P_i^{a_i}\prod_{j=1}^{N-1}Q_j^{c_j}Q_N^{t_N}$ in the expansion of $f$ and show that the lemma holds for $\bar{f}$. If $c_{N-1}<\bar{s}_{N-1}$ then the above representation is the $(M,N)$-expansion of $\bar{f}$, and therefore, the statement holds for $\bar{f}$. 

Assume that $c_{N-1}\ge \bar{s}_{N-1}$. In the above representation of $\bar{f}$ replace $Q_{N-1}^{\bar{s}_{N-1}}$ with 
$x^{-\bar{r}_{N-1,0}}(\bar{\m}_{N-1}x^{\bar{n}_{N-1,0}}\prod_{i=1}^{\bar{m}_{N-1}}P_i^{\bar{n}_{N-1,i}}\prod_{j=1}^{N-2}Q_j^{\bar{l}_{N-1,j}}+Q_N)$.  Denote $f''_{AC}=x^{-\bar{r}_{N-1,0}}f'_{AC}(x)$ and $f'''_{AC}=\bar{\m}_{N-1}x^{\bar{n}_{N-1,0}}f''_{AC}(x)$ to simplify notation. Furthermore,  set
$$
f_1=f'''_{AC}\prod_{i=1}^{M}P_i^{a'_i}\prod_{j=1}^{N-2}Q_j^{c_j+\bar{l}_{N-1,j}}Q_{N-1}^{c_{N-1}-\bar{s}_{N-1}}Q_N^{t_N},
$$
 where $a'_i=a_i+\bar{n}_{N-1,i}$ if $i\le \bar{m}_{N-1}$ and $a'_i=a_i$ if $i>\bar{m}_{N-1}$, and   
 $$
 f_2=f''_{AC}\prod_{i=1}^{M}P_i^{a_i}\prod_{j=1}^{N-2}Q_j^{c_j}Q_{N-1}^{c_{N-1}-\bar{s}_{N-1}}Q_N^{t_N+1}.
$$
Then $f''_{AC},f'''_{AC}\in k[x]$, $\val(f_1)=\val(\bar{f})$, $\val(f_2)>\val(\bar{f})$ and $\bar{f}=f_1+f_2$. Our next step is to apply the inductive hypothesis to $f_1$ and $f_2$. We notice that  by Corollary \ref{new q-lemma} we have $\deg_z(\prod_{j=1}^{N-1}Q_j^{c_j})\le\deg_z f'=D$. 

Let $K_1$ and $K_2$ be the powers of $x$ appearing in the $(M,N)$-expansions for $f_1$ and $f_2$, respectively. We set $K=\max(K_1,K_2)$, so that the powers of $x$ appearing in the $(M,N)$-expansions for $x^Kf_1$ and $x^Kf_2$ are both 0.

Since $\deg_{z}(\prod_{j=1}^{N-2}Q_j^{c_j}Q_{N-1}^{c_{N-1}-\bar{s}_{N-1}})<\deg_z(\prod_{j=1}^{N-1}Q_j^{c_j})$ it follows that $\n_{M,N}(x^Kf_2)=\val(x^Kf_2)>\val(x^K\bar{f})$ and all terms in the $(M,N)$-expansion of $x^Kf_2$ have values greater than $\val(x^K\bar{f})$. To apply the inductive hypothesis to $x^Kf_1$ we notice that 
$$
\deg_z(\prod_{j=1}^{N-2}Q_j^{c_j+\bar{l}_{N-1,j}}Q_{N-1}^{c_{N-1}-\bar{s}_{N-1}}) <\deg_z(\prod_{j=1}^{N-2}Q_j^{c_j}Q_{N-1}^{c_{N-1}-s_{N-1}+1})\le\deg_z(\prod_{j=1}^{N-1}Q_j^{c_j})
$$
Thus $\n_{M,N}(x^Kf_1)=\val(x^Kf_1)=\val(x^K\bar{f})$ and only one term in the $(M,N)$-expansion of $x^Kf_1$ has value $\val(x^K\bar{f})$. Moreover, the powers of $P_M$ and $Q_N$ in this unique term coincide with the powers of $P_M$ and $Q_N$ in $f_1$. This shows that the $(M,N)$-expansion of $x^K\bar{f}$ has a unique term of value $\val(x^K\bar{f})$ and the powers of $P_M$ and $Q_N$ in this term are $a'_M=a_M$ and $t_N$, respectively. All other terms in the $(M,N)$-expansion of $x^K\bar{f}$ are of greater value, in particular, $\n_{M,N}(x^K\bar{f})=\val(x^K\bar{f})$. Thus the lemma holds for $x^K\bar{f}$, and therefore it also holds for $\bar{f}$.
\end{proof}

\begin{corollary}\label{M,N-valuation-intermidiate}
Suppose that $M,N\in\ZZ_{>0}$ are such that $\bar{m}_{N-1}< M$. 

Suppose that $f,g\in k[x,y,z]\setminus\{0\}$ then $\n_{M,N}(fg)=\n_{M,N}(f)+\n_{M,N}(g)$. 
\end{corollary}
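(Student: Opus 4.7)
The plan is to establish both directions of the equality by decomposing $f$ and $g$ via their $(M,N)$-expansions and applying Lemma \ref{key} to each resulting monomial product. Write $x^{K_f}f = \sum_i t_i$ and $x^{K_g}g = \sum_j s_j$ in $(M,N)$-expansion form, so each $t_i,s_j$ is a monomial $h(x)\prod_l P_l^{a_l}\prod_l Q_l^{c_l}$ with $h\in k[x]\setminus\{0\}$ and $a_l<q_l$ for $l<M$, $c_l<\bar{s}_l$ for $l<N$. Multiplying term-by-term gives $x^{K_f+K_g}fg = \sum_{i,j}t_is_j$, and the hypothesis $\bar{m}_{N-1}<M$ ensures each $t_is_j$ is of the form covered by Lemma \ref{key}. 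Consequently $\n_{M,N}(t_i) = \val(t_i)$ and $\n_{M,N}(t_is_j) = \val(t_i)+\val(s_j)$ by the multiplicativity of $\val$ on monomials (property 1).

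For the inequality $\n_{M,N}(fg)\geq\n_{M,N}(f)+\n_{M,N}(g)$, the plan is to combine subadditivity (property 3) with these identities:
\[
\n_{M,N}(x^{K_f+K_g}fg) \geq \min_{i,j}\bigl(\val(t_i)+\val(s_j)\bigr) = \min_i\val(t_i)+\min_j\val(s_j),
\]
and then use property 2 to cancel the common factor $(K_f+K_g)\b_0$.

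For the reverse inequality, the plan is to identify an explicit surviving term in the $(M,N)$-expansion of $x^{K_f+K_g}fg$. Let $\mathcal{F}_{\min}$, $\mathcal{G}_{\min}$ denote the sets of minimum-value terms in the $(M,N)$-expansions of $x^{K_f}f$ and $x^{K_g}g$. Since distinct terms occupy distinct positions $(a_1,\ldots,a_M,c_1,\ldots,c_N)$, one can uniquely select lex-maximal $t^*\in\mathcal{F}_{\min}$ and $s^*\in\mathcal{G}_{\min}$ under the ordering $(a_M,c_N,a_{M-1},c_{N-1},\ldots,a_1,c_1)$ on positions. By Lemma \ref{key}, the $(M,N)$-expansion of $t^*s^*$ then contains a unique minimum-value term whose $P_M$- and $Q_N$-exponents are exactly $a_M(t^*)+a_M(s^*)$ and $c_N(t^*)+c_N(s^*)$.

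The main obstacle will be to show that this distinguished term is not canceled by contributions from other products $t_is_j$ when assembling the $(M,N)$-expansion of $fg$. The plan is to argue that pairs $(i,j)\notin\mathcal{F}_{\min}\times\mathcal{G}_{\min}$ produce only terms of strictly larger value, while for any pair in $\mathcal{F}_{\min}\times\mathcal{G}_{\min}$ whose unique minimum-value term lands at the same $(M,N)$-position as the distinguished one, the matching of $P_M$- and $Q_N$-exponents together with the lex-maximality of $(t^*,s^*)$ and the uniqueness of reduced representations provided by Corollary \ref{order1} will force $(t_i,s_j)=(t^*,s^*)$. This produces the required nonzero term of value $\n_{M,N}(x^{K_f}f)+\n_{M,N}(x^{K_g}g)$ in the $(M,N)$-expansion of $x^{K_f+K_g}fg$, and property 2 then delivers the $\leq$ inequality, completing the proof.
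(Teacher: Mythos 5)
Your proposal is correct and follows essentially the same route as the paper: multiply the $(M,N)$-expansions term by term, apply Lemma \ref{key} to each monomial product, get $\geq$ from subadditivity, and get $\leq$ by exhibiting a distinguished minimal-value term that cannot cancel, using an extremal choice among minimal-value positions together with the uniqueness of reduced representations. The only cosmetic differences are that the paper selects the lexicographically \emph{minimal} pair $(a_M,c_N)$ rather than a lex-maximal full position, and it explicitly normalizes by a power of $x$ so that all the intermediate expansions have $K=0$ before adding them — a detail you should make explicit when writing this up.
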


\begin{proof}
We first notice that if the statement is true for $x^Lf$ and $x^Lg$ for some $L\in\ZZ_{\ge 0}$ then it is also true for $f$ and $g$. Consider the $(M,N)$-expansions 
$$x^{K_1}f=\sum_{a_M=0}^{q_f}\sum_{c_N=0}^{s_f}\left(\sum_{\substack{0\le a_i<q_i\\
0\le c_j<\bar{s}_j}}f_{AC}(x)\prod_{i=1}^{M-1}P_i^{a_i}\prod_{j=1}^{N-1}Q_j^{c_j}\right)P_M^{a_M}Q_N^{c_N}
$$ and
$$
x^{K_2}g=\sum_{\bar{a}_M=0}^{q_g}\sum_{\bar{c}_N=0}^{s_g}\left(\sum_{\substack{0\le \bar{a}_i<q_i\\
0\le \bar{c}_j<\bar{s}_j}}g_{\bar{AC}}(x)\prod_{i=1}^{M-1}P_i^{\bar{a}_i}\prod_{j=1}^{N-1}Q_j^{\bar{c}_j}\right)P_M^{\bar{a}_M}Q_N^{\bar{c}_N}.
$$
After possibly multiplying $f$ and $g$ by some power of $x$ we may assume that $K_1=0$, $K_2=0$ and the power of $x$ appearing on the left hand side in the $(M,N)$-expansion of $f_{AC}(x)g_{\bar{AC}}(x)\prod_{i=1}^M P_i^{a_i+\bar{a}_i}\prod_{j=1}^N Q_j^{c_j+\bar{c}_j}$ is 0 for all $AC$ and $\bar{AC}$ as above.

Let $I_f=\{AC\mid \n_{M,N}(f)=\val(f_{AC}\prod_{i=1}^{M}P_i^{a_i}\prod_{j=1}^{N}Q_j^{c_j})\}$ and $AC_f\in I_f$ be such that $((a_M)_f,(c_N)_f)=\min\{(a_M,c_N)\mid AC\in I_f\}$. We claim that $((a_M)_f,(c_N)_f)<(a_M,c_N)$ for any $AC\in I_f\setminus\{AC_f\}$. Indeed, assume that $AC,AC'\in I_f$ are such that $a_M=a'_M$ and $c_N=c'_N$. Then since
$$\ord(f_{AC})\b_0+\sum_{i=1}^{M-1}a_i\b_i+\sum_{j=1}^{N-1}c_j\bar{\g}_j=\ord(f_{AC'})\b_0+\sum_{i=1}^{M-1}a'_i\b_i+\sum_{j=1}^{N-1}c'_j\bar{\g}_j$$
by Lemma \ref{group representation} we have $a_i=a'_i$  and $c_j=c'_j$ for all $ i, j$ in the range. Thus $AC=AC'$. 

Similarly let $I_g=\{\bar{AC}\mid \n_{M,N}(g)=\val(g_{\bar{AC}}\prod_{i=1}^{M}P_i^{\bar{a}_i}\prod_{j=1}^{N}Q_j^{\bar{c}_j})\}$ and $\bar{AC}_g\in I_g$ be such that $((\bar{a}_M)_g,(\bar{c}_N)_g)=\min\{(\bar{a}_M,\bar{c}_N)\mid \bar{AC}\in I_g\}$. Then $((\bar{a}_M)_g,(\bar{c}_N)_g)<(\bar{a}_M,\bar{c}_N)$ for any $\bar{AC}\in I_g\setminus\{\bar{AC}_g\}$.

Notice that $fg=\sum_{AC,\bar{AC}}\left(f_{AC}(x)g_{\bar{AC}}(x)\prod_{i=1}^M P_i^{a_i+\bar{a}_i}\prod_{j=1}^N Q_j^{c_j+\bar{c}_j}\right)$ and the $(M,N)$-expansion of $fg$ can be obtained by adding the $(M,N)$-expansions of all terms on the right.  Let $h=f_{AC_f}(x)g_{\bar{AC}_g}(x)\prod_{i=1}^M P_i^{(a_i)_f+(\bar{a}_i)_g}\prod_{j=1}^N Q_j^{(c_j)_f+(\bar{c}_j)_g}$ and notice that $\val(h)=\val(f_{AC_f}(x)\prod_{i=1}^M P_i^{(a_i)_f}\prod_{j=1}^N Q_j^{(c_j)_f})+\val(g_{\bar{AC}_g}(x)\prod_{i=1}^M P_i^{(\bar{a}_i)_g}\prod_{j=1}^N Q_j^{(\bar{c}_j)_g})=\n_{M,N}(f)+\n_{M,N}(g)$.  

By Lemma \ref{key} the $(M,N)$-expansion of $h$ has a unique term of value $\n_{M,N}(f)+\n_{M,N}(g)$ and, moreover, the powers of $P_M$ and $Q_N$ in this term are $(a_M)_f+(\bar{a}_M)_g$ and $(c_N)_f+(\bar{c}_N)_g$, respectively. We claim that if $AC\neq AC_f$ or $\bar{AC}\neq \bar{AC}_g$ then 
the $(M,N)$-expansion of $f_{AC}g_{\bar{AC}}\prod_{i=1}^M P_i^{a_i+\bar{a}_i}\prod_{j=1}^N Q_j^{c_j+\bar{c}_j}$ does not have a term of value $\val(h)$ such that its powers of $P_M$ and $Q_N$ are $(a_M)_f+(\bar{a}_M)_g$ and $(c_N)_f+(\bar{c}_N)_g$, respectively. Indeed, set $h'=f_{AC}(x)g_{\bar{AC}}(x)\prod_{i=1}^M P_i^{a_i+\bar{a}_i}\prod_{j=1}^N Q_j^{c_j+\bar{c}_j}$. If $AC\notin I_f$ or $\bar{AC}_g\notin I_G$ then $\val(h')>\val(h)$ and, therefore, all terms in the $(M,N)$-expansion of $h'$ have values greater than $\val(h)$. If $AC\in I_f$ and $\bar{AC}\in I_g$ then $\val(h')=\val(h)$ and the only term of value $\val(h)$ in the $(M,N)$-expansion of $h'$ has powers of $P_M$ and $Q_N$ equal to $a_M+\bar{a}_M$ and $c_N+\bar{c}_N$, respectively. Then if $AC\neq AC_f$ or $\bar{AC}\neq\bar{AC}_g$ we have $((a_M)_f+(\bar{a}_M)_g,(c_N)_f+(\bar{c}_N)_g)<(a_M+\bar{a}_M,c_N+\bar{c}_N)$. This shows that the unique term of minimal value $\val(h)$ from the $(M,N)$-expansion of $h$ will not cancel  in the $(M,N)$-expansion of $fg$. Moreover, all other terms in the $(M,N)$-expansion of $fg$ will be of value greater than or equal to $\val(h)$. Thus $\n_{M,N}(fg)=\val(h)=\n_{M,N}(f)+\n_{M,N}(g)$.
\end{proof}

\begin{corollary}\label{valuation-intermidiate}
If $f,g\in k[x,y,z]\setminus\{0\}$ then $\n(fg)=\n(f)+\n(g)$. 
\end{corollary}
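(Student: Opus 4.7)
The plan is to reduce the claim about $\nu$ to the already-established multiplicativity of $\nu_{M,N}$ (Corollary \ref{M,N-valuation-intermidiate}) by choosing $M$ and $N$ large enough that $\nu_{M,N}$ simultaneously computes $\nu$ on $f$, $g$, and $fg$, while also satisfying the technical constraint $\bar m_{N-1}<M$ that the previous corollary requires.

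More precisely, I would proceed as follows. First, recall the key observation made just after the definition of the $(M,N)$-expansion: whenever $M'\ge M_0$ and $N'\ge N_0$, with $M_0,N_0$ the exponents produced by Theorem \ref{theorem1} for a given polynomial $h$, the nonzero terms of the $(M',N')$-expansion of $h$ coincide with the nonzero terms of representation (\ref{NUrepresentation}) for $h$, and consequently $\nu_{M',N'}(h)=\nu(h)$. Apply this to each of $f$, $g$, and $fg$ to obtain integers $M_f,N_f$, $M_g,N_g$, and $M_{fg},N_{fg}$ such that $\nu_{M,N}$ equals $\nu$ on each of these polynomials whenever $M\ge\max(M_f,M_g,M_{fg})$ and $N\ge\max(N_f,N_g,N_{fg})$.

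Next, I need to ensure that $M$ can also be taken to exceed $\bar m_{N-1}$. Since the hypothesis of Theorem \ref{main} gives infinitely many $q_i>1$ and infinitely many $\bar s_i>1$, Propositions \ref{P-degrees} and \ref{Q-degrees-intermediate} show that $\deg_y P_i$ and $\deg_z Q_i$ both tend to infinity. Thus, first choose $N\ge\max(N_f,N_g,N_{fg})$, then, noting that $\bar m_{N-1}$ is a fixed nonnegative integer, pick $M$ with $M\ge\max(M_f,M_g,M_{fg})$ and $M>\bar m_{N-1}$. For this choice the hypotheses of Corollary \ref{M,N-valuation-intermidiate} are satisfied, so
\begin{equation*}
\nu_{M,N}(fg)=\nu_{M,N}(f)+\nu_{M,N}(g).
\end{equation*}
Since by construction $\nu_{M,N}$ agrees with $\nu$ on each of $f$, $g$, and $fg$, this identity reads $\nu(fg)=\nu(f)+\nu(g)$, which is the desired conclusion.

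The only subtlety—and the main point that warrants care—is verifying that the \emph{same} choice of $(M,N)$ can be used to compute $\nu$ on all three of $f$, $g$, $fg$ simultaneously, and that the power of $x$ extracted in each $(M,N)$-expansion matches the $K$ from the canonical representation of Theorem \ref{theorem1}. Both follow from the stability observation above: enlarging $(M,N)$ beyond the minimal pair produced by Theorem \ref{theorem1} does not introduce new nonzero coefficients or alter the leading $K$, so $\nu_{M,N}(h)=\nu(h)$ persists. Once that consistency is in hand, the proof reduces to a single invocation of Corollary \ref{M,N-valuation-intermidiate}.
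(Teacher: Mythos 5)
Your proof is correct and follows exactly the paper's argument: pick $M,N$ large enough that $\nu_{M,N}$ agrees with $\nu$ on $f$, $g$, and $fg$ simultaneously and that $M>\bar m_{N-1}$, then apply Corollary \ref{M,N-valuation-intermidiate}. The extra care you take in justifying the stability of $\nu_{M,N}(h)=\nu(h)$ under enlarging $(M,N)$ is exactly the observation the paper records just after defining the $(M,N)$-expansion.
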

\begin{proof}
Let $M,N\in\ZZ_{>0}$ be such that $\n(f)=\n_{M,N}(f)$, $\n(g)=\n_{M,N}(g)$, $\n(fg)=\n_{M,N}(fig)$ and $M>\bar{m}_{N-1}$. Then the statement follows from Corollary \ref{M,N-valuation-intermidiate}.
\end{proof}

We now extend $\n$ to $k(x,y,z)\setminus \{0\}$. If $f\in k(x,y,z)\setminus\{0\}$ and $f_1,f_2\in k[x,y,z]\setminus\{0\}$ are such that $f=f_1/f_2$ then $\n(f)=\n(f_1)-\n(f_2)$. Due to Corollary \ref{valuation-intermidiate} $\n(f)$ does not depend on representation $f=f_1/f_2$. Thus $\n$ is well defined on $k(x,y,z)\setminus\{0\}$. Moreover, if $f,g\in k(x,y,z)\setminus\{0\}$ then $\n(f+g)\ge\min(\n(f),\n(g))$ and $\n(fg)=\n(f)\n(g)$. Thus $\n$ is a valuation on $k(x,y,z)$.

\begin{lemma}\label{dominate}
Suppose that $f\in k[x,y,z]_{(x,y,z)}$ is not equal to 0.  Then $\n(f)\ge 0$ and $\n(f)>0$ if and only if $f$ lies in the maximal ideal of $k[x,y,z]_{(x,y,z)}$.
\end{lemma}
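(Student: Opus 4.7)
The plan is to use the fact (just established immediately before this lemma) that $\n$ is a valuation on $k(x,y,z)$, reducing the claim to a monomial-level computation after first pinning down $\n$ on the basic generators.

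First I would verify directly from the definition that $\n(c)=0$ for $c\in k\setminus\{0\}$, $\n(x)=\b_0$, $\n(y)=\b_1$, and $\n(z)=\bar{\g}_1$: the trivial expressions $c=c\cdot 1$ (with $M=N=0$), $x=x\cdot 1$ (with $M=N=0$), $y=1\cdot P_1$ (with $M=1,N=0$), and $z=1\cdot Q_1$ (with $M=0,N=1$) already satisfy the uniqueness conditions of Theorem~\ref{theorem1}, because the constraints involving $a_M$ or $c_N$ are vacuous when $M$ or $N$ is zero. By hypothesis on the defining sequences, $\b_0,\b_1,\bar{\g}_1>0$. For a nonzero polynomial $g\in k[x,y,z]$, I would then expand in monomials $g=\sum c_{i,j,k}\,x^iy^jz^k$ and apply multiplicativity of $\n$ to obtain $\n(c_{i,j,k}\,x^iy^jz^k)=i\b_0+j\b_1+k\bar{\g}_1$ for each nonzero term. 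Every nonconstant monomial has strictly positive value, while the constant term (if present) is the unique one of value zero, so the strict triangle inequality for valuations yields $\n(g)=0$ when $g(0,0,0)\neq 0$ and $\n(g)>0$ when $g(0,0,0)=0$; in particular $\n(g)\geq 0$ always.

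Finally, an arbitrary nonzero $f\in k[x,y,z]_{(x,y,z)}$ has the form $f=g/h$ with $g,h\in k[x,y,z]$ and $h\notin(x,y,z)$. Then $h(0,0,0)\neq 0$ and the previous step gives $\n(h)=0$, so $\n(f)=\n(g)-\n(h)=\n(g)\geq 0$. Moreover $\n(f)>0$ is equivalent to $\n(g)>0$, hence to $g\in(x,y,z)$, hence to $f$ lying in the maximal ideal $(x,y,z)k[x,y,z]_{(x,y,z)}$.

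I do not foresee any genuine obstacle; the only care needed is in the first step, matching the trivial representations of $c,x,y,z$ against the uniqueness conventions of Theorem~\ref{theorem1}, which is routine because of the low complexity of these elements.
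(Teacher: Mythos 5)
Your proof is correct and follows essentially the same route as the paper: reduce to a sum of distinct monomials, observe every monomial has value $i\b_0+j\b_1+k\bar{\g}_1\ge 0$ (strictly positive unless constant), and use the ultrametric equality to isolate the constant term when $f$ is a unit. The only difference is that you explicitly treat the localization step $f=g/h$ with $h\notin(x,y,z)$, which the paper leaves implicit; otherwise the arguments coincide.
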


\begin{proof}
Assume that $f\in k[x,y,z]$ then $f=e_1f_1+e_2f_2+\dots+e_nf_n$, where for each $i$ we have $f_i=x^{a_i}y^{b_i}z^{c_i}$ is a monomial in $x,y,z$; $e_i\in k\setminus\{0\}$ and $(a_i,b_i,c_i)\neq (a_j,b_j,c_j)$ if $i\neq j$.  Then $\n(f)\ge\min(\n(e_if_i))=\min(\val(e_if_i))=\min(a_i\b_0+b_i\b_1+c_i\bar{\g}_i)\ge 0$. In particular, if $f\in(x,y,z)k[x,y,z]$ then $a_i+b_i+c_i>0$ for all $i$ and $\n(f)>0$. Finally, if $f\notin (x,y,z)k[x,y,z]$ then $a_i=b_i=c_i=0$ for some $i$ and since $\n(f-e_if_i)>0$ we have $\n(f)=\min(\n(e_if_i),\n(f-e_if_i))=0$.
\end{proof}

As a consequence of Lemma \ref{dominate} we get that $\n:k(x,y,z)\ra\QQ$ is a valuation on $k(x,y,z)$ dominating $k[x,y,z]_{(x,y,z)}$ with $\n(P_i)=\b_i$ and $\n(Q_i)=\bar{\g}_i$ for all $i$. If $\n':k(x,y,z)\ra\QQ$ is another valuation dominating $k[x,y,z]_{(x,y,z)}$ such that $\n'(P_i)=\b_i$ and $\n'(Q_i)=\bar{\g}_i$ for all $i$ then we check that $\n'(f)=\n(f)$ for any $f\in k[x,y,z]\setminus\{0\}$. To this end we fix $f\in k[x,y,z]\setminus\{0\}$, consider its representation (\ref{NUrepresentation})  
$$
x^Kf=\sum_{\substack{0\le a_i<q_i\\
0\le c_j<\bar{s}_j}}f_{AC}(x)\prod_{i=1}^MP_i^{a_i}\prod_{j=1}^NQ_j^{c_j}. 
$$
and evaluate $\n'$ of every term on the right. Since $\n'$ dominates $k[x,y,z]_{(x,y,z)}$ we have $\n'(a)=0$ for all $a\in k\setminus\{0\}$ and $\n'(f_{AC}(x))=\ord(f_{AC})\b_0$ provided $f_{AC}(x)\neq 0$. Thus, if $f_{AC}(x)\neq 0$ then
\begin{align*}
\n'(f_{AC}(x)\prod_{i=1}^MP_i^{a_i}\prod_{j=1}^NQ_j^{c_j}) &=\ord(f_{AC})\b_0+\sum_{j=1}^Ma_j\b_j+\sum_{j=1}^Nc_j\bar{\g}_j\\
& =\val(f_{AC}(x)\prod_{i=1}^MP_i^{a_i}\prod_{j=1}^NQ_j^{c_j}),
\end{align*} 
By Corollary \ref{order1} there exist a unique term $AC'=(a'_1,\dots,a'_M,c'_1,\dots,c'_N)$ such that 
\begin{align*}
\ord(f'_{AC})\b_0+\sum_{j=1}^M & a'_j\b_j+\sum_{j=1}^Nc'_j\bar{\g}_j =\\
&\min\{\ord(f_{AC})\b_0+\sum_{j=1}^Ma_j\b_j+\sum_{j=1}^Nc_j\bar{\g}_j\mid f_{AC}(x)\neq 0\}.
\end{align*}
Thus
 $\n'(f)=-K\b_0+\n'(f_{AC'}(x)\prod_{i=1}^MP_i^{a'_i}\prod_{j=1}^NQ_j^{c'_j})=\n(f)$. This completes the proof of Theorem \ref{main}.

\section{main example}\label{main_example}

In this section we provide an example of  valuation centered in $k[x,y,z]_{(x,y,z)}$. We use defining polynomials to construct a valuation and then work with the sequence of jumping polynomials to describe its value semigroup. In this example only one member of the sequence $\{\bar{r}_{i,0}\}_{i\ge 0}$ is greater than 0.

\begin{example} \label{main_ex}

Let $\b_0=1$, $\b_1=1\frac{1}{2}$, $\b_2=3\frac{1}{4}$, $\b_3=6\frac{5}{8},\dots$ and $\bar{\g}_1=2\frac{1}{4}$, $\bar{\g}_2=4\frac{1}{3}$, $\bar{\g}_3=13\frac{1}{9}$, $\bar{\g}_4=39\frac{10}{27}\dots$, where $\b_i=2\b_{i-1}+\frac{1}{2^i}$ and $\bar{\g}_{i+1}=3\bar{\g}_i+\frac{1}{3^i}$ for all $i>1$. Then for all $i\ge 1$ in notation of section \ref{numerical data} we have 
$$
G_i=\frac{1}{2^i}\ZZ\quad\text{and}\quad\bar{H}_{i+1}=\frac{1}{4\cdot3^i}\ZZ,\quad \text{while}\quad G_0=\ZZ\quad\text{and}\quad\bar{H}_1=\frac{9}{4}\ZZ.
$$
So $q_i=2$ for all $i>0$ and $\bar{s}_1=1$, $\bar{m}_1=2$,  and  $\bar{s}_i=3$, $\bar{m}_i=2$ for all $i>1$. Since 
$\bar{\g}_1=-\b_0+\b_2$ we have $\bar{r}_{1,0}=1$.  Also notice that $2\b_1=3\b_0$ and $2\b_i=5\cdot2^{i-2}\b_0+\b_{i-1}$ for all $i>1$, and $3\bar{\g}_2=13\b_0$ and $3\bar{\g}_i=35
 \cdot3^{i-3}\b_0+\bar{\g}_{i-1}$ for all $i>2$. In particular, this shows $\bar{r}_{i,0}=0$  for all $i\ge 2$.  

Let $\l_i=\bar{\m}_i=1$ for all $i>0$. Since the inequalities $\b_{i+1}>q_i\b_i$ and $\bar{\g}_{i+1}>\bar{r}_{i,0}\b_0+\bar{s}_i\bar{\g}_i$ are satisfied for all $i>0$ the set of polynomials $\{P_i\}_{i\ge 0}\cup\{Q_i\}_{i>0}$ as constructed in section \ref{numerical data} defines a valuation $\n$ on $k(x,y,z)$.

We have $P_0=x$, $P_1=y$, $P_2=y^2-x^3$ and  $Q_1=z$, $Q_2=xz-P_2$, $Q_3=Q_2^3-x^{13}$. The recursive formulas for $P_i$ and $Q_{i+1}$ when $i>2$ are
\begin{align*}
P_{i} &=P_{i-1}^2-x^{5\cdot 2^{i-3}}P_{i-2}\\
Q_{i+1}& =Q_i^3-x^{35\cdot 3^{i-3}}Q_{i-1}.
\end{align*}

We will construct several first members of the subsequence $\{T_i\}_{i>0}$ of jumping polynomials to understand the pattern for nonredundant jumping polynomials. We use $(M,N)$-expansions to find the required values and residues. 

Since $T_1=z$ we get $\g_1=2\frac{1}{4}$, $s_1=1$, $m_1=2$, $\D_1=\{(1,0,0,1),(0,0,0,2),(0,0,1,1)\}$ and $\d_1=3$. Since   $xT_1=P_2+Q_2$ we have 
$$ T_1^2=x^3P_1+x^{-2}(P_3+2P_2Q_2+Q_2^2),\quad P_2T_1=x^4P_1+x^{-1}(P_3+P_2Q_2).$$ 
Thus the immediate successors of $T_1$ are
$$
\begin{array}{ll}
T_2 =xT_1-P_2=Q_2,\quad &\g_2=\n(T_2)=4\frac{1}{3}\\
T_3 =T_1^2-x^3P_1=x^{-2}(P_3+2P_2Q_2+Q_2^2),\quad & \g_3=\n(T_3)=4\frac{5}{8}\\
T_4 =P_2T_1-x^4P_1=x^{-1}(P_3+P_2Q_2),\quad &  \g_4=\n(T_4)=5\frac{5}{8}\\
\end{array}
$$
We notice that $\g_4$ is a dependent value. Moreover, $T_4=xT_3-T_1T_2$. From further computations it will follow that $xT_3$ and $T_1T_2$ are both irreducible with respect to $\T$, and therefore, $T_4$ is redundant by Lemma \ref{redundant_suf}. 

Consider $T_2$ and $\g_2=4\frac{1}{3}$. We have $s_2=3$, $m_2=2$, $r_{2,0}=0$, $\D_2=\{(0,0,0,0,3)\}$ and $\d_2=1$. The only immediate successor of $T_2$ is
$$
T_5 =T_2^3-x^{13}=Q_3,\quad\quad\quad\quad\quad\quad\quad\quad \g_5=\n(T_5)=13\frac{1}{9}\\
$$

Consider $T_3$ and $\g_3=4\frac{5}{8}$. We have $s_3=1$, $m_3=3$, $r_{3,0}=2$, $\d_3=4$ and $\D_3=\{(2,0,0,0,0,0,1),(1,0,1,0,0,0,1),(0,0,0,0,0,0,2),(0,0,0,1,0,0,1)\}$.  Since  $x^2T_3=P_3+2P_2Q_2+Q_2^2$   we get
$$T_3^2=x^6P_2+x^{-4}(P_4+4P_2P_3Q_2+hvt),\quad P_3T_3=x^8P_2+x^{-2}(P_4+2P_2P_3Q_2 +hvt).$$
Also 
$$xP_2T_3=x^{-1}(P_2P_3+2x^5P_1Q_2+2P_3Q_2+hvt)\quad  \text{ and } \quad P_3T_1=x^{-1}(P_2P_3+P_3Q_2).$$
Thus the immediate successors of $T_3$ are
$$
\begin{array}{ll}
T_6 =x^2T_3-P_3=2P_2Q_2+Q_2^2,\quad & \g_6=\n(T_6)=7\frac{7}{12}\\
T_7 =xP_2T_3-P_3T_1=2x^4P_1Q_2+x^{-1}(P_3Q_2+hvt),\quad &  \g_7=\n(T_7)=9\frac{1}{4}\\
T_8 =T_3^2-x^6P_2=x^{-4}(P_4+4P_2P_3Q_2+hvt),\quad & \g_8=\n(T_8)=9\frac{5}{16}\\
T_9 =P_3T_3-x^8P_2=x^{-2}(P_4+2P_2P_3Q_2+hvt),\quad &  \g_9=\n(T_9)=10\frac{5}{16}\\
\end{array}
$$
We notice that $\g_6, \g_7,\g_9$ are dependent values. Moreover, $T_6, T_7, T_9$ are redundant jumping polynomials since $T_6=2P_2T_2+T_2^2$, $T_7=2x^4P_1T_2+xT_2T_3-T_1T_2^2$ and $T_9=x^2T_8-2P_2T_2T_3-T_2^2T_3$.

It appears that the following sequence of polynomials $\{R_i\}_{i>0}$ is of interest: $R_1=T_2=Q_1$, $R_2=R_1^2-x^3P_1$ and $R_i=R_{i-1}^2-x^{3\cdot 2^{i-2}}P_{i-1}$ for all $i>2$. 

\begin{conjecture}
Suppose that $j\in\ZZ_{\ge 0}$ is such that $T_j$ is a nonredundant jumping polynomial. Then there exists $i\in\ZZ_{\ge 0}$ such that $T_j=Q_i$ or $T_j=R_i$. 
\end{conjecture}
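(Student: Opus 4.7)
My plan is to prove the following stronger claim by strong induction on $j$: \emph{the nonredundant jumping polynomials in the sequence $\{T_j\}_{j>0}$ are precisely $T_1=Q_1=R_1$ together with $Q_i$ and $R_i$ for $i\ge 2$. Moreover, $T_1$ has exactly two nonredundant immediate successors (namely $Q_2$ and $R_2$), while for $i\ge 2$ the only nonredundant immediate successor of $Q_i$ is $Q_{i+1}$ and the only nonredundant immediate successor of $R_i$ is $R_{i+1}$.} The conjecture then follows, since by construction every nonredundant $T_j$ is reached from $T_1$ through a chain of immediate successors in which at least one nonredundant element appears at every level.

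The first preparation is a computation of values. By construction $\n(Q_i)=\bar{\g}_i$, and the recursion $R_i=R_{i-1}^2-x^{3\cdot 2^{i-2}}P_{i-1}$ together with $\b_i=2\b_{i-1}+1/2^i$ forces $\n(R_{i-1}^2)=\n(x^{3\cdot 2^{i-2}}P_{i-1})$ at each stage, so $\n(R_i)$ strictly exceeds this common value; tracking the successive differences yields an explicit closed form for $\n(R_i)$. Using these values I would then apply Lemma~\ref{group representation} to the $2$-adic structure of $G$ and the $3$-adic structure of $\bar{H}$ in order to describe, for each $T_j\in\{Q_i,R_i\}$, the numbers $s_j$, $m_j$ and the full set $\D_j$.

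The inductive step has two parts. For the \emph{existence} part I verify that $Q_{i+1}$ is an immediate successor of $Q_i$ and $R_{i+1}$ is an immediate successor of $R_i$, by matching the binomial formula from section~\ref{construction} with the defining recursions in section~\ref{numerical data} (and with the recursion for $R_i$); nonredundancy then follows from the fact that $Q_{i+1},Q_{i+2},\dots$ form an infinite chain of nonzero successors of $Q_i$, and similarly for $R_i$. For the \emph{redundancy} part, I must exhibit, for every other $\bar{d}\in\D_j$, a polynomial identity writing $T_{\bar{d}}$ as a sum $\sum_e\th_e\prod_k P_k^{a_{e,k}}\prod_k T_k^{c_{e,k}}$ of monomials each irreducible with respect to $\T$ (where the inner $T$'s are drawn from the previously-constructed $Q$'s and $R$'s), and then invoke Lemma~\ref{redundant_suf}. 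The example supplies the model identities $T_4=xT_3-T_1T_2$, $T_6=2P_2T_2+T_2^2$, $T_7=2x^4P_1T_2+xT_2T_3-T_1T_2^2$ and $T_9=x^2T_8-2P_2T_2T_3-T_2^2T_3$, and analogous identities should appear at each level of the $Q$- and $R$-trees, derivable by combining the recursions with the multinomial expansions of $Q_i^{\bar{s}_i}$ and $R_i^2$.

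The main obstacle is producing these redundancy-witnessing identities uniformly in $i$. Enumerating $\D_j$ for $T_j=Q_i$ or $T_j=R_i$ requires inverting the $2$-adic and $3$-adic relations among the $\b$'s, $\bar{\g}$'s and $\n(R_k)$'s to pin down all reduced tuples, and then for each extraneous tuple one must guess the correct binomial cancellation expressing $T_{\bar{d}}$ as a sum of strictly increasing-value terms in the admissible generators. I expect to need an auxiliary expansion lemma describing $Q_i^{\bar{s}_i}$ modulo $\A_{\bar{s}_i\bar{\g}_i}^+$ and $R_i^2$ modulo its analogous ideal in terms of $\{P_k\}\cup\{Q_k\}\cup\{R_k\}$, so that the required identities can be read off once and for all rather than case by case.
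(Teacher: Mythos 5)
First, a point of comparison: the paper does not prove this statement at all. It is stated as a conjecture, and the author writes explicitly that no proof will be given ``due to its length and technicality''; the paper instead proves only the weaker consequence that the value semigroup is generated by $\{\n(P_i)\}\cup\{\n(Q_i)\}\cup\{\n(R_i)\}$, via the machinery of admissible monomials and Lemmas \ref{ExL1}--\ref{ExL3}. So there is no proof in the paper against which your argument can be checked step by step.

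Your proposal is a reasonable strategy outline, and its skeleton is sound: the redundancy definition propagates down the successor tree (a nonzero redundant $T_i$ has a unique immediate successor which is again redundant), so if at every nonredundant node all immediate successors except the designated $Q$- or $R$-successor are shown redundant, every nonredundant $T_j$ must lie on one of the two chains. The appeal to Lemma \ref{redundant_suf} as the redundancy certificate, and to Lemma \ref{group representation} for the arithmetic of $\D_j$, is the right mechanism. But the proposal is not a proof: the two load-bearing steps are left as expectations rather than arguments. You do not actually enumerate $\D_j$ when $T_j=Q_i$ or $T_j=R_i$ (note that $\d_1=3$ and $\d_3=4$ already in the example, so these sets are not small and their size depends on $\bar{r}_{i,0}$ and on which values are dependent), and you do not produce, uniformly in $i$, the identities $T_{\bar d}=\sum_e\th_e\prod P_k^{a_{e,k}}\prod T_k^{c_{e,k}}$ with each monomial irreducible with respect to $\T$ that Lemma \ref{redundant_suf} requires. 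You acknowledge this yourself (``the main obstacle is producing these redundancy-witnessing identities uniformly in $i$''; ``I expect to need an auxiliary expansion lemma''). That obstacle is precisely the content of the conjecture, and it is exactly the length-and-technicality the paper sidesteps. One further caution: the needed expansion lemma is delicate because the witnessing monomials must be irreducible with respect to the full set $\T$, which is only known a posteriori, once the entire successor tree has been classified; so the induction must be set up to carry irreducibility information forward, not merely values. As written, the proposal identifies a plausible route but does not establish the statement.
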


We will not provide a proof for the conjecture due to its length and technicality. Instead we will notice the following implication of the conjecture:  the value semigroup $\n(k[x,y,z]_{(x,y,z)})$ is generated by the set of values $\{\n(P_i)\}_{i\ge 0}\cup\{\n(Q_i)\}_{i\ge 1}\cup\{\n(R_i)\}_{i\ge 1}$. This weaker statement will be the main statement of Example \ref{main_ex}. To prove it we develop some terminology and look at the properties  of the sequence $\{R_i\}_{i\ge 1}$.

We say that $f\in k(x,y,z)$ is an admissible monomial in $(P_m,Q_n)$ if $f$ can be written as $\l\prod_{i=0}^m P_i^{a_i}\prod_{j=2}^n Q_j^{b_j}$, where $\l\in k$, $a_0\in\ZZ_{\ge 0}$, $a_i\in\{0,1\}$ for all $i\ge 1$ and $b_j\in\{0,1,2\}$ for all $j\ge 2$. Denote the set of all admissible monomials in $(P_m,Q_n)$ by $Mon(P_m, Q_n)$. We say that $f\in k(x,y,z)$ is an admissible monomial in $(P_m,Q)$ if there exists $n$ such that $f\in Mon(P_m, Q_n)$.  Denote the set of all admissible monomials in $(P_m,Q)$ by $Mon(P_m, Q)$.

 We say that $f\in k(x,y,z)$ is an admissible monomial in $(Q_n, R_m)$ if $f$ can be written as $\l x^ay^b\prod_{j=2}^n Q_j^{b_j}\prod_{i=1}^m R_i^{c_i}$, where $\l\in k$, $a\in\ZZ_{\ge 0}$, $b_j\in\{0,1,2\}$ for all $j\ge 2$ and $b, c_i\in\{0,1\}$ for all $i\ge 1$. Denote the set of all admissible monomials in $(Q_n, R_m)$ by $Mon(Q_n,R_m)$. We say that $f\in k(x,y,z)$ is an admissible monomial in $(Q, R_m)$ if there exists $n$ such that $f\in Mon( Q_n, R_m)$.  Denote the set of all admissible monomials in $(Q,R_m)$ by $Mon(Q, R_m)$. Finally, we say that $f\in k(x,y,z)$ is an admissible monomial in $(Q, R)$ if there exists $m$ such that $f\in Mon( Q, R_m)$.  Denote the set of all admissible monomials in $(Q,R)$ by $Mon(Q, R)$.

 We say that $f\in k(x,y,z)$ is an admissible polynomial in $(P_m,Q_n)$ (or $(P_m,Q)$,  or $(Q_n,R_m)$, or $(Q,R_m)$, or $(Q,R)$) if $f$ can be written as a sum of admissible monomials in $(P_m,Q_n)$ (or $(P_m,Q)$, or $(Q_n,R_m)$, or $(Q,R_m)$, or $(Q,R)$, respectively). Denote the set of all admissible polynomials in  $(P_m,Q_n)$ (or $(P_m,Q)$, or $(Q_n,R_m)$, or $(Q,R_m)$, or $(Q,R)$)  by $Poly(P_m, Q_n)$ (or $Poly(P_m,Q)$, or $Poly(Q_n,R_m)$, or $Poly(Q,R_m)$, or $Poly(Q,R)$, respectively).

\begin{lemma}\label{ExL1}
Suppose that  $f,g\in Poly(P_m,Q_1)$ then $fg\in Poly(P_{m+1},Q_1)$.
\end{lemma}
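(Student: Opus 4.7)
The plan is to view $fg$ as an element of $k[x,y]$ and apply the $P$-expansion from Lemma \ref{lemma1}. Since $Poly(P_m,Q_1)$ admits no $Q_j$ factors (the product $\prod_{j=2}^{1}$ is empty), each admissible monomial has the form $\lambda x^{a_0}\prod_{i=1}^{m}P_i^{a_i}$ with $a_i\in\{0,1\}$ for $i\ge 1$, so both $f$ and $g$, and hence $fg$, lie in $k[x,y]$.

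First I would bound the $y$-degree of $fg$. By Proposition \ref{P-degrees} combined with the fact that $q_i=2$ throughout this example, $\deg_y P_i=2^{i-1}$. Summing over the admissible exponents, any admissible monomial in $(P_m,Q_1)$ has $y$-degree at most $\sum_{i=1}^{m}2^{i-1}=2^m-1$, so $\deg_y f,\deg_y g\le 2^m-1$ and consequently $\deg_y(fg)\le 2^{m+1}-2<2^{m+1}=\deg_y P_{m+2}$.

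Next I would invoke Lemma \ref{lemma1} with $M=m+1$ to obtain the unique representation
$$fg=\sum_{0\le a_i<q_i}h_{a_1,\ldots,a_{m+1}}(x)\prod_{i=1}^{m+1}P_i^{a_i},$$
with $h_{a_1,\ldots,a_{m+1}}\in k[x]$. Because each $q_i=2$, every exponent $a_i$ lies in $\{0,1\}$. Expanding each coefficient $h_{a_1,\ldots,a_{m+1}}(x)$ as a $k$-linear combination of monomials $x^{a_0}$ realizes $fg$ as a finite sum of admissible monomials in $(P_{m+1},Q_1)$, which is exactly what is needed.

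There is no serious obstacle; the argument reduces to the $y$-degree estimate that enables application of the $P$-expansion lemma. The only subtlety worth double-checking is that the uniform value $q_i=2$ forces the $P$-expansion to have precisely the shape demanded by an admissible polynomial in $(P_{m+1},Q_1)$, and that the bound $\deg_y(fg)<\deg_y P_{m+2}$ is strict, which it is since $2^{m+1}-2<2^{m+1}$.
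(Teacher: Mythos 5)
Your proof is correct and follows essentially the same route as the paper: bound $\deg_y(fg)$ by $2^{m+1}-2<2^{m+1}=\deg_y P_{m+2}$ and then apply the $P$-expansion of Lemma \ref{lemma1} with $M=m+1$, noting that $q_i=2$ forces all exponents $a_i\in\{0,1\}$. The paper's proof is just a terser version of the same degree estimate plus appeal to Lemma \ref{lemma1}.
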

\begin{proof}
We notice that if $f,g\in Poly(P_m,Q_1)$ then $f,g\in k[x,y]$ and $\deg_y f<2^m$, $\deg_y g<2^m$. Thus $(fg)\in k[x,y]$ and $\deg_y (fg)<2^{m+1}$.  Then the statement follows from Lemma \ref{lemma1}. 
\end{proof}

\begin{lemma}\label{ExL2}
Suppose that  $n\ge 2$ and $f,g\in Mon(P_0,Q_n)$ then $fg=h_1+h_2Q_{n+1}$ where $h_1,h_2\in Poly(P_0,Q_n)$.
\end{lemma}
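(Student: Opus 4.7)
The plan is to induct on $n$.

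For the base case $n=2$, any $f,g \in Mon(P_0,Q_2)$ are single monomials $\lambda_i x^{a_i} Q_2^{b_i}$ with $b_i \in \{0,1,2\}$, so $fg$ is a single monomial with $Q_2$-exponent at most $4$. When this exponent is $\le 2$ the product is already in $Poly(P_0,Q_2)$; when it equals $3$ or $4$, I would apply the defining relation $Q_2^3 = Q_3 + x^{13}$ (from $Q_3 = Q_2^3 - x^{13}$) once, which separates off a single $Q_3$-factor and absorbs the remainder into $Poly(P_0, Q_2)$.

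For the inductive step ($n \ge 3$), assume the lemma at index $n-1$. Factor $f = f_0 Q_n^{b_n}$, $g = g_0 Q_n^{c_n}$ with $f_0, g_0 \in Mon(P_0, Q_{n-1})$ and $b_n, c_n \in \{0,1,2\}$. The inductive hypothesis applied to the pair $(f_0, g_0)$ yields $f_0 g_0 = h + \tilde h\, Q_n$ with $h, \tilde h \in Poly(P_0, Q_{n-1})$, so
\[
fg \;=\; h\, Q_n^{e_1} + \tilde h\, Q_n^{e_2},\qquad e_1 = b_n + c_n \in \{0,\dots,4\},\ e_2 = e_1 + 1 \in \{1,\dots,5\}.
\]
Everything then reduces to the intermediate claim: for every $r \in Poly(P_0, Q_{n-1})$ and every $e \in \{0,1,\dots,5\}$, $r\,Q_n^e = A_e + B_e\, Q_{n+1}$ with $A_e, B_e \in Poly(P_0, Q_n)$. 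I would prove this claim using the recursion $Q_n^3 = Q_{n+1} + x^{35\cdot 3^{n-3}} Q_{n-1}$: for $e \le 2$ no reduction is needed, for $e \in \{3,4\}$ one application of the relation suffices, and for $e = 5$ the relation is applied twice.

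The main technical obstacle lies in this intermediate claim. Substituting the recursion produces a factor $Q_{n-1}$ multiplied against $r$, and individual monomials $\mu \in Mon(P_0, Q_{n-1})$ of $r$ can end up with $Q_{n-1}$-exponent equal to $3$, failing admissibility. I would resolve this by invoking the inductive hypothesis monomial-by-monomial on each product $\mu \cdot Q_{n-1}$, which rewrites it as an element of $Poly(P_0, Q_{n-1}) + Poly(P_0, Q_{n-1})\,Q_n \subseteq Poly(P_0, Q_n)$. For $e = 5$ this rewrite leaves behind a residual $Q_n^3$-factor that itself must be expanded by the recursion a second time, producing yet another $Q_{n-1}$-factor to which the inductive hypothesis is re-applied; the process terminates after this second pass since each reduction strictly decreases the $Q_n$-exponent. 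Combining everything gives
\[
fg \;=\; \bigl(h A_{e_1} + \tilde h A_{e_2}\bigr) + \bigl(h B_{e_1} + \tilde h B_{e_2}\bigr)\, Q_{n+1},
\]
with both coefficients in $Poly(P_0, Q_n)$, which is the desired decomposition.
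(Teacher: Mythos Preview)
Your proposal is correct and follows essentially the same route as the paper: induction on $n$, peeling off the $Q_n$-powers, applying the inductive hypothesis to the $Mon(P_0,Q_{n-1})$ parts, and then using the recursion $Q_n^3=Q_{n+1}+x^{35\cdot3^{n-3}}Q_{n-1}$ together with the inductive hypothesis applied monomial-by-monomial to absorb the stray $Q_{n-1}$ factor. The paper organises the case analysis slightly differently (it first settles the case $b+t\le 2$ and then reduces $b+t\ge 3$ to it, rather than stating your intermediate claim for all $e\le 5$), but the content is the same. One small slip: in your final display the $A_e$, $B_e$ already depend on $r$ by your intermediate claim, so the extra factors $h,\tilde h$ should not appear; the correct conclusion is $fg=(A_{e_1}+A_{e_2})+(B_{e_1}+B_{e_2})Q_{n+1}$.
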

\begin{proof}
We use induction on $n$. If  $n=2$ then $f=\l x^aQ_2^b$ and $g=\m x^rQ_2^t$, where $a,r\in\ZZ_{\ge 0}$ and $b,t\in\{0,1,2\}$.  If $b+t<3$ then $fg=\l\m x^{a+r}Q_2^{b+t}\in Mon(P_0, Q_2)$. If $b+t\ge 3$ then write $Q_2^{b+t}=(x^{13}+Q_3)Q_2^{b+t-3}$ to get
$$
fg=\l\m x^{a+r+13}Q_2^{b+t-3}+\l\m x^{a+r}Q_2^{b+t-3}Q_3=h_1+h_2Q_3,
$$
where $h_1,h_2\in Mon(P_0,Q_2)$.

Assume $n>2$ then $f=f'Q_n^b$ and $g=g'Q_n^t$, where $f',g'\in Mon(P_0,Q_{n-1})$ and $b,t\in\{0,1,2\}$. By the inductive hypothesis we have 
$f'g'=h'_1+h'_2Q_n$, where $h'_1,h'_2\in Poly(P_0, Q_{n-1})$. If $b+t<2$ then $fg=h'_1Q_n^{b+t}+h'_2Q_n^{b+t+1}\in Poly(P_0,Q_n)$. We write $Q_n^3=x^{35\cdot 3^{n-3}}Q_{n-1}+Q_{n+1}$ to get the required representation when $b+t\ge 2$.

Assume first that $b+t=2$ then
$
fg=h_1'Q_n^2+h'_2(x^{35\cdot 3^{n-3}}Q_{n-1})+h_2'Q_{n+1}.
$
Let $e\in Mon(P_0,Q_{n-1})$ be an admissible monomial in the representation of $h'_2$. By the inductive hypothesis we have
$$
e(x^{35\cdot 3^{n-3}}Q_{n-1})=e_1+e_2Q_n,
$$
where $e_1,e_2\in Poly(P_0,Q_{n-1})$. This shows that $h'_2(x^{35\cdot 3^{n-3}}Q_{n-1})=\bar{h}_1+\bar{h}_2Q_n$ for some $\bar{h}_1,\bar{h}_2\in Poly(P_0,Q_{n-1})$. Thus 
$$
fg=\bar{h}_1+\bar{h}_2Q_n+h'_1Q_n^2+h'_2Q_{n+1}=h_1+h_2Q_{n+1},
$$
where $h_1,h_2\in Poly(P_0,Q_n)$. 

Assume now that  $b+t\ge 3$ then 
$$
fg=(h_1'+h'_2Q_n)x^{35\cdot 3^{n-3}}Q_{n-1}Q_n^{b+t-3}+(h'_1Q_n^{b+t-3}+h_2'Q_n^{b+t-2})Q_{n+1}.
$$
Notice that $b+t-2\le 2$. Thus, if $\bar{h}=h'_1Q_n^{b+t-3}+h_2'Q_n^{b+t-2}$ then $\bar{h}\in Poly(P_0,Q_n)$. Also if $e\in Mon(P_0,Q_n)$ is an admissible monomial in the expansion of $h'_1+h'_2Q_n$ then $e=e'Q_n^a$, where $e'\in Mon(P_0,Q_{n-1})$ and  $a \in\{0,1\}$. So, the product of monomials $e'Q_n^a x^{35\cdot 3^{n-3}}Q_{n-1}Q_n^{b+t-3}$ satisfies the condition $b+t-3+a\le 2$. Thus by the argument above we have  
$$
ex^{35\cdot 3^{n-3}}Q_{n-1}Q_n^{b+t-3}=e_1+e_2Q_{n+1},
$$ 
where $e_1,e_2\in Poly(P_0,Q_n)$. This shows that 
$$(h_1'+h'_2Q_n)x^{35\cdot 3^{n-3}}Q_{n-1}Q_n^{b+t-3}=\bar{h}_1+\bar{h}_2Q_{n+1}$$
 for some $\bar{h}_1,\bar{h}_2\in Poly(P_0,Q_n)$. Thus, $
fg=\bar{h}_1+(\bar{h}_2+\bar{h})Q_{n+1}$is the required representation.
\end{proof}

\begin{corollary}\label{ExCor} 
If $f,g\in Mon(P_m,Q_n)$ then $fg\in Poly(P_{m+1},Q_{n+1})$.

If $f,g\in Poly(P_m,Q_n)$ then $fg\in Poly(P_{m+1},Q_{n+1})$.

If $f,g\in Poly(P_m,Q)$ then $fg\in Poly(P_{m+1},Q)$.

If $h_1,h_2,\dots, h_k\in Poly(P_0,Q)$ then $h_1h_2\cdots h_k\in Poly(P_0,Q)$.
\end{corollary}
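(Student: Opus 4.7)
The plan is to prove the four claims in sequence, with the first being the main content and the others following by straightforward bookkeeping. The natural strategy is a ``separation of variables'' argument: an admissible monomial $f \in Mon(P_m, Q_n)$ factors as $f = f_P \cdot f_Q$, where $f_P = \lambda \prod_{i=0}^m P_i^{a_i} \in Mon(P_m, Q_1)$ carries the $P$-data and $f_Q = \prod_{j=2}^n Q_j^{b_j} \in Mon(P_0, Q_n)$ carries the $Q$-data. Applying this factorization to both $f$ and $g$ in part (1), the product becomes $fg = (f_P g_P)(f_Q g_Q)$, and the two lemmas in the previous excerpt handle the two factors separately.

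For part (1), Lemma \ref{ExL1} gives $f_P g_P \in Poly(P_{m+1}, Q_1)$, and Lemma \ref{ExL2} gives $f_Q g_Q = h_1 + h_2 Q_{n+1}$ with $h_1, h_2 \in Poly(P_0, Q_n)$. Then $fg = (f_P g_P) h_1 + (f_P g_P) h_2 \, Q_{n+1}$. Because a monomial in $Mon(P_{m+1}, Q_1)$ times a monomial in $Mon(P_0, Q_n)$ has disjoint generator supports (the $P_i$ with $i \geq 1$ only appear in the first factor, the $Q_j$ with $j \geq 2$ only in the second, and the $x$-exponents simply add), the product lies in $Mon(P_{m+1}, Q_n)$. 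Multiplying the second summand by $Q_{n+1}$ raises the $Q_{n+1}$-exponent from $0$ to $1$, still within the admissible range. So both summands lie in $Poly(P_{m+1}, Q_{n+1})$. The only subtle point here is to check that the exponent restrictions ($a_i \in \{0,1\}$ for $i \geq 1$ and $b_j \in \{0,1,2\}$) are genuinely preserved when combining the outputs of the two lemmas -- this is immediate once one observes the disjointness of supports.

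Parts (2) and (3) are easy consequences. For part (2), write $f = \sum_\alpha f^{(\alpha)}$ and $g = \sum_\beta g^{(\beta)}$ as sums of admissible monomials in $(P_m, Q_n)$, expand $fg = \sum_{\alpha,\beta} f^{(\alpha)} g^{(\beta)}$, and apply part (1) to each term. For part (3), given $f, g \in Poly(P_m, Q)$ pick any common $n$ with $f, g \in Poly(P_m, Q_n)$ and invoke part (2), then observe $Poly(P_{m+1}, Q_{n+1}) \subseteq Poly(P_{m+1}, Q)$.

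For part (4), one cannot simply iterate part (3), since that would force the $P$-index to climb with each multiplication. The key observation is that in the special case $m = 0$, the argument in part (1) keeps everything in $Poly(P_0, Q)$: here $f_P, g_P$ are just powers of $x$, so $f_P g_P$ is again a power of $x$ (no Lemma \ref{ExL1} needed to raise the $P$-index), and Lemma \ref{ExL2} already outputs admissible polynomials in $(P_0, Q_{n+1})$. So the genuine statement in the $m=0$ case is that $Poly(P_0, Q)$ is closed under multiplication of two elements, and part (4) follows by induction on $k$. I expect the main obstacle to be the careful exponent-bookkeeping in part (1), specifically verifying that the $b_{n+1}$-exponent introduced by Lemma \ref{ExL2} does not conflict with any pre-existing $Q_{n+1}$-factor -- but since the inputs to that lemma live in $Mon(P_0, Q_n)$, no such factor is present, and the verification is routine.
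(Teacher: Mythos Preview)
Your proof is correct and follows essentially the same approach as the paper's. The paper proves part (1) by the identical separation-of-variables factorization $f=f_1f_2$, $g=g_1g_2$ with $f_1,g_1\in Mon(P_m,Q_1)$ and $f_2,g_2\in Mon(P_0,Q_n)$, applying Lemma~\ref{ExL1} and Lemma~\ref{ExL2} to the two factors, and then observing that any $e_1\in Mon(P_{m+1},Q_1)$ times any $e_2\in Mon(P_0,Q_{n+1})$ lies in $Mon(P_{m+1},Q_{n+1})$; your disjoint-support observation is exactly this. The paper simply declares parts (2)--(4) ``trivial'', whereas you spell out the reason part (4) works: in the case $m=0$ the $P$-factor is just a power of $x$, so Lemma~\ref{ExL2} alone shows $Poly(P_0,Q)$ is closed under binary products, and induction on $k$ finishes. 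This is indeed the intended argument, and your explicitness here is an improvement over the paper's brevity.
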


\begin{proof} Only the first statement is nontrivial. We notice that  if  $f,g\in Mon (P_m, Q_n)$  there exist $f_1,g_1\in Mon(P_m,Q_1)$ and $f_2,g_2\in Mon(P_0, Q_n)$ such that $f=f_1f_2$ and $g=g_1g_2$. Then $f_1g_1\in Poly(P_{m+1},Q_1)$ by Lemma \ref{ExL1} and $f_2g_2\in Poly(P_0,Q_{n+1})$ by Lemma \ref{ExL2}. Let $e_1\in Mon(P_{m+1},Q_1)$ be an admissible monomial in the expansion of $f_1g_1$ and $e_2\in Mon(P_0,Q_{n+1})$ be an admissible monomial in the expansion of $f_2g_2$ then $e_1e_2\in Mon(P_{m+1},Q_{n+1})$. This shows that $fg=f_1g_1f_2g_2\in Poly(P_{m+1},Q_{n+1})$. 
\end{proof}

In the proof of the next statement the following property of the sequence $\{\b_i\}_{i\ge 0}$ is  used: if $i\ge 2$ then $\b_{i+1}=\sum_{j=2}^i\b_j+\b_2+(1/4-1/2^{i+1})$. Indeed, $\b_3=\b_2+\b_2+1/8$ and for $i>2$ we have $\b_{i+1}=2\b_i+1/2^{i+1}=\b_i+\sum_{j=2}^{i-1}\b_j+\b_2+(1/4-1/2^i)+1/2^{i+1}=\sum_{j=2}^i\b_j+\b_2+(1/4-1/2^{i+1})$. In particular, $\b_{i+1}<\sum_{j=2}^i\b_j+3\frac{1}{2}$ for all $i\ge 2$. 

\begin{lemma}\label{PQ-expand}
Suppose that $i\ge 1$ then $x^{2^{i-1}}R_i=P_{i+1}+2^{i-1}Q_2\prod_{j=2}^iP_j+h_i$, where $h_i\in Poly(P_i,Q_i)$ and $\n(h_i)>\n(Q_2\prod_{j=2}^iP_j)>\n(P_{i+1})$. 

In particular, $x^{2^{i-1}}R_i=P_{i+1}+r_i$, where $r_i\in Poly(P_i,Q)$ and $\n(r_i)>\b_{i+1}$, and $\n(R_i)=\b_{i+1}-2^{i-1}\b_0$.
\end{lemma}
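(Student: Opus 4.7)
The plan is to prove the statement by induction on $i\ge 1$, leveraging the parallel recursions $R_{i+1}=R_i^2-x^{3\cdot 2^{i-1}}P_i$ (which covers both $R_2=R_1^2-x^3P_1$ and the general recursion) and $P_{i+2}=P_{i+1}^2-x^{5\cdot 2^{i-1}}P_i$. The base case $i=1$ reduces to the identity $xR_1=xz=P_2+Q_2$, which follows directly from $Q_2=xz-P_2$ and gives $h_1=0$ (so the inequality on $\n(h_1)$ holds vacuously). Since the empty product $\prod_{j=2}^{1}P_j$ equals $1$, this matches the claimed shape.

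For the inductive step, the key algebraic move is to square the inductive hypothesis and subtract $x^{5\cdot 2^{i-1}}P_i$:
\begin{equation*}
x^{2^i}R_{i+1}=(x^{2^{i-1}}R_i)^2-x^{5\cdot 2^{i-1}}P_i=\bigl(P_{i+1}^2-x^{5\cdot 2^{i-1}}P_i\bigr)+2^i P_{i+1}Q_2\prod_{j=2}^iP_j+E_i,
\end{equation*}
where $E_i=2P_{i+1}h_i+\bigl(2^{i-1}Q_2\prod_{j=2}^iP_j\bigr)^2+2^i Q_2\prod_{j=2}^iP_j\cdot h_i+h_i^2$. The first bracket is $P_{i+2}$ by the recursion for $P$, and the middle term simplifies to $2^iQ_2\prod_{j=2}^{i+1}P_j$. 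Setting $h_{i+1}=E_i$ furnishes the decomposition for $i+1$.

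Two things then need to be verified. First, $h_{i+1}\in Poly(P_{i+1},Q_{i+1})$: the three quadratic-type summands in $E_i$ are each products of two elements of $Poly(P_i,Q_i)$ and hence lie in $Poly(P_{i+1},Q_{i+1})$ by Corollary \ref{ExCor}; the term $2P_{i+1}h_i$ is handled by observing that multiplying an admissible monomial in $(P_i,Q_i)$ by the single factor $P_{i+1}$ raises the $P_{i+1}$-power from $0$ to $1$, keeping the product in $Poly(P_{i+1},Q_i)\subset Poly(P_{i+1},Q_{i+1})$. Second, the strict value inequalities: the crucial input is the telescoping estimate $\b_{i+1}=\sum_{j=2}^i\b_j+\b_2+(1/4-1/2^{i+1})<\sum_{j=2}^i\b_j+\g_2$, explicitly noted in the paragraph preceding the lemma. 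This gives both $\n(P_{i+2})=\b_{i+2}<\g_2+\sum_{j=2}^{i+1}\b_j=\n(Q_2\prod_{j=2}^{i+1}P_j)$ and, combined with the inductive bound $\n(h_i)>\g_2+\sum_{j=2}^i\b_j$, the estimate that each of the four summands of $E_i$ strictly exceeds $\g_2+\sum_{j=2}^{i+1}\b_j$.

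The ``In particular'' assertion then follows by taking $r_i=2^{i-1}Q_2\prod_{j=2}^iP_j+h_i\in Poly(P_i,Q)$, which has value strictly larger than $\b_{i+1}$, so $P_{i+1}$ is the unique leading term of $x^{2^{i-1}}R_i$ and $\n(R_i)=\b_{i+1}-2^{i-1}\b_0$. The main obstacle is pure bookkeeping — simultaneously tracking membership in the right admissible-polynomial class and strict value inequalities across four distinct cross-terms of the square — but everything is ultimately driven by the single observation that $\sum_{j=2}^i\b_j$ grows just quickly enough to push $\b_{i+1}-\g_2$ below $\sum_{j=2}^i\b_j$.
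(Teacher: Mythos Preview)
Your proof is correct and follows essentially the same approach as the paper: induction on $i$, squaring the inductive decomposition of $x^{2^{i-1}}R_i$ and subtracting $x^{5\cdot 2^{i-1}}P_i$ to produce $P_{i+2}$, then checking membership in $Poly(P_{i+1},Q_{i+1})$ via Corollary~\ref{ExCor} for the quadratic cross-terms and the direct observation for $P_{i+1}h_i$, and finally verifying the value inequalities using the telescoping bound on $\b_{i+1}$. The only cosmetic difference is that you index the induction as $i\to i+1$ while the paper writes it as $i-1\to i$.
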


\begin{proof}
We use induction on $i$. If $i=1$ then, indeed, $xR_i=P_2+Q_2$ and $\n(Q_2)>\n(P_2)$. Assume that $i>1$ and the statement is true for $i-1$ then 
\begin{align*}
 x^{2^{i-1}}R_i &=x^{2^{i-1}}(R_{i-1}^2-x^{3\cdot 2^{i-2}}P_{i-1})=(P_i+2^{i-2}Q_2\prod_{j=2}^{i-1}P_j+h_{i-1})^2-x^{5\cdot 2^{i-2}}P_{i-1}\\
&=(P_i^2-x^{5\cdot 2^{i-2}}P_{i-1})+2^{i-1}Q_2\prod_{j=2}^iP_j+h_i=P_{i+1}+2^{i-1}Q_2\prod_{j=2}^iP_j+h_i,
\end{align*}
where $h_i=2^{2i-4}Q_2^2\prod_{j=2}^{i-1}P_j^2+2h_{i-1}P_i+2^{i-1}h_{i-1}Q_2\prod_{j=2}^{i-1}P_j+h_{i-1}^2$.

Observe that  $\n(Q_2\prod_{j=2}^iP_j)=\sum_{j=2}^i\b_j+4\frac{1}{3}>\sum_{j=2}^i\b_j+3\frac{1}{2}>\n(P_{i+1})$. Also, since 
\begin{align*}
\n(h_{i-1}^2)>\n(h_{i-1}Q_2\prod_{j=2}^{i-1}P_j)>\n(h_{i-1}P_i) &>\n(Q_2\prod_{j=2}^{i-1} P_jP_i)\\
\text{ and }\quad\quad \n(Q_2^2\prod_{j=2}^{i-1}P_j^2) &>\n(Q_2\prod_{j=2}^{i-1} P_jP_i)
\end{align*} 
we have $\n(h_i)>\n(Q_2\prod_{j=2}^i P_j)$. We notice that $h_{i-1}P_i\in Poly(P_i,Q_{i-1})$ since the product of every admissible monomial in the representation of $h_{i-1}$ and $P_i$ is an admissible monomial in $(P_i, Q_{i-1})$. Finally, $h_{i-1}^2,\, h_{i-1}Q_2\prod_{j=2}^{i-1}P_j,\, Q_2^2\prod_{j=2}^{i-1}P_j^2\in Poly(P_i,Q_i)$ by Corollary \ref{ExCor}. Thus $h_i\in Poly(P_i,Q_i)$.
\end{proof}

The next corollary is a restatement of Corollary \ref{order1}. To align current notation with notation of section \ref{preliminaries} we set $\b'_0=\b_0$, $\b'_1=\b_1$ and $\b'_i=\bar{\g}_i$ for all $i\ge 2$. Also set $\g'_i=\n(R_i)$ for all $i\ge 1$. Then $q'_1=2$ and $q'_i=3$ for all $i\ge 2$ and $s'_i=2$ for all $i\ge 1$. We notice that $x^{a_0}y^{a_1}\prod_{j=2}^n Q_j^{a_j}\prod_{j=1}^m R_j^{c_j}$ is an admissible monomial in $(Q,R)$ if and only if $(a_0,\dots,a_n,c_1,\dots,c_m)$ satisfies the conditions Corollary \ref{order1}. 

\begin{corollary}\label{neworder}
Suppose that $f,g\in Mon(Q,R)$. Let $f=x^{a_0}y^{a_1}\prod_{j=2}^n Q_j^{a_j}\prod_{j=1}^m R_j^{c_j}$ and $g=x^{b_0}y^{b_1}\prod_{j=2}^n Q_j^{b_j}\prod_{j=1}^m R_j^{d_j}$. If $(a_0,\dots,a_n,c_1,\dots,c_m)\neq(b_0,\dots, b_n,d_1,\dots,b_m)$ then $\n(f)\neq\n(g)$.
\end{corollary}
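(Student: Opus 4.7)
The plan is to reduce this directly to Corollary \ref{order1}, since the author has already signaled that this is a restatement. First I would use the fact that $\n$ is a valuation on $k(x,y,z)$ (Theorem \ref{main}) together with the prescribed values $\n(P_i)=\b_i$, $\n(Q_i)=\bar{\g}_i$, and $\n(R_i)=\g'_i$ to compute, for any admissible monomial $f=x^{a_0}y^{a_1}\prod_{j=2}^n Q_j^{a_j}\prod_{j=1}^m R_j^{c_j}$ in $Mon(Q,R)$, the value
\[
\n(f)=a_0\b_0+a_1\b_1+\sum_{j=2}^n a_j\bar{\g}_j+\sum_{j=1}^m c_j\n(R_j).
\]
Under the relabelings $\b'_0=\b_0$, $\b'_1=\b_1$, $\b'_j=\bar{\g}_j$ for $j\ge 2$, and $\g'_j=\n(R_j)$ set up just before the statement, this reads $\n(f)=\sum_{j=0}^n a_j\b'_j+\sum_{j=1}^m c_j\g'_j$, which is exactly the shape of the linear combination appearing in Corollary \ref{order1}.

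Next I would verify that the exponent bounds in the definition of admissible monomials match the bounds $q'_j$ and $s'_j$. From the definition of $Mon(Q,R)$ (namely $Mon(Q_n,R_m)$), we have $a_1\in\{0,1\}$, $a_j\in\{0,1,2\}$ for $j\ge 2$, and $c_j\in\{0,1\}$; using $q'_1=2$, $q'_j=3$ for $j\ge 2$, and $s'_j=2$, these become the hypotheses $a_j<q'_j$ for $j\ge 1$ and $c_j<s'_j$. With the hypotheses of Corollary \ref{order1} verified and the two tuples $(a_0,\dots,a_n,c_1,\dots,c_m)$ and $(b_0,\dots,b_n,d_1,\dots,d_m)$ assumed distinct, that corollary gives
\[
\sum_{j=0}^n a_j\b'_j+\sum_{j=1}^m c_j\g'_j\ne \sum_{j=0}^n b_j\b'_j+\sum_{j=1}^m d_j\g'_j,
\]
which is precisely $\n(f)\ne\n(g)$.

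The only routine wrinkle is that two monomials may be written with different formal values of $n$ and $m$; I would normalize by taking $n=\max(n_f,n_g)$ and $m=\max(m_f,m_g)$ and padding both tuples with zero exponents, after which the distinctness of the original exponent data is preserved and the calculation above applies verbatim. There is no real obstacle here: the substantive work (the uniqueness of representation in $\sum\b'_j\ZZ+\sum\g'_j\ZZ$ modulo the $q'_j$ and $s'_j$ bounds) was already done in Lemma \ref{group representation} and packaged into Corollary \ref{order1}; this corollary is essentially a dictionary translation between the admissible-monomial language used in the example and the abstract arithmetic statement of section \ref{preliminaries}.
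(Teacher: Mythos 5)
Your proposal is correct and is essentially identical to the paper's own justification: the author introduces the corollary as "a restatement of Corollary \ref{order1}," performs exactly the relabeling $\b'_0=\b_0$, $\b'_1=\b_1$, $\b'_j=\bar{\g}_j$ ($j\ge 2$), $\g'_j=\n(R_j)$, notes $q'_1=2$, $q'_j=3$ ($j\ge2$), $s'_j=2$, and observes that admissibility of a monomial is precisely the reduced-exponent hypothesis of Corollary \ref{order1}. Your additional remarks (multiplicativity of $\n$ to compute the value of a monomial, and padding with zero exponents to equalize $n$ and $m$) are routine and consistent with what the paper leaves implicit.
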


It now follows that for $f\in Poly(Q,R)$ to find $\n(f)$ it is enough to find the minimum of values of admissible monomials in the expansion of $f$. Our next goal is to show that if $f\in k[x,y,z]$ then $f\in Poly(Q,R)$ and to claim that $\n(f)$ belongs to the semigroup generated by $\{\n(x),\n(y)\}\cup\{\n(Q_i)\}_{i\ge 2}\cup\{\n(R_i)\}_{i\ge 1}$.

\begin{lemma}\label{ExL3}
Suppose that $f,g\in Poly(Q,R_m)$ and $h\in Poly(P_{m+1},Q)$. Then  $fg\in Poly(Q,R_{m+1})$ and $h\in Poly(Q,R_m)$.
\end{lemma}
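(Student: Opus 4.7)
The plan is to prove both statements simultaneously by an interlocked induction on $m\ge 0$. Let $(A_m)$ denote the second assertion, $Poly(P_{m+1},Q)\subset Poly(Q,R_m)$, and $(B_m)$ the first, $f,g\in Poly(Q,R_m)\Rightarrow fg\in Poly(Q,R_{m+1})$. The order of proof will be $(A_0),(A_1),(B_0),(B_1),(A_2),(B_2),(A_3),(B_3),\ldots$: the inductive step for $(A_m)$ with $m\ge 2$ will consume $(B_{m-1})$, while $(B_m)$ consumes $(A_1),\ldots,(A_m)$ together with $(B_{m-1})$.

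The base cases require only direct computation. $(A_0)$ is the equality $Poly(P_1,Q)=Poly(Q,R_0)$ from the definitions. $(A_1)$ uses $P_2=xR_1-Q_2$: a monomial $\lambda P_2^{a_2}v\in Mon(P_2,Q)$ is either already in $Mon(Q,R_0)$ (if $a_2=0$) or splits as $\lambda xR_1v-\lambda Q_2v$, whose first summand is in $Mon(Q,R_1)$ and whose second is handled by separating the $y$-factor and applying Lemma \ref{ExL2}. Then $(B_0)$ reduces to $Poly(P_1,Q)\cdot Poly(P_1,Q)\subset Poly(P_2,Q)\subset Poly(Q,R_1)$ via Corollary \ref{ExCor} and $(A_1)$. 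For $(B_1)$, the decomposition $f=f_0+f_1R_1,\ g=g_0+g_1R_1$ reduces the product to a sum of $f_ig_j$'s (which lie in $Poly(Q,R_1)$ by the previous argument) together with the identity $R_1^2=R_2+x^3y$; any resulting $y^2$'s are cleared via $y^2=P_2+x^3\in Poly(Q,R_1)$, again by $(A_1)$.

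For the inductive step of $(A_m)$ with $m\ge 2$, I would invoke Lemma \ref{PQ-expand} to write $P_{m+1}=x^{2^{m-1}}R_m-r_m$ with $r_m\in Poly(P_m,Q)$. For $u=\lambda P_{m+1}^{a_{m+1}}v\in Mon(P_{m+1},Q)$ the case $a_{m+1}=0$ follows from $(A_{m-1})$, while for $a_{m+1}=1$ the term $x^{2^{m-1}}R_mv$ lies in $Poly(Q,R_m)$ because $v\in Poly(Q,R_{m-1})$ by $(A_{m-1})$ and $R_m$ is a fresh variable, and the term $r_mv$ lies in $Poly(Q,R_m)$ by applying $(B_{m-1})$ to $r_m,v\in Poly(Q,R_{m-1})$. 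For $(B_m)$ I would decompose $f=\sum_{C\in\{0,1\}^m}f_CR^C$ and $g=\sum_Dg_DR^D$ with $f_C,g_D\in Poly(P_1,Q)$, compute $fg=\sum_{C,D}(f_Cg_D)R^{C+D}$ with $f_Cg_D\in Poly(Q,R_1)$ from Corollary \ref{ExCor} and $(A_1)$, and iteratively reduce $R^{C+D}$ using $R_i^2=R_{i+1}+x^{3\cdot 2^{i-1}}P_i$ (or $R_1^2=R_2+x^3y$) from the largest squared index downward; each substitution produces either a fresh $R_{i+1}$ or a $P_i$-by-product in $Poly(Q,R_{i-1})$ via $(A_{i-1})$, and the resulting mixed pieces are assembled by $(B_{m-1})$ to land in $Poly(Q,R_{m+1})$.

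The hardest part is controlling the cascade of substitutions inside $(B_m)$: multiplying $f_Cg_D\in Poly(Q,R_1)$ by $R^{C+D}$ can produce an $R_1^3$ via a collision of $R_1$-factors, and each $R_i^2$-reduction produces $P_i$-by-products that share $R$-indices with the surviving $R^E$, triggering further rewrites. The saving observation is that the largest $R$-index produced stays bounded by $m+1$ throughout the process, since every substitution raises the index by exactly one and we start from indices at most $m$, while each emergent $P_i$ has index at most $m$ and therefore lies in $Poly(Q,R_{m-1})$ by $(A_{i-1})$ and can be absorbed by $(B_{m-1})$. Making the termination argument rigorous, most likely via a lexicographic measure on the $R$-exponent profile together with a secondary measure controlling the $y$- and $Q_j$-degrees (the latter handled by Lemma \ref{ExL2}), is the technical core of the proof.
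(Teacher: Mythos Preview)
Your treatment of the second assertion $(A_m)$ matches the paper's: both write $P_{m+1}=x^{2^{m-1}}R_m-r_m$ from Lemma \ref{PQ-expand}, push $r_m\in Poly(P_m,Q)$ into $Poly(Q,R_{m-1})$ via $(A_{m-1})$, and finish with the product assertion at level $m-1$. Your base cases are also fine.

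The difference is in $(B_m)$. You expand $f$ and $g$ all the way down to $Poly(P_1,Q)$ coefficients indexed by $C,D\in\{0,1\}^m$ and then propose to reduce the resulting $R^{C+D}$ iteratively. This works for $m=0,1$ (where it coincides with the paper's argument), but for general $m$ it creates exactly the cascade problem you describe: each substitution $R_i^2=R_{i+1}+x^{3\cdot 2^{i-1}}P_i$ throws out a $P_i$ which, once expanded via $(A_{i-1})$, contributes $R_1,\ldots,R_{i-1}$ that can collide with the surviving $R^E$ to produce exponents up to $3$, and so on. You correctly flag the termination argument as the hard part and leave it open; that is a genuine gap.

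The paper avoids this entirely by peeling off only the top variable. Write $f=f'R_m^c$, $g=g'R_m^w$ with $f',g'\in Mon(Q,R_{m-1})$ and $c,w\in\{0,1\}$. By $(B_{m-1})$ the product $f'g'$ already lies in $Poly(Q,R_m)$, so each of its monomials is $e'R_m^a$ with $e'\in Mon(Q,R_{m-1})$ and $a\le 1$. One is then left with $e'R_m^{a+c+w}$ where $a+c+w\le 3$, and a single application of $R_m^2=R_{m+1}+x^{3\cdot 2^{m-1}}P_m$ suffices: the $R_{m+1}$ term is admissible, and the $P_m$ term is absorbed by $(A_{m-1})$ and one more call to $(B_{m-1})$. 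The case $a+c+w=3$ needs a second pass through the same argument at exponent $\le 2$. No cascade, no termination measure.

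So the missing idea is simply: do not unfold all of $R_1,\ldots,R_m$; strip off $R_m$ only and let the inductive hypothesis do the work below. With that change your proof goes through cleanly.
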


\begin{proof}
It is sufficient to prove the statement under the assumption that $f,g$ and $h$ are admissible monomials. We use induction on $m$. If $m=0$ then $f=y^bf'$ and $g=y^vg'$, where   $f',g'\in Mon(P_0,Q)$ and $b,v\in\{0,1\}$.  If $b+v\le 1$ then $fg=y^{b+v}(f'g')\in Poly(Q,R_0)$, since $f'g'\in Poly(P_0,Q)$ by Corollary \ref{ExCor}. If $b+v=2$ then 
$$
fg=(x^3+P_2)f'g'=(x^3-Q_2)f'g'+xf'g'R_1.
$$
By Corollary \ref{ExCor} we have $(x^3-Q_2)f'g', xf'g'\in Poly(P_0,Q)$ and therefore $xf'g'R_1,fg\in Poly(Q,R_1)$. Also the statement for $h$ holds since $Poly(P_1,Q)=Poly(Q,R_0)$.

Assume that $m\ge 1$. Then $f=f'R_m^c$ and $g=g'R_m^w$, where $f',g'\in Mon(Q,R_{m-1})$ and $c,w\in\{0,1\}$. 
By the inductive hypothesis  we have $f'g'\in Poly(Q,R_m)$.  Let $e=e'R_m^a$, where $e'\in Mon(Q,R_{m-1})$ and $a\in\{0,1\}$, be one of the admissible monomials in the expansion of $f'g'$. 

If $a+c+w\le 1$ then $eR_m^{c+w}\in Mon(Q,R_m)$. If $a+c+w=2$ then 
$$
eR_m^{c+w}=e'(x^{3\cdot 2^{m-1}}P_m+R_{m+1})=x^{3\cdot 2^{m-1}}e'P_m+e'R_{m+1}.
$$
We have  $P_m\in Poly(Q,R_{m-1})$ by the inductive hypothesis, $x^{3\cdot 2^{m-1}}e'\in Mon(Q,R_{m-1})$ and $e'R_{m+1}\in Mon(Q,R_{m+1})$. 
Applying the inductive hypothesis to the product of $P_m$ and $x^{3\cdot 2^{m-1}}e'$ we get $x^{3\cdot 2^{m-1}}e'P_m\in Poly(Q,R_m)$. Thus $eR_m^{c+w}\in Poly(Q,R_{m+1})$.

If $a+c+w=3$ then 
$$
eR_m^{c+w}=e'R_m(x^{3\cdot 2^{m-1}}P_m+R_{m+1})=x^{3\cdot 2^{m-1}}e'P_mR_m+e'R_mR_{m+1}
$$
We have $x^{3\cdot 2^{m-1}}e'P_m\in Poly(Q,R_m)$ by the above argument, and $e'R_mR_{m+1}\in Mon(Q,R_{m+1})$. 
Let $d=d'R_m^b$, where $d'\in Mon(Q,R_{m-1})$ and $b\in\{0,1\}$, be one of the admissible monomials in the expansion of $x^{3\cdot 2^{m-1}}e'P_m$. Then the product $dR_m=d'R_m^bR_m$ satisfies the condition that 
$b+1\le 2$ and therefore, $dR_m\in Poly(Q,R_{m+1})$ as shown above.  This shows that $x^{3\cdot 2^{m-1}}e'P_mR_m\in Poly(Q,R_{m+1})$ and therefore, $eR_m^{c+w}\in Poly(Q,R_{m+1})$. Thus $fg\in Poly(Q,R_{m+1})$.

We now show that $h\in Poly(Q,R_m)$. If $h\in Mon(P_m,Q)$ then $h\in Poly(Q,R_{m-1})$ by the inductive hypothesis. Assume that $h\notin  Mon(P_m,Q)$, then $h=h'P_{m+1}$, where $h'\in Mon(P_m,Q)$. Applying Lemma \ref{ExL3} and the inductive hypothesis we get 
$$
h=x^{2^{m-1}}h'R_m-h'r_m,
$$
where $h',r_m\in Poly(Q,R_{m-1})$ and $x^{2^{m-1}}h'R_m\in Poly(Q,R_m)$. Finally,by the inductive hypothesis $h'r_m\in Poly(Q,R_m)$ and, therefore,  $h\in Poly(Q,R_m)$.
\end{proof}

The following statement follows at once from Lemma \ref{ExL3}
\begin{corollary}\label{product} If $f_1,f_2,\dots f_k\in Poly(Q,R)$ then $f_1f_2\cdots f_k\in Poly(Q,R)$.
\end{corollary}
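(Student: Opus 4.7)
The plan is to prove this by induction on $k$, with the base case $k=2$ essentially being a repackaging of the first half of Lemma \ref{ExL3}. The only wrinkle beyond Lemma \ref{ExL3} is that the two factors need not start out living in the same $Poly(Q, R_m)$, so the first task is to observe that the sets $Poly(Q, R_m)$ are nested in $m$. Specifically, any admissible monomial $\l x^a y^b \prod_{j=2}^n Q_j^{b_j} \prod_{j=1}^m R_j^{c_j}$ in $(Q, R_m)$ is also an admissible monomial in $(Q, R_{m'})$ for every $m' \ge m$, simply by taking the exponents of $R_{m+1}, \dots, R_{m'}$ to be zero (all constraints $c_j \in \{0,1\}$ are preserved). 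Hence $Poly(Q, R_m) \subset Poly(Q, R_{m'})$ whenever $m \le m'$.

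With this monotonicity in hand, I would do the base case $k=2$ as follows: given $f_1, f_2 \in Poly(Q, R)$, choose $m_1, m_2 \in \ZZ_{\ge 0}$ with $f_i \in Poly(Q, R_{m_i})$, set $m = \max(m_1, m_2)$, and observe that by the monotonicity above both $f_1, f_2 \in Poly(Q, R_m)$. The first assertion of Lemma \ref{ExL3} then gives $f_1 f_2 \in Poly(Q, R_{m+1}) \subset Poly(Q, R)$.

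For the inductive step, assume the result for products of at most $k-1$ factors. Given $f_1, \dots, f_k \in Poly(Q, R)$, set $g = f_1 \cdots f_{k-1}$, which lies in $Poly(Q, R)$ by the inductive hypothesis. Applying the base case to $g$ and $f_k$ yields $f_1 f_2 \cdots f_k = g \cdot f_k \in Poly(Q, R)$, completing the induction.

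There is essentially no obstacle here, since all the real combinatorial work has already been done in Lemma \ref{ExL3} (and the nontrivial parts of the product structure, such as the rewriting $R_m^2 = x^{3 \cdot 2^{m-1}} P_m + R_{m+1}$ that forces the index $m$ to increase). The only thing one must be careful about is not to conflate $Poly(Q, R_m)$ for different $m$, which is handled by the monotonicity remark above.
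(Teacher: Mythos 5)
Your proof is correct and is exactly the argument the paper intends: the paper states the corollary "follows at once from Lemma \ref{ExL3}", and your write-up simply makes explicit the two routine ingredients (monotonicity of $Poly(Q,R_m)$ in $m$ and induction on $k$) needed to deduce it. No issues.
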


We can now completely describe the value semigroup of $\n$.
\begin{theorem} $\n(k[x,y,z]_{(x,y,z)})$ is a semigroup generated by $\{1,1\frac{1}{2},2\frac{1}{4},4\frac{5}{8},\dots\}\cup\{4\frac{1}{3},13\frac{1}{9},39\frac{10}{29},\dots\}$.
\end{theorem}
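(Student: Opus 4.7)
The plan is to combine the multiplicative closure of $Poly(Q,R)$ with the order-uniqueness of admissible monomials to read off $\nu(f)$ from any admissible-monomial expansion of $f$, thereby exhibiting $\nu(f)$ as a nonnegative integer combination of the listed generators. Let $S$ denote the semigroup generated by $\beta_0$, $\beta_1$, $\{\bar\gamma_i\}_{i\ge 2}$ and $\{\nu(R_i)\}_{i\ge 1}$; the goal is $\nu(k[x,y,z]_{(x,y,z)})=S$.

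First I would observe that $x,y,z\in Mon(Q,R)$: the monomials $x$ and $y$ are admissible by definition, and $z=R_1$ by construction, so that $z$ is admissible with the $R_1$-exponent equal to $1$ and all other exponents zero. By Corollary \ref{product}, $Poly(Q,R)$ is closed under multiplication, so every monomial $x^ay^bz^c$ lies in $Poly(Q,R)$; by $k$-linearity, every nonzero $f\in k[x,y,z]$ admits an expansion $f=\sum_j h_j$ with each $h_j\in Mon(Q,R)$, which after collecting like terms has no two summands sharing the same exponent tuple. Corollary \ref{neworder} then forces the values $\nu(h_j)$ to be pairwise distinct, so the ultrametric property gives $\nu(f)=\min_j\nu(h_j)$. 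Since a nonzero admissible monomial $\lambda x^ay^b\prod_{j\ge 2}Q_j^{b_j}\prod_{i\ge 1}R_i^{c_i}$ has $\nu$-value $a\beta_0+b\beta_1+\sum_{j\ge 2}b_j\bar\gamma_j+\sum_{i\ge 1}c_i\nu(R_i)\in S$, this proves $\nu(k[x,y,z]\setminus\{0\})\subseteq S$; the reverse inclusion is immediate, each generator being the $\nu$-value of a polynomial. Finally, any nonzero element of $k[x,y,z]_{(x,y,z)}$ has the form $f/g$ with $g\notin(x,y,z)k[x,y,z]$, so Lemma \ref{dominate} gives $\nu(g)=0$, and hence $\nu(f/g)=\nu(f)\in S$.

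The conceptual content is already absorbed by the supporting lemmas: Lemma \ref{ExL3} and Corollary \ref{product} supply the multiplicative closure needed to reduce to admissible-monomial expansions, while Corollary \ref{neworder} supplies the uniqueness statement that prevents cancellation at the minimum-value summand. Given those, the theorem reduces to unwinding definitions; the only subtle identification is $z=R_1$ rather than the forbidden $Q_1$, which is precisely what brings the $R_i$'s, rather than $\bar\gamma_1$ alone, into the generating set of $S$.
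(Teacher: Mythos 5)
Your proof is correct and follows essentially the same route as the paper: factor each monomial $x^ay^bz^c$ into admissible factors (with $z$ entering as $R_1$), invoke Corollary \ref{product} to place $f$ in $Poly(Q,R)$, and use Corollary \ref{neworder} to conclude that $\nu(f)$ is the minimum of the values of the admissible monomials in its expansion, hence lies in $S$. The extra details you supply (collecting like terms, the reverse inclusion, and the reduction from the localization to $k[x,y,z]$ via Lemma \ref{dominate}) are exactly the steps the paper leaves implicit.
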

\begin{proof}
Let $S$ be the semigroup generated by the set in the statement of the theorem. It suffices to show that $\n(f)\in S$ for every $f\in k[x,y,z]$. We write $f=f_1+\dots+f_l$, where for all $1\le i\le l$ we have $f_i=\l_ix^{a_i}y^{b_i}z^{c_i}$ with $\l_i\in k$ and $a_i,b_i,c_i\in \ZZ_{\ge 0}$. Since $f_i=(\l_ix^{a_i}y)(y)\dots(y)(R_1)\dots(R_1)$ by Corollary \ref{product} we have $f_i\in Poly(Q,R)$. Therefore, $f\in Poly(Q,R)$. Thus, by Corollary \ref{neworder} $\n(f)\in S$.
\end{proof}
\end{example}


\section{More examples of semigroups of valuations centered in a 3-dimensional regular local ring}\label{examples}

In this section we construct two more examples of valuations centered in $k[x,y,z]_{(x,y,z)}$. We use  defining polynomials to construct a valuation on $k(x,y,z)$ and then consider a sequence of jumping polynomials to understand its value semigroup. In the first example the set $\{\bar{r}_{i,0}|\bar{r}_{i,0}>0\}$ is empty and generators of the value semigroup are the values of defining polynomials. In the second example the set $\{\bar{r}_{i,0}|\bar{r}_{i,0}>0\}$ has two elements. We observe that already in the case of just two $\bar{r}_{i,0}$ greater than zero the pattern for the sequence of generators of the value semigroup becomes quite complicated. 

\begin{example}\label{Example1}
Let $\{\b_i\}_{i\ge 0}$ and $\{\b'_i\}_{i\ge 0}$ be sequences of positive rational numbers such that $\b_0=\b'_0$. Using notation of section \ref{properties1} for all $i\ge 0$ we set
\begin{align*}
S_i=\sum_{j=0}^{i}\b_j\ZZ_{\ge 0},\quad & G_i=\sum_{j=0}^{i}\b_j\ZZ, \;& q_i=\min\{q\in\ZZ_{>0}|q\b_i\in G_{i-1}\}\\
S'_i=\sum_{j=0}^{i}\b'_j\ZZ_{\ge 0},\quad  & G'_i=\sum_{j=0}^{i}\b'_j\ZZ,  & q'_i=\min\{q\in\ZZ_{>0}|q\b'_i\in G'_{i-1}\}
\end{align*} 
We assume that $\gcd(q_i,q'_j)=1$ for all $i,j>0$ and require $\b_{i+1}>q_i\b_i$ and $\b'_{i+1}>q'_i\b'_i$ for all $i>0$.  We also assume that infinitely many $q_i$ and $q'_i$ are greater than 1.

To construct a defining sequence of polynomials, for all $i>0$ we set $\bar{\g}_i=\b'_i$ and fix $\l_i,\bar{\m}_i\in k\setminus\{0\}$. Then for all $i>0$ we have
$$\bar{H}_i+G=(\prod_{j=1}^{\infty}\frac{1}{q_j}\prod_{j=1}^i\frac{1}{q'_j})\b_0\ZZ\quad {\text{ and }}\quad \bar{H}_i+G_0=(\prod_{j=1}^i\frac{1}{q'_j})\b_0\ZZ.$$
 Thus $\bar{s}_i=q'_i$ and $\bar{m}_i=0$ for all $i>0$. By Corollary \ref{positivity1} applied to $\bar{s}_i\bar{\g}_i=q'_i\b'_i$, an element of  $(G_0+\bar{H}_{i-1})=G'_{i-1}$, we get $a_i>0$  and $\bar{n}_{i,0}=a_i$, $\bar{r}_{i,0}=0$ for all $i>0$. We notice that $\bar{s}_i\bar{\g}_i\in(S_0+\bar{U}_{i-1})$  for all $i>0$. Also,  the inequality $\bar{\g}_{i+1}>\bar{r}_{i,0}\b_0+\bar{s}_i\bar{\g}_i$ holds for all $i>0$. Therefore, the set of polynomials $\{P_i\}_{i\ge 0}\cup\{Q_i\}_{i>0}$ as constructed in section \ref{numerical data} defines a valuation $\n$ on $k(x,y,z)$. 

We claim that in this case the sequence of jumping polynomials  $\{P_i\}_{i\ge 0}\cup\{T_i\}_{i>0}$ for $\n$ as defined in section \ref{construction} coinsides with the sequence of defining polynomials $\{P_i\}_{i\ge 0}\cup\{Q_i\}_{i>0}$. We use induction on $i$ to show that $T_i=Q_i$,  $\d_{i-1}=1$, $m_{i-1}=0$ and $\T_i=\P\cup\{Q_j^{q'_j}\}_{j=1}^{i-1}$ for all $i>0$. If $i=1$ the statetemnt holds.

Assume that  $i>0$ and $T_j=Q_j$, $\d_{j-1}=1$, $m_{j-1}=0$ and $\T_j=\P\cup\{Q_j^{q'_e}\}_{e=1}^{j-1}$ for all $0<j\le i$.  Then $\g_j=\bar{\g}_j$ for all $0<j\le i$ and $H_{i-1}=\bar{H}_{i-1}$, $U_{i-1}=\bar{U}_{i-1}$. Thus we have $s_i=\bar{s}_i$, $m_i=\bar{m}_i=0$ and $s_i\g_i\in(S_0+U_{i-1})$. This implies $\d_i=1$, $\D_i=\{(0,0,\dots,0,1)\}$=$\{\bar{d}\}$ and $\T_{i+1}=\T_i\cup\{Q_i^{q'_i}\}=\P\cup\{Q_j^{q'_j}\}_{j=1}^i$. Also, the only immediate successor of $T_i$ is $T_{i+1}$ since $\sum_{j=0}^{i}\d_j=i+1$. 

Finally, to show that $T_{i+1}=Q_{i+1}$ we notice that if $\bar{n}_{i,0},\bar{l}_{i,1},\dots,\bar{l}_{i,i-1}$ are as defined in the construction of $Q_{i+1}$ then $s_i\g_i=\bar{n}_{i,0}\b_0+\sum_{j=1}^{i-1}\bar{l}_{i,j}\g_j$ and $P_0^{\bar{n}_{i,0}}\prod_{j=1}^{i-1}T_j^{\bar{l}_{i,j}}$ is irreducible with respect to $\T_i$ since $\bar{l}_{i,j}<q'_j$ for all $j\le i-1$. Then by uniqueness of such a representation (Proposition \ref{uniqueness}) we have $n_{\bar{d},0}=\bar{n}_{i,0}$ and $l_{\bar{d},j}=\bar{l}_{i,j}$ for all $j\le i-1$. Then $T_i^{s_i}/(P_0^{n_{\bar{d},0}}\prod_{j=1}^{i-1}T_j^{l_{\bar{d},j}})=Q_i^{\bar{s}_i}/(P_0^{\bar{n}_{i,0}}\prod_{j=1}^{i-1}Q_j^{\bar{l}_{i,j}})$ and therefore, $\m_{\bar{d}}=\bar{\m}_i$. Thus
$$T_{i+1}=T_i^{s_i}-\m_{\bar{d}}P_0^{n_{\bar{d},0}}\prod_{j=1}^{i-1}T_j^{l_{\bar{d},j}}=Q_i^{\bar{s}_i}-\bar{\m}_iP_0^{\bar{n}_{i,0}}\prod_{j=1}^{i-1}Q_j^{\bar{l}_{i,j}}=Q_{i+1}.$$

In this example  $\{\b_i\}_{i\ge 0}\cup\{\b'_i\}_{i>0}$ is a set of generators for the value semigroup $\n(k[x,y,z]_{(x,y,z)})$.  It is also a minimal set of generators if $q_i> 1$ and $q'_i>1$ for all $i>0$. 
\end{example}

An example of this kind with $q_i=2$ and $q'_i=3$ for all $i>0$ has been considered in \cite{C-D-K}. Also, the sequence $\{\b_i\}_{i>0}\cup\{\bar{\g}_i\}_{i>0}$ of example \ref{Example1} satisfies the positivity condition of \cite{Mogh2}.

\begin{example}\label{Example2}
Let $\b_0=1$, $\b_1=1\frac{1}{2}$, $\b_2=3\frac{1}{4}$, $\b_3=6\frac{5}{8},\dots$ and $\bar{\g}_1=2\frac{1}{4}$, $\bar{\g}_2=3\frac{5}{8}$, $\bar{\g}_3=7\frac{1}{3}$,  $\bar{\g}_4=22\frac{1}{9},\dots$, where $\b_i=2\b_{i-1}+\frac{1}{2^i}$ and $\bar{\g}_{i+1}=3\bar{\g}_i+\frac{1}{3^{i-1}}$ for all $i>2$. Then for all $i>0$ in notation of section \ref{numerical data} we have 
$$
G_0=\ZZ,\quad G_i=\frac{1}{2^i}\ZZ\quad\quad\text{and}\quad \quad\bar{H}_1=\frac{9}{4}\ZZ, \quad\bar{H}_2=\frac{1}{8}\ZZ, \quad\bar{H}_{i+2}=\frac{1}{8\cdot3^i}\ZZ
$$
So $q_i=2$ for all $i>0$ and $\bar{s}_1=1$, $\bar{m}_1=2$, $\bar{s}_2=1$,  $\bar{m}_2=3$  and $\bar{s}_i=3$, $\bar{m}_i=3$ for all $i>2$. Notice that 
$\bar{\g}_1=-\b_0+\b_2$ and $\bar{\g}_2=-3\b_0+\b_3$, so that $\bar{r}_{1,0}=1$ and $\bar{r}_{2,0}=3$.  Also,
$2\b_1=3\b_0$ and $2\b_i=5\cdot2^{i-2}\b_0+\b_{i-1}$ for all $i>1$, and $3\bar{\g}_3=22\b_0$ and $3\bar{\g}_i=59
 \cdot3^{i-4}\b_0+\bar{\g}_{i-1}$ for all $i>3$. In particular, this shows $\bar{r}_{i,0}=0$  for all $i\ge 3$.  

Let $\l_i=\bar{\m}_i=1$ for all $i>0$. Since the inequalities $\b_{i+1}>q_i\b_i$ and $\bar{\g}_{i+1}>\bar{r}_{i,0}\b_0+\bar{s}_i\bar{\g}_i$ are satisfied for all $i>0$ the set of polynomials $\{P_i\}_{i\ge 0}\cup\{Q_i\}_{i>0}$ as constructed in section \ref{numerical data} defines a valuation $\n$ on $k(x,y,z)$.

We have $P_0=x$, $P_1=y$, $P_2=y^2-x^3$, $P_3=P_2^2-x^5y$ and  $Q_1=z$, $Q_2=xz-P_2$, $Q_3=x^3Q_2-P_3$, $Q_4=Q_3^3-x^{22 }$. The recursive formulas for $P_i$ and $Q_{i+1}$ when $i>3$ are
\begin{align*}
P_{i} &=P_{i-1}^2-x^{5\cdot 2^{i-3}}P_{i-2}\\
Q_{i+1}& =Q_i^3-x^{59\cdot 3^{i-4}}Q_{i-1}.
\end{align*}

We will directly compute several first members of the subsequence $\{T_i\}_{i>0}$ of jumping polynomials and identify the value semigroup generators with values less than 9. We use $(M,N)$-expansions to find the required values and residues. 

Since $T_1=z$ we get $\g_1=2\frac{1}{4}$, $s_1=1$, $m_1=2$, $\D_1=\{(1,0,0,1),(0,0,0,2),(0,0,1,1)\}$ and $\d_1=3$. Since   $xT_1=P_2+Q_2$ we have 
$$ T_1^2=x^3P_1+x^{-2}(P_3+2P_2Q_2+Q_2^2),\quad P_2T_1=x^4P_1+x^{-1}(P_3+P_2Q_2).$$ 
Thus the immediate successors of $T_1$ are
$$
\begin{array}{ll}
T_2 =xT_1-P_2=Q_2,\quad & \g_2=3\frac{5}{8}\\
T_3 =T_1^2-x^3P_1=x^{-2}(P_3+2P_2Q_2+Q_2^2),\quad & \g_3=4\frac{5}{8}\\
T_4 =P_2T_1-x^4P_1=x^{-1}(P_3+P_2Q_2),\quad &  \g_4=5\frac{5}{8}\\
\end{array}
$$

Consider $T_2$ and $\g_2=3\frac{5}{8}$. We have $s_2=1$, $m_2=3$, $r_{2,0}=3$, $\D_2=\{(3,0,0,0,0,1),\\
(0,0,0,0,0,2),(2,0,1,0,0,1),(0,0,0,1,0,1)\}$ and $\d_2=4$. Since  $x^3T_2=P_3+Q_3$ we get
$$T_2^2=x^4P_2+x^{-6}(P_4+2P_3Q_3+Q_3^2),\quad P_3T_2=x^7P_2+x^{-3}(P_4+P_3Q_3).$$
Also 
$$x^2P_2T_2=x^{-1}(P_2P_3+P_2Q_3) \text{ and } P_3T_1=x^{-1}(P_2P_3+P_3Q_2).$$
Thus the immediate successors of $T_2$ are
$$
\begin{array}{ll}
T_5 =x^3T_2-P_3=Q_3,\quad & \g_5=7\frac{1}{3}\\
T_6 =T_2^2-x^4P_2=x^{-6}(P_4+2P_3Q_3+Q_3^2),\quad & \g_6=7\frac{5}{16}\\
T_7 =x^2P_2T_2-P_3T_1=x^{-1}(-P_3Q_2+P_2Q_3),\quad &  \g_7=9\frac{1}{4}\\
T_8 =P_3T_2-x^7P_2=x^{-3}(P_4+P_3Q_3),\quad &  \g_8=10\frac{5}{16}\\
\end{array}
$$
It is helpful to notice that since $P_3=x^3T_2-T_5$ we can also write $T_8=x^3T_6-T_2T_5$.  From further computations it will follow that $x^3T_6$ and $T_2T_5$ are both irreducible with respect to $\T$, and therefore, $T_8$ is redundant by Lemma \ref{redundant_suf}. Also, the only successors of $T_8$ are $T_{k_1}=-T_2T_5$ and $T_{k_2}=0$, where $k_1=\sum_{j=0}^{8}\d_j$ and $k_2=\sum_{j=0}^{k_1}\d_j$. A similar argument applies to $T_7$. Since $P_2=xT_1-T_2$ and $T_2^2=x^4P_2+T_6$ we write $T_7=-x^6P_2-x^2T_6+T_1T_5$ to see that $T_7$ is redundant.

Consider $T_3$ and $\g_3=4\frac{5}{8}$. We have $s_3=1$, $m_3=3$, $\D_3=\{(0,0,0,0,0,0,1))\}$ and $\d_3=1$. Since  $x^2T_3=P_3+2P_2Q_2+Q_2^2$  and $x^3T_2=P_3+Q_3$ the only immediate successor of $T_3$ is
$$ T_9=T_3-xT_2=x^{-2}(2P_2Q_2+Q_2^2-Q_3), \quad\quad  \g_9=4\frac{7}{8}$$

Continuing in this manner we find  
$$
\begin{array}{ll}
 T_{10}=T_4-x^2T_2=x^{-1}(P_2Q_2-Q_3), \quad & \g_{10}=5\frac{7}{8}\\
  & \\
 T_{11}=T_5^3-x^{22}=Q_4, \quad &   \g_{11}=22\frac{1}{9}\\
  & \\
 T_{12}=x^6T_6-P_4=2P_3T_5+T_5^2, \quad & \g_{12}=13\frac{23}{24}\\
 T_{13}=T_6^2-x^8P_3=x^{-12}(P_5+hvt), \quad & \g_{13}=14\frac{21}{32}\\
 T_{14}=x^5P_2T_6-P_4T_1=-x^2P_3T_6+2P_3T_1T_5-x^2T_5T_6+T_1T_5^2, \quad & \g_{14}=15\frac{15}{16}\\
 T_{15}=x^3P_3T_6-P_4T_2=2x^7P_2T_5+x^3T_5T_6-T_2T_5^2, \quad & \g_{15}=17\frac{7}{12}\\
 T_{16}=P_4T_6-x^{14}P_3=x^6T_{13}-2P_3T_5T_6+T_5^2T_6, \quad & \g_{16}=13\frac{23}{24}\\
  & \\
  T_{17}=T_7+x^6P_2=-x^2T_6+T_1T_5, \quad &  \g_{17}=9\frac{5}{16}\\
   & \\
  T_{18}=T_8-x^3T_6=-T_2T_5, \quad &  \g_{18}=10\frac{23}{24}\\
 \end{array}
 $$
In particular, we see that $T_{12},T_{14},\dots, T_{18}$ are redundant and $T_{11}, T_{13}$ are of value greater than 9. It follows that the value semigroup generators of value less than 9 may only appear among values of the successors of $T_9$ and $T_{10}$. Moreover, only jumping polynomials of value less than 9 may be used to construct such successors of $T_9$ and $T_{10}$. From now on we will only keep track of successors of $T_9$ and $T_{10}$ with values less than 9.

Immediate successors of $T_9$ of value less than 9 may only have initial terms $xT_9$, $T_1T_9$, $P_2T_9$ or $T_2T_9$, since all other immediate successors of $T_9$ will have initial terms of value greater than 9.  $T_{10}$ has only one immediate successor  as $\g_{10}$ is a dependent value.
$$
\begin{array}{ll}
T_{9:1}=xT_9-2T_1T_2=-x^{-1}(Q_2^2+Q_3), \quad &  \g_{9:1}=6\frac{1}{4}\\
T_{9:2}=T_1T_9-2x^2P_1T_2=x^{-3}(2P_3Q_2+3P_2Q_2^2-P_2Q_3+Q_2^3-Q_2Q_3),  \quad &  \g_{9:2}=7\frac{1}{4}\\
T_{9:3}=P_2T_9-2P_1P_3=x^{-2}(2P_3Q_2+P_2Q_2^2-P_2Q_3+2x^2P_1Q_3),  \quad &  \g_{9:3}=8\frac{1}{4}\\
T_{9:4}=T_2T_9-2x^7P_1=x^{-2}(2P_2T_6+2x^4P_3+Q_2^3-Q_2Q_3),  \quad &  \g_{9:4}=8\frac{9}{16}\\
 & \\
 T_{10:1}=T_{10}-T_1T_2=-x^{-1}(Q_2^2+Q_3), \quad &  \g_{10:1}=6\frac{1}{4}\\
\end{array}
$$

Since $\g_{9:4}$ is an independent value all immediate successors of $T_{9:4}$ will have initial terms of value greater than 9. The immediate successors of $T_{9:1}, T_{9:2}, T_{9:3}, T_{10:1}$ are
$$
\begin{array}{ll}
T_{9:1:1}=T_{9:1}+x^3P_2=-x^{-1}(T_6+Q_3), \quad &  \g_{9:1:1}=6\frac{5}{16}\\
& \\
T_{9:2:1}=T_{9:2}-2x^4P_2=2T_6+x^{-3}(3P_2Q_2^2-P_2Q_3+Q_2^3-3Q_2Q_3),  \quad &  \g_{9:2:1}=7\frac{5}{16}\\
& \\
T_{9:3:1}=T_{9:3}-2x^5P_2=2xT_6+x^{-2}(P_2Q_2^2-P_2Q_3+2x^2P_1Q_3-2Q_2Q_3), &  \g_{9:3:1}=8\frac{5}{16}\\
& \\
T_{10:1:1}=T_{10:1}+x^3P_2=T_{9:1:1}, \quad  \text{ (redundant)}  & \g_{10:1:1}=6\frac{5}{16}\\
\end{array}
$$

Immediate successors of $T_{9:1:1}$ of value less than 9 may only have initial terms $xT_{9:1:1}$ or $T_1T_{9:1:1}$, while $T_{9:2:1}, T_{9:3:1}$ and $T_{10:1:1}$ each have one immediate successor.
$$
\begin{array}{ll}
T_{9:1:1:1}=xT_{9:1:1}+T_6=-T_5, \quad  \text{ (redundant) } & \g_{9:1:1:1}=7\frac{1}{3}\\
&\\
T_{9:1:1:2}=T_1T_{9:1:1}+\frac{1}{2}T_{9:4}=x^{-2}(-P_2Q_3+x^4P_3+x^4P_2Q_2-\frac{1}{2}Q_2^3-\frac{3}{2}Q_2Q_3),  &    \g_{9:1:1:2}=8\frac{7}{12}\\
& \\
T_{9:2:1:1}=T_{9:2:1}-2T_6=x^{-3}(3P_2Q_2^2-P_2Q_3+Q_2^3-3Q_2Q_3),  \, &  \g_{9:2:1:1}=7\frac{1}{2}\\
& \\
T_{9:3:1:1}=T_{9:3:1}-2xT_6=x^{-2}(P_2Q_2^2-P_2Q_3+2x^2P_1Q_3-2Q_2Q_3),  \, &  \g_{9:3:1:1}=8\frac{1}{2}\\
&\\
T_{10:1:1:1}=T_{10:1:1}-T_{9:1:1}=0 \quad & \\
\end{array}
$$

The only immediate successor of $T_{9:1:1:1}$ is 0. All immediate successors of $T_{9:1:1:2}$ are of value greater than 9. The immediate successors of $T_{9:2:1:1}$ and $T_{9:3:1:1}$  are
$$
\begin{array}{ll}
T_{9:2:1:1:1}=T_{9:2:1:1}-3x^6P_1=x^{-3}(3P_2T_6-P_2Q_3+3x^4P_3+Q_2^3-3Q_2Q_3),  &  \g_{9:2:1:1:1}=7\frac{9}{16}\\
& \\
T_{9:3:1:1:1}=T_{9:3:1:1}-x^7P_1=x^{-2}(P_2T_6-P_2Q_3+x^4P_3+2x^2P_1Q_3-2Q_2Q_3),  &  \g_{9:3:1:1:1}=8\frac{9}{16}\\
\end{array}
$$

The only immediate successors of $T_{9:2:1:1:1}$ of value less than 9 has initial term $xT_{9:2:1:1:1}$, while $T_{9:3:1:1:1}$  has only one immediate successor.
$$
\begin{array}{ll}

T_{9:2:1:1:1:1}=xT_{9:2:1:1:1}-\frac{3}{2}T_{9:4}=x^{-2}(-P_2Q_3-\frac{1}{2}Q_2^3-\frac{3}{2}Q_2Q_3),  &  \g_{9:2:1:1:1:1}=8\frac{7}{12}\\
& \\
T_{9:3:1:1:1:1}=T_{9:3:1:1:1}-\frac{1}{2}T_{9:4}=x^{-2}(-P_2Q_3+2x^2P_1Q_3-\frac{1}{2}Q_2^3-\frac{3}{2}Q_2Q_3), &  \g_{9:3:1:1:1:1}=8\frac{7}{12}\\
\end{array}
$$

Finally, the immediate successors of $T_{9:2:1:1:1:1}$ and $T_{9:3:1:1:1:1}$ are both redundant polynomials:
\begin{align*}
T_{9:2:1:1:1:1:1} =T_{9:2:1:1:1:1}-T_{9:1:1:2} &=-x^2P_3-x^2P_2Q_2\\
&=-x^2P_3-P_3T_1+x^6P_2+x^2T_6-T_1T_5; \\
T_{9:3:1:1:1:1:1} =T_{9:3:1:1:1:1}-T_{9:1:1:2} &=-x^2P_3+2P_1Q_3-x^2P_2Q_2\\
&=-x^2P_3+2P_1T_5-P_3T_1+x^6P_2+x^2T_6-T_1T_5. \\
\end{align*}
Thus the value semigroup generators of value less than 9 are $1, 1\frac{1}{2},  2\frac{1}{4},  3\frac{5}{8},  4\frac{7}{8}, 6\frac{5}{16}, 7\frac{1}{3}, 7\frac{9}{16}, 8\frac{7}{12}$.
\end{example}

\end{document}